\documentclass[12pt]{article}

\usepackage[english]{babel}
\usepackage[utf8]{inputenc} 
\usepackage[T1]{fontenc}
\usepackage[francais]{layout}
\usepackage[babel=true]{csquotes}

\usepackage{vmargin}
\setpapersize{A4}
%		1	2	3	4	5	6	7	8
\setmarginsrb{	15mm}{	15mm}{	15mm}{	15mm}{	0mm}{	5mm}{	5mm}{	5mm}
%1 est la marge gauche
%2 est la marge en haut
%3 est la marge droite
%4 est la marge en bas
%5 fixe la hauteur de l'en-tête
%6 fixe la distance entre l'en-tête et le texte
%7 fixe la hauteur du pied de page
%8 fixe la distance entre le texte et le pied de page

\usepackage{amsmath}
\usepackage{amssymb}
\usepackage{MnSymbol}   %flèche à deux têtes
\usepackage{stmaryrd}   %segments d'entiers
\usepackage{amsthm}

\theoremstyle{definition}
\newtheorem{thm}                        {Theorem}
\newtheorem*{thmA}                    {Theorem A}
\newtheorem*{corB}                  {Corollary B}
\newtheorem*{corC}                  {Corollary C}
\newtheorem*{thmD}                    {Theorem D}
\newtheorem*{corE}                  {Corollary E}
\newtheorem*{prpF}                {Proposition F}
\newtheorem{cor}   [thm]              {Corollary}
\newtheorem{defi}  [thm]             {Definition}
\newtheorem{defis} [thm]            {Definitions}
\newtheorem{dfpp}  [thm] {Definition-Proposition}

\newtheorem{lem}   [thm]                  {Lemma}

\newtheorem{notas} [thm]              {Notations}
\newtheorem{prp}   [thm]            {Proposition}
  \newcommand{\A}       {\mathcal A}
  \newcommand{\B}       {\mathcal B}
  \newcommand{\C}       {\mathcal C}
  
\renewcommand{\d}         {\partial}
  \newcommand{\De}          {\Delta}
  \newcommand{\F}       {\mathcal F}
  \newcommand{\G}       {\mathcal G}
  
  \newcommand{\Ga}          {\Gamma}
  \newcommand{\io}           {\iota}
  \newcommand{\K}       {\mathcal K}
  \newcommand{\la}         {\lambda}
  \newcommand{\La}         {\Lambda}
  \newcommand{\N}        {\mathbb N}
  \newcommand{\Ne}      {\mathcal N}
\renewcommand{\O}       {\mathcal O}
  \newcommand{\Om}          {\Omega}
\renewcommand{\P}       {\mathcal P}
  \newcommand{\Sr}       {\mathcal S}
  \newcommand{\ta}      {\mathcal T}
  \newcommand{\U}       {\mathcal U}
  \newcommand{\V}       {\mathcal V}
  \newcommand{\W}       {\mathcal W}
  \newcommand{\ra}                 {\rightarrow}
  
  \newcommand{\LR}             {\Leftrightarrow}
  \newcommand{\ma}                     {\mapsto}
  
  \newcommand{\vi}                 {\varnothing}
\renewcommand{\le}                   {\leqslant}
\renewcommand{\ge}                   {\geqslant}
  \newcommand{\subs}                 {\subseteq}
  \newcommand{\subsn}               {\subsetneq}
  \newcommand{\nsubs}               {\nsubseteq}
  \newcommand{\sups}                 {\supseteq}

  \newcommand{\ti}                      {\times}
  \newcommand{\sm}                   {\setminus}
  \newcommand{\sans}[1]    {\setminus \{ #1 \} }
  \newcommand{\fa}                     {\forall}
  \newcommand{\ex}                     {\exists}
  \newcommand{\se}[1] {\llbracket #1 \rrbracket}    %stmaryrd
  \newcommand{\et}                {~\text{and}~}
  \newcommand{\Sm}                {\text{Small}}
  \newcommand{\ci}                       {\circ}
  \newcommand{\po}                       {\cdot}

  \newcommand{\ad}                   {\overline}    %pour une seule lettre, utiliser \bar
\renewcommand{\int}                  {\mathring}
  \newcommand{\gr}[1]              {\textbf{#1}}
\renewcommand{\it}[1]              {\textit{#1}}

\newcommand{\fonct}[5]
       {\begin{array}{|rccl} #1 : & #2 & \rightarrow & #3 \\ & #4 & \mapsto & #5 \end{array}}
\newcommand{\fonctsn}[4]
       {\begin{array}{|rcl}         #1 & \rightarrow & #2 \\   #3 & \mapsto & #4 \end{array}}
\newcommand{\tend}[1]
       {\underset {#1 \rightarrow \infty}              {\longrightarrow}}

\newcommand{\clu}[5]
       {{#1}_{|(#3 \setminus \{ #4 \})} \underset {#2 \rightarrow \infty}
                                {- \!\! - \!\!\! \twoheadrightarrow} #5 }       %MnSymbol
       %MnSymbol

\begin{document}

\title {Convergence actions and Specker compactifications}
\date{\today}
\author{Clément Toromanoff}
\maketitle

\subsection*{Abstract}

 The aim of this paper is to make links between the Specker compactifications of a locally compact group and its convergence actions on compacta. If $G$ is a convergence group acting on a compactum $T$ we prove that the closure of $G$ in the attractor sum $G \sqcup T$ \cite{GeFMRHG} is a quasi-Specker compactification of $G$. Together with a theorem due to H. Abels \cite{AbSK}, this implies that for any convergence action such that the limit set $\La$ is totally disconnected there exists a surjective $G$-equivariant continuous map ${\text{Ends } \! G \ra \La}$. Conversely, when the group is compactly generated we show that any Specker compactification gives rise to a convergence action. Given two minimal convergence actions of a compactly generated group on totally disconnected compacta $M$ and $N$, we can then prove the existence of a minimal compactum $T$ admitting a convergence action of $G$ and $G$-equivariant continuous maps $T \ra M$ and $T \ra N$. We end the paper giving an interpretation of the set of ends of a compactly generated group as the completion by a \enquote{visibility} uniformity \cite{GeFMRHG}.

\section{Introduction}

 Convergence groups were introduced by F.W. Gehring and G.J. Martin \cite{GeMaDQG}. The convergence property that defines these groups reflects an essential dynamical behaviour of a kleinian group acting on the boundary of a hyperbolic space. Wider families of groups satisfy this property, for instance discrete hyperbolic (resp. relatively hyperbolic) groups can be defined as \it{uniform} (resp. \it{expansive}) convergence groups (see \cite{BoTCHG,GeECGRH,TuCLPUCG} and references therein).

 We define here the convergence property for a locally compact group acting on a compact Hausdorff space (\it{compactum} for short). Locally compact hyperbolic groups can also be characterized as uniform convergence groups (see \cite{CaDrLCCGNTA}). It seems to be interesting to ask which definition of a relatively hyperbolic group would fit the best to the locally compact case and whether one can extend results such as in \cite{GeECGRH,TuCLPUCG,YaTCRHG}.

 We refer to the introduction of \cite{GeECGRH} for a history of convergence groups and motivations for the locally compact case. More results about discrete convergence groups can be found in \cite{BoCGCS,FrePCGS,GeMaDQG,TuCGGHS}.

\medskip

 Let us fix a (always Hausdorff) locally compact non compact topological group $G$ acting continuously on a compactum $T$. We do not assume that $T$ is metrizable so that we need to work with nets rather than sequences, see section \ref{s2} for precise definitions. We identify $G$ with a subgroup of homeomorphisms of $T$ and we assume that $|T| > 2$.

 A net on $G$ is \it{wandering} if it has no subnet converging to an element of $G$. A net $(g_n)_n$ on $G$ \it{collapses} (to $a$ except at $b$, where $a,b \in T$) if for every neighborhood $V$ of $a$ in $T$ and every compact subset $K \subs T \sans b$ we have $g_n \po K \subs V$ for $n$ large enough. In other words, a net is collapsing if it converges to a constant map uniformly on compact subsets of $T$ minus one point. Note that $a$ and $b$ need not be distinct.

 Following \cite{BoCGCS}, we say that the action of $G$ on $T$ is \it{convergence} (or that $G$ is a \it{convergence group}) if every wandering net on $G$ has a collapsing subnet. The action is convergence if and only if the action on the space on distinct triples is \it{proper}, see section \ref{s3}. The \it{limit set} $\La$ of a convergence action is the set of all points $a \in T$ such that there exists a net on $G$ collapsing to $a$, it is a $G$-invariant closed subset of $T$.

\medskip

 We show in this paper that the convergence actions of $G$ are closely related to its \it{Specker compactifications} in the sense of H. Abels \cite{AbSK}. A \it{quasi-Specker compactification} of $G$ is a compactum $X$ satisfying the following properties:

\begin{itemize}
 \item $X$ contains $G$ as a dense open subset,
 \item the map $\fonct{\hat R}
     {G \ti X}        {X}
      {(g,x)}  {\left\{ \begin{array}{ccl}
                x \po g & \text{if} & x \in G \\
                   x    & \text{if} & x \in X \sm G \\ \end{array} \right. }$ is continuous (it extends the right multiplications by elements of $G$ by the identity on $X \sm G$),
 \item for all $g \in G$, the left multiplication $\fonct{L_g}{G}{G}{h}{g \po h}$ extends to a continuous map $\hat L_g : X \ra X$.
\end{itemize}

 When in addition the \enquote{boundary} $X \sm G$ is \it{totally disconnected} (i.e. its only connected subsets are the singletons) we say that $X$ is a \it{Specker compactification} of $G$.

 Note that this definition does not depend on the action of $G$ on $T$, note also that the map $\fonct{\hat L}{G \ti X}{X}{(g,x)}{\hat L_g (x)}$ is an action of $G$ on $X$ extending the left multiplications by elements of $G$. In the cases we will be interested in this action will be continuous.

\medskip

 Assume that $G$ is a convergence group acting on a compactum $T$, we denote by $\po$ both the multiplication of $G$ and the action of $G$ on $T$. V. Gerasimov introduced in \cite{GeFMRHG} a topology on the disjoint union $X = G \sqcup T$, called \it{attractor sum}, such that $X$ is a compactum and to which there is a natural extension of the convergence action of $G$. Our first result is:

\begin{thmA}[Theorem \ref{thmB}]
 Let $G$ be a locally compact topological group acting on a compactum $T$ with convergence property and let $X = G \sqcup T$ be the attractor sum. Assume that $|T| \neq 2$.

 The map $\fonct{\hat L}{G \ti X}{X}{(g,x)}{g \po x}$ is continuous, and the closure of $G$ in $X$ is a quasi-Specker compactification of $G$ (with $\hat L_g$ the restriction of $\hat L (g, \po)$ for all $g \in G$).
\end{thmA}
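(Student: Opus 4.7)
The plan is to verify joint continuity of $\hat L$ on all of $X$ first, and then derive the quasi-Specker structure of the closure $\bar G$; the most delicate piece will be joint continuity of the right-extension $\hat R$. Throughout, the workhorse is an elementary compactness lemma: if $h_\alpha \ra h$ in $G$, $K \subs T$ is compact, and $V$ is an open neighborhood of $h \po K$, then eventually $h_\alpha \po K \subs V$. This is immediate from joint continuity of the action $G \ti T \ra T$ together with the compactness of $K$.

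First I recall that convergence of a wandering net $(g_\alpha) \subs G$ to $t \in T$ in the attractor sum amounts, after passing to a subnet, to collapse to $t$ at some point $b$. In particular $\bar G = G \sqcup \La$, and since $\La$ is closed and $G$-invariant, $\bar G$ is preserved by the set-theoretic action $(g, x) \ma g \po x$. To prove continuity of $\hat L$ at $(g, x)$, take $(g_\alpha, x_\alpha) \ra (g, x)$. The cases $x \in G$ (openness of $G$ plus continuity of the group law) and $x_\alpha \in T$ eventually (continuity of the given action on $T$) are immediate. The nontrivial case is $x_\alpha \in G$ with $x \in \La$: passing to a subnet along which $x_\alpha$ collapses to $x$ at some $b$, I show $g_\alpha x_\alpha$ collapses to $g \po x$ at the same $b$. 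For any compact $K \subs T \sans b$, the set $x_\alpha K$ lies eventually in any prescribed neighborhood $V$ of $x$, and then $g_\alpha V$ lies eventually in any prescribed neighborhood of $g \po x$ by the lemma. The wandering property of $g_\alpha x_\alpha$ is automatic, for otherwise $x_\alpha = g_\alpha^{-1}(g_\alpha x_\alpha)$ would subconverge in $G$.

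The map $\hat L_g$ on $\bar G$ is then the restriction of $\hat L(g,\po)$, well-defined since $\bar G$ is $G$-invariant. Compactness of $\bar G$ and density/openness of $G$ in $\bar G$ are free. For the right-extension, set $\hat R(g, x) = x \po g$ on $G \ti G$ and $\hat R(g, x) = x$ on $G \ti \La$; this lands in $\bar G$ since $\La$ is $G$-invariant. The only case of joint continuity which is not direct is $(g_\alpha, x_\alpha) \ra (g, x)$ with $x_\alpha \in G$ and $x \in \La$. Passing to a subnet along which $x_\alpha$ collapses to $x$ at $b$, I claim that $x_\alpha g_\alpha$ collapses to $x$ at $g^{-1} \po b$: for compact $K \subs T \sans{g^{-1} \po b}$, the set $g \po K$ is compact and avoids $b$, so by the lemma $g_\alpha K$ lies eventually in any chosen compact neighborhood $K' \subs T \sans b$ of $g \po K$, and then $x_\alpha(g_\alpha K) \subs x_\alpha K'$ sits eventually in any prescribed neighborhood of $x$.

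The main obstacle is the subnet bookkeeping in these boundary cases. The blowup point $b$ is produced only after passing to a subnet and depends on the subnet chosen, so one cannot pin down a single $b$ valid for the whole net $(x_\alpha)$; the argument must be phrased via the characterization that convergence to $x$ in $X$ is equivalent to every subnet admitting a further subnet collapsing to $x$. This is routine but requires discipline with nested quantifiers, and one must also double-check at each stage that the product net remains wandering, so that convergence in $X$ to a point of $T$ is meaningful in the attractor-sum sense.
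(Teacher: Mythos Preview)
Your argument is correct and follows essentially the same strategy as the paper: reduce to nets $(x_\alpha)$ on $G$ converging to a boundary point, establish wandering of the product net, and control the collapse of the product via the collapse of $(x_\alpha)$ together with joint continuity of the $G$-action on $T$. The only noteworthy difference is directional. The paper starts from an arbitrary collapsing subnet of the product $(x_i g_i)$ (collapsing to $u$ at $v$) and works \emph{backward} to show $(x_i)$ collapses to $u$ at $g\cdot v$, whence $u=t$; the key technical step there is the ``claim'' that $g_i^{-1}\cdot K$ eventually avoids a neighborhood $W_0$ of $v$, proved by contradiction. You instead work \emph{forward}: from a collapsing subnet of $(x_\alpha)$ at $b$, you show directly that $(x_\alpha g_\alpha)$ collapses to $x$ at $g^{-1}\cdot b$, using your compactness lemma to trap $g_\alpha\cdot K$ in a fixed compact set avoiding $b$. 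Your lemma is exactly the clean replacement for the paper's contradiction argument, and the ``every subnet has a further subnet converging to $x$'' bookkeeping you flag in your last paragraph is the price of the forward direction. Both routes are equivalent in substance; yours is slightly tidier to read.
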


 As a set, the closure of $G$ in the attractor sum is the disjoint union of $G$ and the limit set $\La$ (see above). When the limit set is totally disconnected, we obtain then a (complete) Specker compactification of $G$.

\medskip

 Let $G$ be an (abstract) locally compact topological group. If $G$ acts on two compacta $M$ and $N$, we call \it{$G$-morphism} a $G$-equivariant continuous map $M \ra N$.

 By \cite[Satz 1.8]{AbSK} the \it{end compactification} $X_E$ of $G$ (in the sense of \cite{21}), for which the boundary $E = X_E \sm G$ is the \it{space of ends} of $G$, is a Specker compactification. It is universal, i.e. for any other Specker compactification $X$ of $G$ there exists a surjective $G$-morphism (for the $\hat L$ actions) $f: X_E \ra X$ extending the identity on $G$. Together with Theorem A, this implies the following:

\begin{corB}[Proposition \ref{cor12}]
 For every convergence action of $G$ on a compactum $T$ such that the limit set $\La$ is totally disconnected, there exists a surjective $G$-morphism $E \ra \La$.
\end{corB}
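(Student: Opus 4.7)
The plan is to combine Theorem A with Abels' universal property of the end compactification from \cite[Satz 1.8]{AbSK}, with only a brief boundary computation on top. Applied to the given convergence action, Theorem A produces the quasi-Specker compactification $Y := \ad{G}$ of $G$ sitting inside the attractor sum $G \sqcup T$, whose boundary $Y \sm G$ is precisely the limit set $\La$. Since $\La$ is totally disconnected by hypothesis, $Y$ is in fact a (complete) Specker compactification of $G$. Abels' universality then yields a surjective $G$-morphism $f : X_E \ra Y$ extending the identity on $G$, and I would simply restrict $f$ to the ends.

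The one point that needs a genuine verification is that $f$ sends $E = X_E \sm G$ into $\La = Y \sm G$; surjectivity of the restriction will then be automatic, because $f(G) = G$ and $f(X_E) = Y$. Equivalently, I would show $f^{-1}(G) = G$. Suppose $x \in X_E$ satisfies $f(x) = g \in G$, and pick a net $(g_\alpha)_\alpha$ in $G$ converging to $x$ in $X_E$. By continuity $g_\alpha = f(g_\alpha)$ tends to $g$ in $Y$; since $G$ is open in $Y$ and carries its own topology there, this forces $g_\alpha \ra g$ in $G$. But $G$ is also open in $X_E$ with the same topology, so $g_\alpha \ra g$ in $X_E$ as well. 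Hausdorffness of $X_E$ forces $x = g \in G$, as required.

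For equivariance, the map $f$ is a $G$-morphism for the $\hat L$-actions on $X_E$ and $Y$, and Theorem A arranges that the $\hat L$-action on $Y$ extends the original convergence action of $G$ on $T$, hence restricts on $\La$ to the given action. So $f|_E : E \ra \La$ is a continuous, surjective, $G$-equivariant map, which is exactly what Corollary B asserts.

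I do not anticipate any serious obstacle: the only step that is not a bare invocation of an earlier result is the check that $f^{-1}(G) = G$, and this follows cleanly from the openness of $G$ in both Specker compactifications together with Hausdorff separation. The statement should really be read as a direct consequence of Theorem A plus Abels' universal property.
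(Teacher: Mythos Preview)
Your proposal is correct and follows essentially the same route as the paper: apply Theorem A (more precisely its Corollary \ref{corthmB}) to obtain the Specker compactification $\ad G = G \sqcup \La$, then invoke Abels' universal property (Proposition \ref{ends}) and restrict the resulting map to the boundaries. The paper's proof differs only in that it does not spell out your net argument for $f^{-1}(G)=G$; instead, the explicit formula for the universal map in the proof of Proposition \ref{ends} sends unbounded ultrafilters to unbounded ultrafilters, so boundary goes to boundary by construction. Note also that the paper treats the trivial case $|T|=1$ separately and that Theorem A requires $|T|\neq 2$, a case split you should mention for completeness.
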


 Let us fix a convergence action of $G$ on $T$ with limit set $\La$. We consider the equivalence relation on $\La$ \enquote{belong to the same connected component} and denote with $\La'$ the associated quotient space, it is a totally disconnected compactum. We show that the natural induced action of $G$ on $T' = (T \sm \La) \sqcup \La'$ is convergence with limit set $\La'$ (Lemma \ref{qcc}). Corollary B implies then the following generalization of a result of R. S. Kulkarni \cite{KuGDD} to the locally compact case (note that we do not require here the group to be finitely or compactly generated):

\begin{corC}[Proposition \ref{cor12}]
 If a locally compact topological group admits a convergence action such that the limit set has more than two components then the group has infinitely many ends.
\end{corC}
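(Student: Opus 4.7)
The strategy is to reduce to Corollary B via the quotient $\Lambda'$, then invoke the classical dichotomy that a locally compact group has $0$, $1$, $2$, or infinitely many ends (the Freudenthal--Hopf theorem, extended to the locally compact setting).

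First I would apply Lemma \ref{qcc} to the given convergence action on $T$: the quotient compactum $T' = (T \sm \La) \sqcup \La'$ carries a convergence action of $G$ whose limit set is $\La'$, and by construction $\La'$ is totally disconnected. Since the quotient map $\La \ra \La'$ identifies exactly the points lying in the same connected component of $\La$, the cardinality of $\La'$ equals the number of connected components of $\La$. The hypothesis therefore gives $|\La'| \ge 3$.

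Next I would invoke Corollary B applied to the action on $T'$: there is a surjective $G$-morphism $E \ra \La'$, where $E$ denotes the space of ends of $G$. Surjectivity forces $|E| \ge |\La'| \ge 3$, so in particular $G$ has strictly more than two ends.

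Finally I would conclude by the Freudenthal--Hopf trichotomy in the version for locally compact, $\sigma$-compact, non-compact groups (e.g.\ as in the reference \cite{AbSK} of H. Abels, where the space of ends is studied in this generality): the number of ends $|E|$ can only be $0$, $1$, $2$, or infinite. Having ruled out the first three values, we conclude that $G$ has infinitely many ends. The only delicate point is ensuring that this trichotomy is available in the locally compact setting at the level of generality we work with here; beyond that, the argument is a direct concatenation of the previous results of the paper.
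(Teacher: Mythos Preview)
Your argument is correct and follows the paper's route up to the point where you obtain the surjection $E \ra \La'$ with $|\La'| \ge 3$. The only divergence is in the final step: you appeal to the Freudenthal--Hopf trichotomy for locally compact groups to pass from $|E| \ge 3$ to $|E| = \infty$, whereas the paper instead observes that $\La'$ itself is already infinite and concludes directly from surjectivity. Indeed, $\La'$ is the limit set of the convergence action of $G$ on $T'$, and by Proposition~\ref{limitset} a limit set with at least three points is perfect, hence infinite. This makes the paper's argument entirely self-contained: it avoids precisely the ``delicate point'' you flag, namely the availability of the Freudenthal--Hopf theorem in the locally compact (not necessarily compactly generated, not necessarily $\sigma$-compact) generality considered here. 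Your route is valid provided that trichotomy holds at this level of generality, but the paper's route sidesteps the issue altogether.
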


 Our next result deals with \it{compactly generated} groups:

\begin{thmD}[Theorem \ref{thmC+}]
 Let $G$ be a locally compact compactly generated topological group. If $X$ is a Specker compactification of $G$ then the action $\hat L$ of $G$ on $X$ is continuous, convergence and its limit set is $X \sm G$.
\end{thmD}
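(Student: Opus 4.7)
The statement has three parts: joint continuity of $\hat L \colon G \ti X \ra X$; the convergence property of this action; and the identification $\La = \partial G$, where $\partial G := X \sm G$. The strategy exploits Abels's universal end compactification $X_E$ (from \cite{AbSK}), which is itself a Specker compactification and comes with a surjective $G$-equivariant continuous map $\phi \colon X_E \ra X$ extending the identity on $G$. Using continuity of $\phi$, density of $G$ in $X_E$, openness of $G$ in both compactifications, and Hausdorffness, one checks that $\phi(E) = \partial G$---the boundary maps onto the boundary.

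The heart of the argument is to first establish joint continuity and the convergence property for the $G$-action on $X_E$. Here compact generation plays its essential role: the ends admit a geometric model as equivalence classes of escaping rays in a Cayley-type graph built from a compact generating set, and in this model small perturbations of an element of $G$ preserve ends, while wandering nets escape to ends in a controlled, collapsing fashion. Once these properties are available on $X_E$, both transfer to $X$ via $\phi$ using surjectivity and equivariance.

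The transfer works as follows. For continuity, given a net $(g_n, y_n) \ra (g, y)$ in $G \ti X$, lift each $y_n$ to $\tilde y_n \in \phi^{-1}(y_n)$, extract a subnet converging in the compact space $X_E$ to some $\tilde y \in \phi^{-1}(y)$, apply joint continuity on $X_E$ to get $g_n \tilde y_n \ra g \tilde y$, then apply $\phi$ to obtain $g_n y_n \ra g y$; a standard subnet argument gives the full net conclusion. For convergence, a wandering $(g_n)$ has a subnet collapsing on $X_E$ to some $\tilde a \in E$ except at some $\tilde b \in E$; setting $a = \phi(\tilde a)$, $b = \phi(\tilde b)$, for any compact $K \subset X \sm \{b\}$ and neighborhood $V$ of $a$, the set $\phi^{-1}(K)$ is compact in $X_E \sm \phi^{-1}(b) \subset X_E \sm \{\tilde b\}$ and $\phi^{-1}(V)$ is open around $\tilde a$, so collapse on $X_E$ gives $g_n \phi^{-1}(K) \subset \phi^{-1}(V)$ eventually, and applying $\phi$ yields $g_n K \subset V$.

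Finally, for the limit set, the inclusion $\La \subset \partial G$ is immediate: if a net $(g_n)$ collapses to $a$ with excepted point $b$, then for any $h \in G$ with $h \neq b$ we have $g_n h = \hat R(h, g_n) \ra a$, so $a \in G$ would force (by continuity of right multiplication by $h^{-1}$ in the topological group $G$) $g_n \ra a h^{-1} \in G$, contradicting wandering. Conversely, any $a \in \partial G$ is the limit of a wandering net in $G$ (a net exists by density of $G$ in $X$; it is wandering lest its limit lie in the open set $G$), and the collapse point of any collapsing subnet equals $a$ by uniqueness of limits applied to any $h \in G$ distinct from the excepted point. The main obstacle is the first step---establishing joint continuity and the convergence property on $X_E$ making nontrivial use of compact generation; once accomplished, transfer via $\phi$ is formal.
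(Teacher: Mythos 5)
The overall architecture of your proposal is legitimate, and two of its ingredients are sound: the transfer of continuity and of collapsing subnets from the ends compactification $X_E$ to $X$ via the surjective $G$-morphism of Proposition \ref{ends} works as you describe, and your identification of the limit set ($g_i \po h = \hat R(h,g_i) \tend i \hat R(h,a) = a$ forces the attracting point of any collapsing subnet to equal $a$, while $G \subs \Om$ since $\{h ~|~ h \po K \cap K \neq \vi\} = K \po K^{-1}$) is essentially the argument the paper itself uses at the end of the proof of Theorem \ref{thmC+}. The paper even anticipates your route in the remark following that theorem: Bowditch's argument for the action of a finitely generated group on its ends extends, and convergence on any Specker compactification can then be deduced.

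The genuine gap is that the step you yourself call the heart of the argument --- joint continuity of $\hat L$ and the convergence property on $X_E$ --- is asserted, not proved. Your appeal to ``a geometric model as equivalence classes of escaping rays in a Cayley-type graph'' in which ``wandering nets escape to ends in a controlled, collapsing fashion'' is not an argument: for a locally compact, possibly non-discrete group, already the identification of Abels' ends with graph-theoretic ends of a Cayley graph built from a compact generating set is a nontrivial theorem (it is the content of Proposition \ref{UGaUG} together with section \ref{s8} of the paper), and even granting it, the collapsing behaviour of wandering nets requires a genuine connectivity argument. This is exactly what the paper supplies, working directly on an arbitrary Specker compactification rather than detouring through $X_E$: continuity of $\hat L$ is imported from \cite{AbSK}, and the convergence property rests on the classification $\U(X) \subs \U^G = \U_\Ga$ (Propositions \ref{class} and \ref{UGaUG}) and on Lemma \ref{blS}, which produces at each boundary point a neighborhood basis of sets $V$ with bounded $\Ga$-boundary and $\Ga$-connected complement, together with the dichotomy $g \po B \subs V$ or $g \po B \sups X \sm V$ for $g$ close to the boundary point. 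Some equivalent of this lemma (or a fully written extension of Bowditch's argument to the locally compact setting) is precisely what your sketch needs and does not contain; without it the proposal proves the reduction but not the theorem.
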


  Let us consider the family of all totally disconnected spaces which are limit sets of a convergence action of $G$. When $G$ is compactly generated Theorem D strengthens Corollary B, stating that this family has a maximal element, the set of ends of $G$.

\medskip

 Let $G$ act with convergence property on two compacta $M$ and $N$. Following V. Gerasimov and L. Potyagailo \cite{GeLPSRHAG}, we call \it{pullback} of these actions a triple $(T,\pi_M,\pi_N)$ such that $T$ is a compactum on which $G$ acts with convergence property and $\pi_M: \La_T \ra \La_M$ and $\pi_N: \La_T \ra \La_N$ are $G$-morphisms. A pullback $(T,\pi_M,\pi_N)$ is \it{minimal} if for any other pullback $(T',\pi'_M,\pi'_N)$ there exists a $G$-morphism $\phi: T' \ra T$ such that $\pi_M \ci \phi = \pi'_M$ and $\pi_N \ci \phi = \pi'_N$.

 Using our previous results we show the following (recall that a convergence action of $G$ on a compactum $T$ is \it{minimal} if the limit set is the whole $T$):

\begin{corE}[Theorem \ref{thmD}]
 Any two minimal convergence actions of a locally compact compactly generated topological group on totally disconnected compacta containing more than two points admit a minimal pullback.
\end{corE}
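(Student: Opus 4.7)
The strategy is to realize the minimal pullback as a quotient of the end compactification $X_E$, exploiting the lattice structure on Specker compactifications given by Abels' universal property. Since $M$ and $N$ are totally disconnected and the actions are minimal, Theorem A yields Specker compactifications $X_M = G \sqcup M$ and $X_N = G \sqcup N$, and Abels' theorem furnishes surjective $G$-morphisms $f_M : X_E \ra X_M$ and $f_N : X_E \ra X_N$ extending $\Id_G$. I define the equivalence relation $\sim$ on $X_E$ by $x \sim y$ iff $f_M(x) = f_M(y)$ and $f_N(x) = f_N(y)$, and set $X_T := X_E / {\sim}$. Because $f_M, f_N$ restrict to the identity on $G$, are continuous, and are $\hat L$-equivariant, $\sim$ is trivial on $G$, closed (as the preimage of the diagonal of $X_M \ti X_N$), and $\hat L$-invariant. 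Therefore $X_T$ is a compactum on which the right and left multiplications descend, making it a quasi-Specker compactification. The map $[e] \ma (f_M(e), f_N(e))$ embeds the boundary $T := X_T \sm G$ into the totally disconnected space $M \ti N$, so $T$ is totally disconnected and $X_T$ is a (complete) Specker compactification. By Theorem D, the $\hat L$-action on $X_T$ is continuous, convergence, with limit set $T$; the two coordinate projections define $G$-morphisms $\pi_M : T \ra M$ and $\pi_N : T \ra N$, making $(T, \pi_M, \pi_N)$ a pullback.

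\textbf{Minimality.} Given another pullback $(T', \pi'_M, \pi'_N)$, Theorem A applied to $T'$ produces a quasi-Specker compactification $X_{T'} = G \sqcup \La_{T'}$ carrying a continuous $\hat L$-action. For each $x \in \La_{T'}$, pick a net $(g_n)$ in $G$ collapsing to $x$ in $X_{T'}$ and, after passing to a subnet, converging to some $e \in E$. The $G$-equivariance of $\pi'_M$ together with the minimality of the action on $M$ (so that every $G$-orbit in $M$ is dense) show that $(g_n)$ converges in $X_M$ both to $\pi'_M(x)$ and to $f_M(e)$, hence $\pi'_M(x) = f_M(e)$ by Hausdorffness; symmetrically $\pi'_N(x) = f_N(e)$. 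Thus $x \ma (\pi'_M(x), \pi'_N(x))$ defines a continuous $G$-equivariant map $\La_{T'} \ra T$, which combined with the identity on $G$ extends to a $G$-morphism $X_{T'} \ra X_T$ (continuity at the boundary via the attractor-sum description). Its restriction gives $\phi$ on $\La_{T'}$. To extend $\phi$ to all of $T'$, I use that each $t' \in T' \sm \La_{T'}$ has a proper $G$-orbit whose accumulation in $T'$ lies in $\La_{T'}$, so that $\phi(t')$ is prescribed by continuity and $G$-equivariance; one verifies $\pi_M \ci \phi = \pi'_M$ and $\pi_N \ci \phi = \pi'_N$.

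\textbf{Main obstacle.} The central technical step is the identity $\pi'_M(x) = f_M(e)$: it calls for comparing the attractor-sum topologies on $X_M$ and $X_{T'}$ along a single collapsing net, using the minimality of $M$ to upgrade the collapse on $\pi'_M(\La_{T'}) \subs M$ to a collapse on all of $M$ before invoking Hausdorff uniqueness of limits in $X_M$. A secondary difficulty is the coherent extension of $\phi$ past the limit set, which rests on the proper action of $G$ on $T' \sm \La_{T'}$ and an orbit-closure argument reconciling the extension with $G$-equivariance and continuity across $\La_{T'}$.
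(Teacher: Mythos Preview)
Your construction of the pullback coincides with the paper's: the quotient $X_E/{\sim}$ embeds via $[e] \mapsto (f_M(e), f_N(e))$ onto the closure $\Sr(X_M, X_N)$ of the diagonal in $X_M \times X_N$, which is precisely the paper's Lemma~\ref{Spb}, and both arguments then apply Theorem~D to obtain a minimal convergence action on the boundary $T$.

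For the universality your route differs from the paper's and is somewhat more direct. The paper first replaces the competing pullback $T'$ by its totally disconnected quotient (Lemma~\ref{qcc}), forms the resulting Specker attractor sum, extends the maps $\pi'_M,\pi'_N$ to the attractor sums via Lemma~\ref{cGeAS}, and invokes the universal property of $\Sr$. You instead observe that $\phi := (\pi'_M, \pi'_N): T' \to M \times N$ is already continuous and $G$-equivariant, and use a collapsing-net argument to show its image lies in $T \subset M \times N$: if $g_n \to x$ in $G \sqcup T'$ then $(g_n,g_n) \to (\pi'_M(x),\pi'_N(x))$ in $X_M \times X_N$, hence the limit lies in the closure of the diagonal. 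The step $g_n \to \pi'_M(x)$ in $X_M$ that you flag as the main obstacle is exactly the content of Lemma~\ref{cGeAS}; note that what is actually used is that $\pi'_M$ is non-constant and $|T'|\ge 3$ (both from surjectivity onto $M$), not the minimality of the action on $M$ per se. Your approach buys you the avoidance of the totally-disconnected quotient; the paper's buys a cleaner fit with the Specker machinery already in place.

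Your final paragraph on extending $\phi$ from $\La_{T'}$ to all of $T'$ is based on a misreading: in the operative definition (Section~\ref{s7}) a pullback carries a \emph{minimal} convergence action, so $T' = \La_{T'}$ and there is nothing to extend. Under the looser phrasing from the introduction your orbit-closure sketch would not work anyway --- a point of the ordinary set has no canonical image in the totally disconnected space $T$, and continuity across $\La_{T'}$ does not determine one --- but the paper does not treat that case either.
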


 Note that Theorem D and Corollary E are not true if we omit the totally disconnectedness property (or if we replace \enquote{Specker} by \enquote{quasi-Specker}). We also need to assume that the group is compactly generated. Counterexamples are due to \cite{BaRiCTMDNAE} and \cite{GeLPSRHAG}, see the discussion at the end of section \ref{s7}.

\medskip

 We conclude our paper giving an interpretation of the space of ends of a locally compact compactly generated group as a \enquote{visibility completion} .

 Recall that the notion of the ends of a topological space or group is originally due to H. Freudenthal \cite{FreuUETRG,FreuNdE}. J. R. Stallings gives in \cite{StGTTDM} a construction of the space of ends of a finitely generated group $G$ (or of a connected locally finite graph) as the set of maximal ideals of a Boolean subalgebra of the power set $\P(G)$. H. Abels extends in \cite{AbSK} this construction to any locally compact space or group.

 Any end of $X$ in the sense of H. Freudenthal naturally gives an end in the sense of H. Abels. When $X$ is locally compact, connected and locally connected, this induces in fact a homeomorphism so that the two definitions coincide (it is a consequence of \cite[I,p.117,ex.19]{BouTG}).

\medskip

 Let $\Ga = (V,E)$ be a locally finite connected graph. Following V. Gerasimov \cite[4.1]{GeFMRHG}, we consider the uniformity on $V$ generated by the sets of couples of vertices that can be connected by a path avoiding a fixed finite set of edges. This uniformity is a \enquote{visibility}, see \cite{GeFMRHG}.

 The boundary of the completion by this uniformity can naturally be seen as the set of ends of $\Ga$ in the sense of H. Freudenthal, and V. Gerasimov points out the fact that this is homeomorphic to the set of ends of $\Ga$ in the sense of J. R. Stallings. In particular the two definitions also coincide in this context.

 We show that the same holds for a compactly generated locally compact group (see section \ref{s8} for precise definitions):

\begin{prpF}[Theorem \ref{thmE}]
 Let $G$ be a compactly generated locally compact topological group. The space of ends of $G$ (in the sense of H. Abels) is homeomorphic to the boundary of the completion of $G$ by a \enquote{visibility} uniformity.
\end{prpF}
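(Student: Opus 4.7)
The plan is to reduce to the graph case and then transfer the known identification. First I would use compact generation to build a locally finite connected graph on which $G$ acts: fix a compact symmetric generating set $S$ of $G$ with nonempty interior and, when $G$ is not totally disconnected, a compact open \enquote{fundamental} neighborhood $U$ of the identity in a sufficiently rigid subgroup (or more generally a standard Cayley--Abels type construction in the spirit of \cite{AbSK,21}) so as to obtain a locally finite graph $\Ga = (V,E)$, cocompactly quasi-isometric to $G$, such that $G$ acts on $\Ga$ with compact vertex and edge stabilizers. The visibility uniformity on $G$ should then be shown to be the pull-back, along any orbit map $G \ra V$, of the visibility uniformity on $\Ga$ from \cite{GeFMRHG}: two group elements are close at scale $K$ iff their images lie in the same component of $\Ga$ minus the edges touched by $K$, and conversely any edge-finite set of $\Ga$ lifts to a compact subset of $G$.

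The second step is to exploit the fact, already recalled in the excerpt, that the visibility completion of $\Ga$ has boundary homeomorphic to the Freudenthal ends of $\Ga$, and that for the connected locally connected (after thickening) locally compact space naturally associated with $\Ga$, Freudenthal ends agree with Abels' ends via \cite[I,p.117,ex.19]{BouTG}. So the boundary of $\Ga$'s visibility completion is homeomorphic to the space of ends of $\Ga$ in the sense of Abels.

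Third, I would transfer ends from $\Ga$ back to $G$. The cocompact action with compact stabilizers gives a $G$-equivariant coarse equivalence $G \ra V$ that is bornologous in both directions; by the functoriality of Abels' construction this induces a homeomorphism between the end spaces of $G$ and of $\Ga$. Concretely, the Boolean algebra of \emph{almost invariant} subsets modulo compact symmetric differences pulls back and pushes forward between $G$ and $\Ga$, so their spaces of maximal ideals (equivalently, of ends) coincide. Combining this with the previous step yields $E(G) \cong \partial \hat \Ga$, and with the first step $\partial \hat \Ga \cong \partial \hat G$, giving the required homeomorphism.

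The main obstacle will be the first step: verifying that the visibility uniformity of $G$ really does coincide with the pull-back of that of $\Ga$, and setting up an adequate graph $\Ga$ when $G$ is neither totally disconnected nor discrete. One has to check carefully that for any compact $K \subs G$ there is a finite set of edges of $\Ga$ whose deletion separates orbits exactly as $K$ separates $G$, and vice versa; the cocompactness of the action on $\Ga$ and the fact that $S$ has nonempty interior are what makes this quantitative comparison go through. Once this matching of \enquote{separating sets} is established the rest is bookkeeping inside Abels' Boolean-algebra formalism together with the graph-theoretic statement of Gerasimov.
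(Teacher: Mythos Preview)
Your strategy is quite different from the paper's. The paper never passes to an auxiliary locally finite graph: it works directly on $G$, whose Cayley graph (vertex set $G$, edges $(x,xf)$ for $f$ in a compact generating neighbourhood $F$) is typically \emph{not} locally finite. The visibility uniformity is generated by the entourages $v_M$ for $M$ a \emph{bounded} (not finite) set of edges. The proof then exhibits an explicit bijection $f:\tilde G\setminus G\to E(G)$, sending a non-convergent minimal Cauchy filter $\F$ to the unbounded ultrafilter $\F\cap\U^G$, and checks by hand (using Proposition~\ref{UGaUG}, which says $\U_\Ga=\U^G$) that this is a homeomorphism. The key combinatorial input is a lemma showing that for each bounded edge-set $M$ only finitely many $(\Ga\setminus M)$-components meet a fixed compact annulus, so that an ultrafilter in $\U^G$ picks out a unique unbounded component $\C(M,\F)$; these components form a base of the inverse filter. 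No quasi-isometry argument, no Cayley--Abels graph, no appeal to the discrete case is used.

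Your route could in principle be made to work, and it has the appeal of reducing to Gerasimov's graph statement, but there is a real gap in Step~1 as written. The Cayley--Abels construction you invoke requires a compact \emph{open} subgroup and hence applies only to totally disconnected $G$; for a connected Lie group, say, there is no such graph obtained that way. One can still produce a locally finite graph quasi-isometric to $G$ by other means, but then the claim that the paper's visibility uniformity on $G$ is the pull-back of the graph visibility along an orbit map is not obvious: you must match \emph{bounded} edge-sets in the (non locally finite) Cayley graph of $G$ with \emph{finite} edge-sets in $\Ga$, and the orbit map need not be injective. Even granting this, you still need that Abels' ends are invariant under the coarse equivalence $G\to V$; this is true, but it is essentially the content of Proposition~\ref{UGaUG} plus some bookkeeping, so the saving over the direct argument is modest. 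In short: the paper's proof is shorter and self-contained, while yours trades a direct computation for an appeal to QI-invariance results that themselves require nontrivial verification in the non-discrete, non-totally-disconnected case.
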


\subsubsection*{Structure of the paper}

 Section \ref{s2} of our paper provides topological preliminaries, especially the definition and basic properties of the nets. In section \ref{s3} we establish some properties concerning locally compact convergence groups. In section \ref{s5} we give the construction of the attractor sum and prove Theorem A. In section \ref{s6} we recall Abels' classification of the Specker compactifications of a group, we can then define the space of ends and prove Corollaries B and C. In section \ref{s7} we give some properties of the Cayley graph of a compactly generated group and we prove Theorem D and Corollary E. Section \ref{s8} is devoted to the proof of Proposition F.

\subsubsection*{Acknowledgements}

 This paper was written during my doctoral thesis, I am grateful to my director Leonid Potyagailo for patiently listening to me, reading preliminaries versions of the paper and providing numerous corrections. I also want to thank Professor Herbert Abels for inviting me to Bielefeld and the interesting discussions we had. This work was supported in part by the Labex CEMPI (ANR-11-LABX-0007-01).

\section{Preliminaries}\label{s2}

 In the whole paper $|A|$ denotes the cardinality of a set $A$. A subset of a topological space is \gr{bounded} if it is contained in a compact subspace.

 Let $X$ be a topological space. For all $x \in X$, we denote with $\Ne_X (x)$ the family of all (not necessarily open) neighborhoods of $x$ in $X$. Recall that a \gr{local basis} at a point $x \in X$ is a subfamily $\B$ of $\Ne_X (x)$ such that every neighborhood of $x$ contains a member of $\B$. A \gr{compactum} is a compact Hausdorff space. By convention, all locally compact spaces (or groups) that we consider are assumed to be Hausdorff.

\medskip

 It is convenient for us to work with \enquote{nets}, which we define below. This notion is equivalent to the notion of filter (to every net we can associate a filter with the same convergence properties and conversely), we believe it is easier to describe the dynamical properties of convergence using nets.

 We fix a topological space $X$.

\begin{defis}\ 
 \begin{itemize}
  \item A set $D$ is \gr{directed} by a transitive reflexive relation $\ge$ if for all $n,m \in D$ there exists $p \in D$ such that $p \ge n$ and $p \ge m$.
  \item A property $P$ defined of a directed set $(D,\ge)$ is true \gr{for n large enough} if there exists $m \in D$ such that whenever $n \in D$ satisfies $n \ge m$ we have $P(n)$.
  \item A \gr{net} on $X$ is a map $z: D \ra X$ where $D$ is a directed set, we will write it $(z_n)_{n \in D}$ or simply $(z_n)_n$. We also say that the net is \gr{on} a subset $A$ of $X$ if $z_n \in A$ for all $n$.
  \item A net $(z_n)_n$ on $X$ \gr{converges} to a point $x \in X$ (we write then $z_n \tend{n} x$) if for all $U \in \Ne_X (x)$, $z_n \in U$ for $n$ large enough.
  \item A net $(y_e)_{e \in E}$ is a \gr{subnet} of a net $(z_d)_{d \in D}$ if there exists a map $N: E \ra D$ such that $y = z \circ N$ (i.e. $\fa e \in E, ~y_e = z_{N(e)}$) and such that for all $d \in D$ we have $N(e) \ge d$ for $e$ large enough. We will often not need to explicit the map $N$ and we will write the subnet $(z_e)_e$ rather than $(z_{N(e)})_{e \in E}$.
  \item Let $(z_n)_n$ be a net on $X$. A point $x \in X$ is a \gr{cluster point} of $(z_n)_n$ if there exists a subnet of $(z_n)_n$ converging to $x$. The net $(z_n)_n$ is \gr{wandering} if it has no cluster point.
 \end{itemize}
\end{defis}

 Note that if $(z_n)_n$ is a net on $X$ and if $P$ is a property on elements of $X$, the assertion \enquote{$P(z_n)$ for $n$ large enough} is false if and only if there exists a subnet $(z_i)_i$ such that $P(z_i)$ is false for all $i$.

\medskip

 Recall the following characterizations (widely used in the langage of sequences when the spaces are assumed metrizable for instance), see \cite{KeGT} for instance:

 \begin{itemize}
  \item A map $f: X \ra Y$ (where $Y$ is a topological space) is at a point $x \in X$ if and only if for every net on $X$ converging to $x$, the net $(f(x_n))_n$ converges to $f(x)$.
  \item A subset $A \subs X$ is compact if and only if every net on $A$ has a cluster point in $A$.
 \end{itemize}

 This justify that we repeatedly use formulations such as \enquote{up to passing to a subnet} in our proofs.

\medskip

 Recall that $X$ is \gr{totally disconnected} if its only connected subsets are its singletons, by convention the empty space is not totally disconnected.

\section{Locally compact convergence groups}\label{s3}

 We establish in this section basic facts about locally compact convergence groups acting on a (non necessarily metrizable) compactum.

 We first give the definition of convergence group that we will use. We show then that standard statements concerning the limit set of such an action extend to the locally compact case. We conclude with a lemma that provides convergence actions with totally disconnected limit sets.

\medskip

 Let us fix a locally compact group $G$ acting continuously on a compactum $T$.

\begin{defis}\ 
 \begin{itemize}
  \item A net $(g_n)_n$ on $G$ is \gr{collapsing} if there exists $a,b \in T$ such that for every compact subset $K \subs T \sans{b}$ and every neighborhood $U \in \Ne_T (a)$ we have $g_n (K) \subs U$ for $n$ large enough. In this case we write $\clu{g_n}{n}{T}{b}{a}$.
  \item The group $G$ is a \gr{convergence group}, or the action of $G$ on $T$ is \gr{convergence}, if every wandering net on $G$ has a collapsing subnet.
 \end{itemize}
\end{defis}

 Note that if $G$ is compact or if $|T| \leq 2$ the action is always convergence. When $|T| > 2$, any collapsing net $(g_n)_n$ is wandering and such that the set $\{ g_n ~|~ n \}$ is not bounded.

\medskip

 A more standard definition of convergence is the $3$-properness, that we define now. Let $\ta$ be the locally compact subspace $\{ (x,y,z) \in T^3 ~|~ x \neq y \neq z \neq x \}$e it is the space of \gr{distinct triples} of $T$.

\begin{defi}
 The action of $G$ on $T$ is \gr{proper on triples}, or \gr{$3$-proper}, if for all compact subsets $K$ and $L$ of $\ta$ the set $\{ g \in G ~|~ g \po K \cap L \neq \vi \}$ is bounded.
\end{defi}

 When $G$ is a discrete group, the usual terminology is \enquote{properly discontinuous} instead of \enquote{proper}.

 B. Bowditch proves in \cite[\S 1]{BoCGCS} that if $G$ is discrete then $G$ is a convergence group if and only if its action on $T$ is $3$-properly discontinuous. His arguments extend without difficulty to our situation, we have then:

\begin{prp}
 The action of $G$ on $T$ is convergence if and only if it is $3$-proper.
\end{prp}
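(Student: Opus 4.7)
The plan is to prove the two implications separately, adapting Bowditch's sequence-based argument from the discrete case to nets on a locally compact group acting on a (possibly non-metrizable) compactum.

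For the direction ``convergence $\Rightarrow$ $3$-proper'', I would argue by contradiction. If the action is not $3$-proper, there exist compacta $K, L \subs \ta$ and an unbounded net $(g_n)_n$ on $G$ with points $(x_n,y_n,z_n) \in K$ such that $g_n \po (x_n,y_n,z_n) \in L$ for all $n$. By compactness of $K$, $L$ and local compactness of $G$, I pass to a subnet so that $(x_n,y_n,z_n) \to (x,y,z) \in K$, $g_n \po (x_n,y_n,z_n) \to (p,q,r) \in L$, and $(g_n)_n$ is wandering (any cluster point in $G$ would contradict unboundedness since $G$ is locally compact). Applying the convergence property, extract a further subnet with $\clu{g_n}{n}{T}{b}{a}$. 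At most one of $x,y,z$ equals $b$, so at least two lie in some compact subset of $T \sans b$; passing $x_n$ to small compact neighborhoods of $x,y,z$ disjoint from $b$, the corresponding images $g_n(x_n), g_n(y_n), g_n(z_n)$ eventually lie in every neighborhood of $a$, forcing at least two of $p,q,r$ to coincide with $a$ and contradicting $(p,q,r) \in \ta$.

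For ``$3$-proper $\Rightarrow$ convergence'', let $(g_n)_n$ be wandering. Using Tychonoff's theorem, $T^T$ is compact, so I pass to a subnet along which $g_n \to f$ and $g_n^{-1} \to f'$ pointwise for some $f,f' : T \to T$. The crucial step is to show $|f(T)| \le 2$: if $f$ took three distinct values on three distinct points $x,y,z$, then $(g_n x, g_n y, g_n z)$ would eventually lie in a compact neighborhood $L \subs \ta$ of $(f(x),f(y),f(z))$, and with $K = \{(x,y,z)\}$ the set $\{g \in G \mid g \po K \cap L \neq \vi\}$ would contain the net tail, contradicting both $3$-properness and wanderingness. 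Similarly $|f'(T)| \le 2$. Then I would pick $a \in f(T)$ and $b \in f'(T)$ as the ``generic'' values (the ones whose preimage is non-exceptional); the interplay between $f$ and $f'$ via $g_n \ci g_n^{-1} = \Id$ should force $f$ to equal $a$ on all of $T \sm \{b\}$ and $f'$ to equal $b$ on all of $T \sm \{a\}$, by a further application of $3$-properness to triples mixing preimage-points of both maps.

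The main obstacle is the final step: upgrading pointwise convergence $g_n \to a$ on $T \sans b$ to uniform convergence on compact subsets. If this failed, I would extract a sub-subnet $(g_i)$ with points $x_i \to x \in T \sans b$ but $g_i(x_i) \to p \neq a$; choosing two additional points $y \neq z$ in $T \sans b$ distinct from $x$, the triple $(g_i x_i, g_i y, g_i z) \to (p, a, a)$ is not in $\ta$, but by replacing $y$ or $z$ by a preimage-point of $b$ under $f'$ (which exists since $|T| > 2$ and $|f'(T)| \le 2$), I should be able to build a net of distinct triples whose images stay in a compactum of $\ta$, contradicting $3$-properness. This diagonal-style argument is where passing from sequences to nets requires the most care, and it is essentially here that the preliminaries of section \ref{s2} on subnets and cluster points are used.
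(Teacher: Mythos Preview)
The paper does not actually prove this proposition: it simply cites Bowditch \cite[\S 1]{BoCGCS} for the discrete case and asserts that ``his arguments extend without difficulty to our situation''. Your sketch is precisely an attempt to carry out that extension, so the approaches are the same.

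Your first implication (convergence $\Rightarrow$ $3$-proper) is correct as written. In the second implication, the Tychonoff setup and the proof that $|f(T)|\le 2$ are fine, and your remark that the identification of $a,b$ uses ``triples mixing preimage-points of both maps'' is pointing at the right idea (take $y_0\in f^{-1}(a)\sm\{b\}$, $u\in T\sm\{a,a'\}$, and apply $g_n^{-1}$ to the moving triple $(g_n x_0,g_n y_0,u)$).

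However, your proposed mechanism for the final upgrade to uniform convergence is confused. A ``preimage-point of $b$ under $f'$'' is a point $w$ with $g_n^{-1}(w)\to b$; this gives no control over $g_n(w)$, which still tends to $f(w)\in\{a,f(b)\}$, so replacing $y$ or $z$ by such a $w$ cannot make the image triple $(g_i x_i,g_i y,g_i w)$ converge in $\ta$. The correct move is again to pass to $g_i^{-1}$ and use a \emph{mixed} triple: with $x_i\to x\neq b$, $g_i x_i\to p\neq a$, choose $y_0\in T\sm\{x,b\}$ and $u\in T\sm\{a,p\}$, and consider $(g_i x_i,\,g_i y_0,\,u)\to(p,a,u)\in\ta$; applying $g_i^{-1}$ gives $(x_i,\,y_0,\,g_i^{-1}u)\to(x,y_0,b)\in\ta$, contradicting $3$-properness for the wandering net $(g_i^{-1})$. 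This is the step where Bowditch's sequence argument needs the most care when rewritten with nets, but once phrased this way it goes through verbatim.
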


 In the following we assume that $G$ is a convergence group.

\begin{defis}\ 
 \begin{itemize}
  \item The action is \gr{proper} at $t \in T$ if there exists $V \in \Ne_T (t)$ such that the set $\{ g \in G ~|~ g \po V \cap V \neq \vi \}$ is bounded. The \gr{ordinary set} $\Om$ is the set of all points where the action is proper.
  \item The \gr{limit set} $\La$ is the set of \gr{attractive points}, i.e.:
$$\La = \{ u \in T ~|~ \ex v \in T, ~\ex (g_n)_n ~\text{net on}~ G ~\text{such that}~ \clu{g_n}{n}{T}{v}{u} \}.$$
  \item The action is \gr{minimal} if $\La = T$.
 \end{itemize}
\end{defis}

\begin{prp}\label{limitset}\ 
 \begin{itemize}
  \item The limit set $\La$ is a closed $G$-invariant subset of $T$ and we have $T = \La \sqcup \Om$.
  \item The restriction of the action of $G$ on $\La$ is convergence and minimal.
  \item If $|\La| \ge 3$ then $\La$ is perfect (i.e. every point is an accumulation point), hence infinite.
 \end{itemize}
\end{prp}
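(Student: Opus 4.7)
I treat the three bullet points of the proposition in order, freely using the equivalence between convergence and $3$-properness stated just above. For the first bullet, the $G$-invariance of $\La$ is immediate: if $\clu{g_n}{n}{T}{v}{u}$ and $h \in G$, continuity of left multiplication by $h$ gives $\clu{h g_n}{n}{T}{v}{h \po u}$. To prove $\La$ closed, take a net $(u_i)_i$ in $\La$ with $u_i \tend{i} u$; pick a collapsing net $\clu{g_n^{(i)}}{n}{T}{v_i}{u_i}$ for each $i$, assume (up to subnet, by compactness) that $v_i \tend{i} v$, and perform a diagonal extraction indexed by tuples $(i, U, V, K)$ with $U \in \Ne_T(u)$, $V \in \Ne_T(v)$ and $K \subs T \sm V$ compact, to obtain a single net $(g_\alpha)_\alpha$ with $\clu{g_\alpha}{\alpha}{T}{v}{u}$; hence $u \in \La$. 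For the decomposition $T = \La \sqcup \Om$, disjointness comes from the observation that if $u \in \La$ via $\clu{g_n}{n}{T}{v}{u}$ and $V \in \Ne_T(u)$ has compact closure missing $v$, then $g_n \po \ad V \subs V$ eventually while $(g_n)_n$ is unbounded, contradicting properness at $u$; conversely, if $u \notin \Om$, then for every $V \in \Ne_T(u)$ the set $\{ g \in G : gV \cap V \neq \vi \}$ is unbounded, letting me build a wandering net $(g_n)_n$ with $g_n V_n \cap V_n \neq \vi$ along a neighborhood basis, whose collapsing subnet has attractive point forced to be $u$ by a small-neighborhood argument.

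For the second bullet, convergence of the restricted action reduces to showing that in a collapsing net $\clu{g_n}{n}{T}{v}{u}$ both endpoints lie in $\La$. The point $u$ belongs to $\La$ by definition. For $v$, I apply convergence to the wandering net $(g_n^{-1})_n$ to extract a collapsing subnet $\clu{g_n^{-1}}{n}{T}{v'}{u'}$, then, by evaluating $g_n^{-1}$ on fixed points near $u$ and $g_n$ on fixed points near $v$, argue that $u' = u$ and $v' = v$, so $v \in \La$. Minimality then reduces to density of every orbit: given $u, u' \in \La$, pick a collapsing $\clu{g_n}{n}{T}{v}{u}$; if $u' \neq v$ then $g_n \po u' \tend{n} u$ directly, and if $u' = v$ I first move $u'$ by some $h \in G$ ensuring $h \po u' \neq v$, whose existence is guaranteed by $G$-invariance of $\La$ together with $|\La| \ge 2$ (the case $|\La| \le 1$ being trivial).

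For the third bullet I argue by contradiction. Assume $u \in \La$ is isolated in $\La$, and fix an open $V \subs T$ with $V \cap \La = \{u\}$. By minimality there is a collapsing net $\clu{g_n}{n}{T}{v}{u}$ with $v \in \La$. Then for any compact $K \subs \La \sans{v}$ one has $g_n \po K \subs V$ eventually, and $G$-invariance of $\La$ upgrades this to $g_n \po K \subs V \cap \La = \{u\}$; since $g_n$ is a homeomorphism, $K$ must be a singleton. Choosing $K \subs \La \sans{v}$ with $|K| = 2$ (possible because $|\La| \ge 3$) yields the contradiction. The main obstacle, to my mind, lies in the first bullet: the diagonal extraction proving $\La$ closed and the inverse-net manipulation placing the repelling point $v$ in $\La$ both require careful organization of the directed sets through successive subnet passages in the non-metrizable setting, where sequential reasoning is unavailable.
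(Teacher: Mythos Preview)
Your strategy departs from the paper's in a way that both creates extra work and leaves two gaps.

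The paper never proves closedness of $\La$ directly: it establishes $\La = T \sm \Om$, and since $\Om$ is open by definition, closedness is immediate. Your diagonal extraction is therefore unnecessary. More importantly, in your disjointness argument you pick $V \in \Ne_T(u)$ with $\ad V$ missing the repeller $v$; this is impossible when $u = v$, a case you have not excluded. The paper instead first observes (from $|T| \ge 3$) that any attractor is non-isolated in $T$, so every $V \in \Ne_T(u)$ contains a point $w \notin \{u,v\}$, and then $g_n \po w \tend n u$ gives $g_n \po V \cap V \neq \vi$ eventually regardless of whether $u = v$. For the reverse inclusion $T \sm \Om \subs \La$, your claim that the attractor of the collapsing subnet is forced to equal the given point is too strong: the paper obtains only that the point lies in $\{u,v\}$ (attractor or repeller of the subnet), which suffices since both lie in $\La$.

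For minimality, recall that the paper's definition is that the limit set of the restricted action equals $\La$, not that every orbit is dense. Once you know repellers lie in $\La$, every collapsing net in $T$ restricts to a collapsing net in $\La$, so each point of $\La$ is an attractor for the restricted action and minimality follows in one line; the paper simply says this is ``then easy''. Your detour through orbit density contains an unjustified step: when $u' = v$, the existence of $h$ with $h \po u' \neq v$ does not follow from $G$-invariance of $\La$ and $|\La| \ge 2$ --- nothing so far rules out $u'$ being a global fixed point of $G$. Your argument for the third bullet is correct and essentially matches the paper's, which embeds it into the non-isolation step above.
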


\begin{proof}
 For all $t \in T$, a subset $K \subs T \sans t$ is compact if and only if the set $T \sm K$ is an (open) neighborhood of $t$ in $T$. As the action is continuous, for all $u, v \in T$, for all $g \in G$ and for every net $(g_n)_n$ on $G$, we have $\clu{g_n}{n}{T}{v}{u}$ if and only if $\clu{(g \po g_n)}{n}{T}{v}{g \po u}$, we deduce that $\La$ is $G$-invariant.

 Let us now show that $\La = T \sm \Om$, as $\Om$ is open by construction, $\La$ is closed. If $|T| \leq 2$ we easily get $\La = T$, we assume then that $|T| \geq 3$.

 First, let $\la \in \La$: there exists $\mu \in T$ and a wandering net $(g_n)_n$ on $G$ such that $\clu{g_n}{n}{T}{\mu}{\la}$. We can then pick two distinct points $x,y \in T \sans \mu$ and we have $g_n \po x, g_n \po y \tend n \la$ so that $\la$ is non isolated. Under the hypothesis $|\La| \ge 3$ we can assume $x,y \in \La$, as $g_n \po y, g_n \po y \in \La$ for all $n$ we have the last point.

 Let $V \in \Ne_T (\la)$, there exists a point $v \in V \sans {\la,\mu}$, and as $g_n \po v \tend n \la$, we have $g_n \po V \cap V \neq \vi$ for $n$ large enough. The set $\{ g \in G ~|~ g \po V \cap V \neq \vi \}$ being not bounded we have $\la \in T \sm \Om$.

 Let then $\la \in T \sm \Om$. For every $V \in \Ne_T (\la)$ and every compact subset $K \subs G$ there exists then $v = v (V,K) \in V$ and $g_v \in G \sm K$ such that $g_v \po v \in V$. By construction the net $(g_v)_{(V,K)}$ is wandering so that there exists $u,v \in T$ and a subnet $(g_i)_i$ satisfying $\clu{g_i}{i}{T}{v}{u}$, and as $v_i \tend i v$ and $g_i \po v_i \tend i v$ we deduce that $\la \in \{ u, v \} \subs \La$. We have the first point, the second is then easy.
\end{proof}

 Our next lemma will be used to get convergence actions with totally disconnected limit sets.

 Let $\La'$ be the quotient space of $\La$ by the equivalence relation \enquote{belong to the same connected component}. We extend this relation to $T$ with the equality on $\Om$, we write $T'$ the quotient space and $\pi: T \ra T'$ the canonical projection. Note that $T' = \Om \sqcup \La'$.

\begin{lem}\label{qcc}\ 
 The space $T'$ is a compactum and $\La'$ is a totally disconnected subspace. The formula $g \po \pi (t) = \pi (g \po t)$ induces a well-defined action of $G$ on $T'$. This action is continuous and convergence, with limit set $\La'$.
\end{lem}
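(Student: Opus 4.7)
The plan is to check, in order, that $T'$ is a compactum, that $\La'$ is totally disconnected, that the formula $g \po \pi(t) = \pi(g \po t)$ defines a continuous $G$-action on $T'$, and that this action is convergence with limit set exactly $\La'$. Since $T' = \pi(T)$ is the continuous image of compact $T$, compactness is automatic, and the real content is Hausdorffness. I would deduce it from the closedness of the graph $R = \{ (x,y) \in T \ti T ~|~ x \sim y \}$ of the equivalence relation in $T \ti T$.

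For closedness of $R$, consider a net $(x_n, y_n) \in R$ converging to $(x,y)$. Either $x_n = y_n$ for $n$ large enough, in which case $x = y \in R$; or a subnet has $x_n \neq y_n$, and then the definition of $\sim$ forces $x_n, y_n \in \La$ in the same connected component of $\La$, so in particular $x, y \in \La$ by closedness of $\La$. The essential input in this remaining case is that in the compact Hausdorff space $\La$, connected components coincide with quasi-components: $x \sim y$ if and only if every clopen $W \subs \La$ containing $x$ also contains $y$. Given such a $W$, openness in $\La$ gives $x_n \in W$ eventually; since $W$ is a union of components of $\La$ and $y_n$ lies in the component of $x_n$, $y_n \in W$ eventually; finally closedness of $W$ in $\La$ forces $y \in W$. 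Hence $R$ is closed and $T'$ is a compactum. Total disconnectedness of $\La'$ follows right away from the same input: clopen subsets of $\La$ are automatically $\sim$-saturated (unions of components), and by quasi-component$~=~$component they separate any two distinct points of $\La'$.

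For the action: well-definedness uses Proposition \ref{limitset} (the set $\La$ is $G$-invariant) combined with the fact that each $g \in G$ acts as a homeomorphism of $T$ and therefore permutes components of $\La$. Joint continuity of $G \ti T' \ra T'$ then follows from the universal property of the quotient $\Id_G \ti \pi : G \ti T \ra G \ti T'$, which is a quotient map because $G$ is locally compact Hausdorff; the continuous map $G \ti T \ra T'$, $(g,t) \ma \pi(g \po t)$, factors through it. For convergence, let $(g_n)_n$ be wandering in $G$; by convergence on $T$ a subnet satisfies $\clu{g_n}{n}{T}{b}{a}$ with $a \in \La$. I claim this subnet collapses on $T'$ to $\pi(a)$ except at $\pi(b)$: for a compact $K \subs T' \sans{\pi(b)}$ and $U \in \Ne_{T'}(\pi(a))$, the set $\pi^{-1}(K)$ is a compact subset of $T \sans{b}$ and $\pi^{-1}(U)$ is an open neighborhood of $a$, so the $T$-collapsing gives $g_n \po \pi^{-1}(K) \subs \pi^{-1}(U)$ eventually, whence $g_n \po K \subs U$. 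This proves convergence on $T'$ and the inclusion $\La' \subs \La_{T'}$; the reverse inclusion $\La_{T'} \subs \La'$ follows by lifting any net collapsing on $T'$ to a subnet collapsing on $T$ and comparing attractors at any lifted point of $T' \sans{\pi(b)}$ (whose preimages necessarily miss $b$). The main obstacle is really the Hausdorffness of $T'$: every other step is a direct lift from $T$ or a routine quotient manipulation, whereas the identification of components with quasi-components is the indispensable compact Hausdorff input where the proof has genuine content.
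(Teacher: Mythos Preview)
Your argument is correct and complete; the differences from the paper's proof are in presentation rather than in substance, but they are worth noting.

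For Hausdorffness of $T'$, the paper separates points by hand in two cases ($t_1 \in \Om$; $t_1,t_2 \in \La'$), producing explicit disjoint neighbourhoods in each. You instead invoke the general fact that a quotient of a compactum by a closed equivalence relation is Hausdorff, and then verify closedness of the graph $R$ via nets and the identification of components with quasi-components in the compact Hausdorff $\La$. Both routes ultimately rest on the same input (clopen sets in $\La$ separate components), but your packaging is cleaner and reusable: it isolates exactly the topological content (closedness of $R$) and delegates the rest to a standard lemma. The paper's direct construction, on the other hand, keeps the argument self-contained and avoids citing the Bourbaki result on closed equivalence relations.

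For continuity of the action, the paper argues with nets and compactness of $T'$ (any cluster point of $(g_n \po t'_n)$ must be $g \po t'$), while you factor the continuous map $(g,t)\mapsto\pi(g\po t)$ through the quotient $\Id_G\times\pi$, using that $G$ locally compact Hausdorff makes this product a quotient map. Your argument is shorter and more categorical; the paper's is more elementary in that it does not appeal to the Whitehead-type lemma on products of quotient maps.

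The convergence argument and the determination of the limit set are essentially identical to the paper's; your sketch of $\La_{T'}\subs\La'$ by lifting and comparing attractors is exactly what the paper's terse ``We also deduce\dots'' leaves implicit (both tacitly use $|T'|\ge 3$, or else the alternative route via $\pi(\Om)\subs\Om_{T'}$, to match attractors).
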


 Note that $\pi : T \ra \tilde T$ is continuous, surjective and $G$-equivariant by construction.

\begin{proof}
 It is standard that $\La'$ is a totally disconnected compactum (see \cite{BouTG} for instance), it is a subspace of $T'$ by construction. The space $T'$ is compact as the quotient of a compact space, we show that it is Hausdorff: let $t_1,t_2$ be distinct points in $T'$.

 Assume first that $t_1 \in \Om$. Note that $A = \pi^{-1} (t_2)$ is a compact subset of $T$ not containing $t_1$. As $T$ is a compactum and as $\Om$ is open in $T$, there exists a compact neighborhood $K$ of $t_1$ in $T$ contained in $\Om$ and such that $K \cap A = \vi$. Note then that $K$ is a neighborhood of $t_1$ in $T'$ and that $T' \sm K$ is a neighborhood of $t_2$ in $T'$, disjoint from $K$.

 Assume now that $t_1,t_2 \in \La'$. As $\La'$ is a totally disconnected compactum there exists a closed and open subset $U$ of $\La'$ such that $t_1 \in U$ and $t_2 \notin U$ (see for instance \cite{EnGT}), we write $V = \pi^{-1} (U)$, it is a closed and open subset of $\La$. As $T$ is a compactum, there exists disjoint open subsets $O_1$ and $O_2$ of $T$ such that $V \subs O_1$ and $\La \sm V \subs O_2$. We check then that $\pi^{-1} ( \pi (O_i)) = O_i$ for $i \in \{ 1,2 \}$, so that $\pi (O_1)$ and $\pi (O_2)$ are disjoint neighborhood of $t_1$ and $t_2$ in $T'$.

\medskip
 
 The map $(g,\pi (t)) \ma \pi (g \po t)$ is well-defined because $\La$ is $G$-invariant and because $G$ acts by homeomorphisms so that it acts on the set of components of $\La$. We easily check that it is an action of $G$ on $\tilde T$.

 Let $g \in G$ and $t' \in T'$ and let $(g_n)_n$ (resp. $(t'_n)_n$) be a net on $G$ (resp. on $T'$) such that $g_n \tend n g$ in $G$ (resp. $t'_n  \tend n t'$ in $T'$). We show that the only cluster point of the net $(g_n \po t'_n)_n$ is $g \po t'$, as $T'$ is compact we have then $g_n \po t'_n \tend n g \po t'$ in $T'$. Let then assume that some subnet $(g_i \po t'_i)_i$ converges in $T'$. There exists a net $(t_i)_i$ on $T$ such that $\pi (t_i) = t'_i$ for all $i$, up to passing to a subnet we can assume that $t_i \tend i t$ where $t \in T$. As $\pi$ is continuous we have $t' = \pi (t)$, and as the action of $G$ on $T$ is continuous, we have $g_i \po t_i \tend i g \po t$ so that $g_i \po t'_i = g_i \po \pi (t_i) = \pi (g_i \po t_i) \tend i \pi (g \po t) = g \po \pi (t) = g \po t'$.

 Let now $(g_n)_n$ be a net on $G$, we assume that there exists $a,b \in T$ satisfying $\clu{g_n}{n}{T}{b}{a}$ and we show that $\clu{g_n}{n}{T'}{\pi (b)}{\pi (a)}$. Let $V \in \Ne_{T'} (\pi (a))$ and let $K$ be a compact subset of $T' \sans {\pi (b)}$. We have $\pi^{-1} (V) \in \Ne_T (a)$ and $\pi^{-1} (K)$ is closed hence compact and contained in $T \sans b$ so that $g_n \po (\pi^{-1} (K)) \subs \pi^{-1} (V)$ for $n$ large enough. As $\pi$ is surjective we deduce that $g_n \po K = g_n \po \pi (\pi^{-1} (K)) = \pi (g_n \po \pi^{-1} (K)) \subs \pi (\pi^{-1} (V)) \subs V$ for $n$ large enough.

 In other words, collapsing nets \enquote{relatively to $T$} are also collapsing \enquote{relatively to $T'$} so that the convergence property for the action on $T$ gives the convergence property for the action on $T'$. We also deduce that the limit set of the convergence action of $G$ on $T'$ is $\La'$, we have the lemma.
\end{proof}

\section{Attractor sum}\label{s5}

 We show in this section that any convergence action induces a quasi-Specker compactification. The main tool that we use in this prospect is the \gr{attractor sum} construction of V. Gerasimov \cite[\S 8]{GeFMRHG}.

\medskip

 Let us fix again a locally compact group $G$ acting with convergence property on a compactum $T$.

 We first define a topology on the union $X = G \sqcup T$ which extends the original topologies of $G$ and $T$, it is the attractor sum. We then show that the space $X$ is a compactum (Proposition \ref{convAS}) and that the right and left translations extend on $X$ as required (Theorem \ref{ASrlc}), this yields Theorem A. We conclude the section with a short proof that the action of $G$ on the attractor sum is convergence (Proposition \ref{ASc}) and a lemma allowing to extend $G$-equivariant maps on attractor sums.

 The fact that the action of $G$ on the compactum $X$ is convergence is already proven in \cite{GeFMRHG}, we believe that our methods using (collapsing) nets give an other interesting point of view on the attractor sum.

\medskip

 We denote with $\po$ both the multiplication in $G$ and the action of $G$ on $T$, so that $g \po x$ makes sense for $x \in G$ or $x \in T$ (where $g \in G$).

 For any net $(g_n)_n$ on $G$ and any subset $H \subs G$, let
 $$\begin{array}{cccccc}
            & \A (g_n)_n & = & \{ u \in T ~|~ \ex v \in T, & \!\! \ex (g_i)_i ~\text{subnet of}~ (g_n)_n: & \!\! \clu{g_i}{i}{T}{v}{u} \} \\
 \text{and} &   \A(H)    & = & \{ u \in T ~|~ \ex v \in T, & \!\! \ex (g_i)_i ~\text{net on}~ H: & \!\! \clu{g_i}{i}{T}{v}{u} \}
   \end{array}$$
 be the sets of \gr{attractor points} of $(g_n)_n$ and $H$ in $T$.

\begin{dfpp}
 Let $X$ be the formal disjoint union $X = G \sqcup T$. We say that a subset $P \subs X$ is closed if $P \cap G$ is closed in $G$, $P \cap T$ is closed in $T$ and $\A (P \cap G) \subs P \cap T$.

 The family of closed subsets induces the structure of a topological space on $X$. We call this space the \gr{attractor sum} of $G$ and $T$ (relatively to the action). In this space $G$ is an open subset and $T$ is a closed subset, moreover the topologies of $G$ and $T$ as subspaces coincide with their initial ones.
\end{dfpp}

The proof is straightforward. Note that the case $|T| = 1$ gives the Alexandroff one-point compactification of $G$. In the case $|T| = 2$, for all $H \subs G$ we have $\A(H) \in \{ \vi, T \}$ and the attractor sum is the Alexandroff compactification with a \enquote{doubled} point at infinity. These two points are closed but not separated in $X$.

 In the following, we assume then that $|T| \geq 3$.

\begin{prp}\label{convAS}\ 
 \begin{itemize}
  \item The attractor sum $X = G \sqcup T$ is a compactum.
  \item For every net $(g_n)_n$ on $G$ and for all $t \in T$, we have $g_n \tend{n} t$ in $X$ if and only if $(g_n)_n$ is wandering and $\A (g_n)_n = \{ t \}$.
 \end{itemize}
\end{prp}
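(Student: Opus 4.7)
The plan is to prove the second bullet first and then derive both compactness and Hausdorff-ness of $X$ from it.

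For the easy direction of the second bullet, assume $(g_n)_n$ is wandering with $\A(g_n)_n = \{t\}$ and let $U \ni t$ be $X$-open. If $g_n \notin U$ frequently a subnet lies in the closed set $G \sm U$ and, still being wandering, admits by the convergence property a collapsing sub-subnet collapsing to some $b$; then $b \in \A(g_n)_n = \{t\}$ forces $t \in \A(G \sm U) \subs T \sm U$ by the closed set characterization of $X \sm U$, contradicting $t \in U$. For the reverse direction, I first rule out any $G$-cluster point $g$ of $(g_n)_n$: a compact neighborhood $K$ of $g$ in $G$ satisfies $\A(K) = \vi$ (no wandering net lives in a compact), so $K$ is closed in $X$ and $X \sm K$ is an $X$-open neighborhood of $t$ that contradicts $g_i \in \int K$ eventually along the cluster subnet. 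Hence $(g_n)_n$ is wandering, and by convergence has a collapsing subnet $(g_i)_i$ collapsing to some $a$ except at some $v$; using $|T| \geq 3$, every subnet of $(g_i)_i$ still collapses with the same attractor $a$ (pick a point outside both exceptional sets to see this), so $\A(g_i)_i = \{a\}$, and it remains to show $a = t$.

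For this last step, I choose distinct $y_1, y_2 \in T \sans v$ and disjoint open neighborhoods $V_t, V_a$ of $t, a$ in $T$, and introduce
$$U = V_t \cup \{g \in G : g \po y_1 \in V_t\} \cup \{g \in G : g \po y_2 \in V_t\}.$$
Then $U$ is $X$-open: for any collapsing net $(h_n)_n$ in $G \sm U$ collapsing to some $s$ except at $w$, at least one of $y_1, y_2$ differs from $w$, and $h_n \po y_j \to s$ together with $h_n \po y_j \notin V_t$ forces $s \notin V_t$. Since $t \in U$ the assumption $g_i \to t$ in $X$ gives $g_i \in U$ eventually, while the collapse $g_i \po y_j \to a \in V_a$ for each $j$ gives $g_i \notin U$ eventually, the desired contradiction unless $a = t$.

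With the second bullet established, compactness is immediate: any net in $X$ has a subnet entirely in $T$ (apply compactness of $T$) or in $G$; in the $G$ case either there is a $G$-cluster point (hence a cluster point of the net in $X$), or a wandering sub-subnet collapses and the converse of the second bullet supplies a limit in $T$. Hausdorff-ness is equally routine when at least one of the two separated points lies in $G$ via the closed-in-$X$ compact neighborhood construction above; for two distinct points of $T$, a net converging to both admits a subnet lying eventually in $T$ (apply Hausdorff-ness of $T$) or in $G$, and in the latter case the second bullet characterizes the limit uniquely via $\A$, forcing the two candidates to coincide. The main obstacle is constructing the $X$-open set $U$ that simultaneously contains $t$ and excludes the competing attractor $a$: the key trick is the use of two test points $y_1, y_2 \neq v$, so that the unique exit point $w$ of any competing collapse coincides with at most one of them, and the remaining $y_j$ can be used to trap the attractor via $h_n \po y_j \to s$.
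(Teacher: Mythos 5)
Your proof is correct, but it takes a genuinely different route from the paper's. The paper proves Hausdorffness of $X$ \emph{first}, via Gerasimov's three-point saturation: for $\xi \subs T$ with $|\xi|=3$ and $A \subs T$ it forms $\xi A = \{ g \in G ~|~ |A \cap g \po \xi| \ge 2 \}$, and the \enquote{at least two out of three} condition is exactly what makes the saturation commute with complements and send disjoint $T$-neighborhoods to disjoint $X$-open sets; Hausdorffness is then used in the only-if direction of the second bullet (for each $r \neq t$ one takes a closed $X$-neighborhood $F_r$ of $t$ avoiding $r$ and deduces $r \notin \A (g_n)_n$). You instead prove the second bullet first, with no appeal to Hausdorffness of $X$: your set $U = V_t \cup \{ g ~|~ g \po y_1 \in V_t \} \cup \{ g ~|~ g \po y_2 \in V_t \}$ is a \enquote{one out of two} saturation of a single neighborhood, whose openness in $X$ needs only continuity of the action plus your two-test-point check of the attractor condition; since you never need two saturations to remain disjoint, two test points suffice where the paper needs three. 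You then recover Hausdorffness for pairs in $T$ from uniqueness of net limits, the other pairs being separated by the compact-neighborhood argument exactly as in the paper; this reordering is legitimate and arguably more economical, but it hinges on the criterion \enquote{a space is Hausdorff if and only if every net has at most one limit} (available in Kelley, consistent with the paper's net-based toolkit), which you should state explicitly rather than leave implicit. One small repair in the only-if direction: you establish $\A (g_i)_i = \{ a \}$ and $a = t$ for the particular collapsing subnet furnished by the convergence property, whereas the statement asserts $\A (g_n)_n = \{ t \}$ for the whole net; since any collapsing subnet of $(g_n)_n$ still converges to $t$ in $X$, your $U$-argument applies verbatim to an arbitrary collapsing subnet and yields the full equality --- phrase it that way (fix $u \in \A (g_n)_n$, pick a collapsing subnet with attractor $u$, and run the same contradiction) and the proof is complete.
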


 The second point gives the two following facts:
\begin{itemize}
 \item[-] if a net $(g_n)_n$ on $G$ and if $t,u \in T$ satisfy $\clu{g_n}{n}{T}{u}{t}$ then $g_n \tend n t$ and $g_n^{-1} \tend n u$ in $X$,
 \item[-] the set of accumulation points of $G$ in $T$ is exactly the limit set of the action.
\end{itemize}

\begin{proof}

We first show that $X$ is Hausdorff, using the same proof as V. Gerasimov \cite[Prop. 8.2.4]{GeFMRHG}. Let $p$ and $q$ be distinct points of $X$, we distinguish three cases:

 \begin{itemize}
  \item Case 1: $p,q \in G$. As $G$ is Hausdorff and open in $X$ we can find disjoint neighborhoods of $p$ and $q$ in $G$ hence in $X$.

  \item Case 2: $p \in G$ and $q \in T$. Let us pick a compact $K \in \Ne_G (p)$, then $K$ is a neighborhood of $p$ in $X$, closed in $X$ because $\A (K) = \vi$, hence $X \sm K$ is a neighborhood of $q$ in $X$ disjoint to $K$.

  \item Case 3: $p,q \in T$. We need some preliminaries.

  Let us fix a subset $\xi \subs T$ of cardinality $3$. For every subset $A \subs T$ let $\xi A = \{ g \in G ~|~ |A \cap g \po \xi| \ge 2 \}$ and $\xi^+ A = A \cup \xi A$. We easily check that $\xi$ and $\xi^+$ commutes with the complement (in $G$ or in $X$ respectively) and that for all disjoint subsets $A$ and $B$ of $G$ the subsets $\xi^+ A$ and $\xi^+ B$ of $X$ are still disjoint. Let $A$ be a closed subset of $T$, then $T \cap \xi^+ A = A$ is closed in $T$. Let $(g_n)_n$ be a net on $G \cap \xi^+ A = \xi A$ converging to a point $g \in G$, we show that $g \in \xi A$, as a consequence $\xi A$ is closed in $G$. For all $n$ we have $|A \cap g_n \po \xi| \ge 2$, as $\xi$ is finite and $A$ is compact, up to passing to a subnet we can assume that there exists distinct points $x_1$ and $x_2$  in $\xi$ such that $g_n \po x_1 \in A$ and $g_n \po x_2 \in A$ for all $n$, and that there exists points $a_1, a_2 \in A$ such that $g_n \po x_1 \tend{n} a_1$ and $g_n \po x_2 \tend{n} a_2$. The action of $G$ on $T$ is continuous so that $g \po x_1 = a_1 \in A$ and $g \po x_2 = a_2 \in A$, hence $g \in \xi A$. Finally, if there exists $u \in \A (\xi A) \sm A$, there exists $v \in T$ and a net $(g_n)_n$ on $\xi A$ such that $\clu{g_n}{n}{T}{v}{u}$, and as $X \sm A \in \Ne_X (u)$, we have $g_n (\xi \sans v) \subs X \sm A$  for $n$ large enough, but $|\xi \sans v| \ge 2$ and $g_n \in \xi A$ for all $n$ gives a contradiction. We deduce that $\A (\xi A) \subs A$ and we conclude that $\xi^+ A$ is closed in $X$.

 Now, if $O_p$ and $O_q$ are disjoint open neighborhoods of $p$ and $q$ in $T$ then $\xi^+ O_p$ and $\xi^+ O_q$ are disjoint open subsets of $X$ containing $p$ and $q$.

 \end{itemize}

 We show now the second point: let $(g_n)_n$ be a net on $G$ and let $t \in T$.

 We first assume that $g_n \tend n t$ in $X$. The net $(g_n)_n$ (seen as a net on $G$) is wandering, by the convergence property we have $\A (g_n)_n \neq \vi$. For $r \in T \sans t$, there exists $O_r \in \Ne_X (r)$ open and such that $F_r = X \sm O_r$ belongs to $\Ne_X (t)$. On one hand $F_r$ is closed in $X$ so that $\A (G \cap F_r) \subs T \cap F_r$ (which does not contain $r$), one the other hand $g_n \tend{n} t$ in $X$ so that $g_n \in G \cap F_r$ for $n$ large enough. We deduce that $r \notin \A (g_n)_n$, and necessarily $\A (g_n)_n = \{ t \}$.

 Conversely, assume that $(g_n)_n$ is wandering and that $\A (g_n)_n = \{ t \}$. For every $O \in \Ne_X (t)$ open, the complement $F = X \sm O$ is closed in $X$ so that $\A (G \cap F) \subs T \cap F$ (which does not contain $t$), hence there is no subnet of $(g_n)_n$ on $G \cap F$. This means exactly that $g_n \in G \sm F \subs O$ for $n$ large enough, so that $g_n \tend {n} t$ in $X$.

\medskip

 We can now show that $X$ is compact. Let $(x_n)_n$ be a net on $X$, up to passing to a subnet, we limit the proof to the three following cases. First case: $x_n \in T$ for all $n$. As $T$ is compact the net has a cluster point in $T$ and it is also a cluster point in $X$. Second case: $x_n \in G$ for all $n$ and the net has a cluster point in $G$. Again, this is a cluster point in $X$. Third case: $x_n \in G$ for all $n$ and the net is wandering. By the convergence property there exists a subnet $(x_i)_i$ and points $u, v \in T$ such that $\clu{x_i}{i}{T}{v}{u}$, by the previous point we have $x_i \tend{i} u$ in $X$. We have shown that every net on $X$ has a cluster point: $X$ is compact.
\end{proof}

\begin{thm}\label{ASrlc}
 Assume that $G$ is a topological group.

 The actions $\fonct{\hat R}
     {G \ti X}        {X}
      {(g,x)}  {\left\{ \begin{array}{ccl}
                x \po g & \text{if} & x \in G \\
                   x    & \text{if} & x \in T \\ \end{array} \right. }$
and $\fonct{\hat L}{G \ti X}{X}{(g,x)}{g \po x}$ are continuous.
\end{thm}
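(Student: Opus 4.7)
I treat $\hat L$ in detail; the argument for $\hat R$ is parallel. Fix a net $(g_n,x_n) \to (g,x)$ in $G \ti X$, and show $\hat L(g_n,x_n) \to \hat L(g,x)$. If $x \in G$, openness of $G$ in $X$ forces $x_n \in G$ eventually, and continuity of multiplication in the topological group $G$ gives $g_n x_n \to g x$ in $G$, hence in $X$. So suppose $x \in T$. Since $X$ is Hausdorff (Proposition \ref{convAS}), it suffices to show that every subnet of $(g_n,x_n)$ admits a further subnet whose image converges to $g \po x$; after refinement I may assume that either $x_n \in T$ for all $n$, or $x_n \in G$ for all $n$. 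In the first case, continuity of the original action on $T$ together with the fact that $T$ carries its original topology as a subspace of $X$ yields $g_n x_n \to g x$.

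The content of the theorem lies in the case $x_n \in G$ for all $n$. By Proposition \ref{convAS}, $x_n \to x$ in $X$ means $(x_n)$ is wandering in $G$ with $\A(x_n)_n = \{x\}$, and I must establish the same for $(g_n x_n)$ with attractor set $\{g x\}$. Wandering of $(g_n x_n)$ transfers from that of $(x_n)$: a convergent subnet of $(g_n x_n)$ to some $h \in G$, multiplied on the left by the convergent net $(g_n^{-1}) \to g^{-1}$, would give a convergent subnet of $(x_n)$. The convergence property then ensures $\A(g_n x_n)_n$ is non-empty. Given any collapsing subnet $\clu{g_i x_i}{i}{T}{v}{u}$, I claim $u = g x$ by verifying $\clu{x_i}{i}{T}{v}{g^{-1} u}$: joint continuity of the action at $(g^{-1},u)$ provides, for a prescribed neighborhood $W$ of $g^{-1} u$, neighborhoods $V \ni g^{-1}$ in $G$ and $U \ni u$ in $T$ with $V \po U \subs W$; then for any compact $K \subs T \sans v$, eventually $g_i^{-1} \in V$ and $(g_i x_i) \po K \subs U$, whence $x_i \po K = g_i^{-1} \po ((g_i x_i) \po K) \subs V \po U \subs W$. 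So $g^{-1} u \in \A(x_n)_n = \{x\}$, forcing $u = g x$.

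The argument for $\hat R$ follows the same outline, except the transfer step now reads: a collapsing subnet $\clu{x_i g_i}{i}{T}{v}{u}$ must yield $\clu{x_i}{i}{T}{g v}{u}$, giving $u = x \in \A(x_n)_n$. To apply the collapsing of $(x_i g_i)$ to a given compact $K \subs T \sans{g v}$, one needs a fixed compact $L \subs T \sans v$ containing $g_i^{-1} \po K$ eventually. This is a tube-lemma-type step: the compact set $g^{-1} \po K$ lies in the open $T \sans v$, so enclose it in such an $L$ with $g^{-1} \po K$ in its interior; then for each $y \in K$, continuity of the action at $(g^{-1},y)$ yields open neighborhoods $V_y \ni g^{-1}$ in $G$ and $O_y \ni y$ in $T$ with $V_y \po O_y$ inside the interior of $L$; a finite subcover of $K$ and intersection of the corresponding $V_y$'s produces a neighborhood $V$ of $g^{-1}$ with $V \po K$ in $L^\circ$, which $g_i^{-1}$ enters eventually. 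This compactness step, rather than the pointwise joint continuity used for $\hat L$, is the main technical point of the proof.
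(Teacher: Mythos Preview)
Your proof is correct and follows the same strategy as the paper: reduce via Proposition~\ref{convAS} to showing that the attractor set of the product net is the appropriate singleton, then transfer any collapsing subnet back to $(x_n)$. The only cosmetic differences are that the paper details $\hat R$ first (leaving $\hat L$ as ``similar'') and obtains the key containment $g_i^{-1} \po K \subs T \sm W_0$ by a contradiction argument rather than your direct tube-lemma construction; the resulting compact $T \sm W_0$ plays exactly the role of your $L$.
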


\begin{proof}
 The two maps are actions of $G$ on $X$ by construction. We only prove the continuity of $\hat R$, the case of $\hat L$ is similar.

 It is enough to show the continuity at a point $(g,t) \in G \ti T$. Let $(g_n)_n$ be a net on $G$ and $(x_n)_n$ be a net on $X$ such that $g_n \tend{n} g$ in $G$ and $x_n \tend{n} t$ in $X$, we show that $\hat R (g_n,x_n) = x_n \po g_n \tend{n} \hat R (g,t) = t$ in $X$.

 Using the partition $\{ n ~|~ x_n \in T \} \sqcup \{ n ~|~ x_n \in G \}$ and noting that $\hat R_{|(G \ti T)}$ is continuous, it is enough to consider the case where $x_n \in G$ for all $n$. By the previous proposition the net $(x_n)_n$ is wandering and satisfies $\A (x_n)_n = \{ t \}$, we want to show that the net $(x_n \po g_n)_n$ is wandering and has only one attractor point, namely $t$.

\medskip

 By contradiction, assume that there exists a subnet $(x_i \po g_i)_i$ converging in $G$ to a point $h \in G$. As $g_i \tend{i} g$ in $G$ we have $g_i^{-1} \tend{i} g^{-1}$ in $G$ so that $x_i = (x_i \po g_i) \po g_i^{-1} \tend{i} h \po g^{-1}$ in $G$ hence in $X$, this is in contradiction with $x_i \tend{i} t \in T$. We deduce that the net $(x_n \po g_n)_n$ is wandering.

 Let then $u,v \in T$ and $(x_i \po g_i)_{i \in I}$ be a subnet such that $\clu{(x_i \po g_i)}{i}{T}{v}{u}$, we show that $\clu{x_i}{i}{T}{g \po v}{u}$ so that necessarily $u = t$. Let then $K$ be a compact subset of $T \sans {g \po v}$ and $V \in \Ne_T u$.

 We first claim that there exists a neighborhood $W_0 \in \Ne_T (v)$ such that $g_i^{-1} \po K \subs T \sm W_0$ for $i$ large enough. Assume that this is not true: for all $W \in \Ne_T (v)$ and for all $i \in I$, there exists then $j = j (i,W) \ge i$ and $k_j \in K$ such that $g_j^{-1} \po k_j \in W$. The map $(g_j^{-1} \po k_j)_{(i,W)}$ is then a net on $T$, converging to $v$ by construction. As $K$ is compact, some subnet $(k_{j'})_{j'}$ of $(k_j)_{(i,W)}$ converges to a point $k \in K$ distinct from $g \po v$. But the action of $G$ on $T$ is continuous so that the net $(g_{j'}^{-1} \po k_{j'})_{j'}$ converges to $g^{-1} \po k \neq v$, a contradiction.

 Without loss of generality, we can assume that $T \sm W_0$ is a compact subset of $T \sans v$. As $\clu{(x_i \po g_i)}{i}{T}{v}{u}$, for $i$ large enough we have $x_i \po K = (x_i \po g_i) \po (g_i^{-1} \po K) \subs (x_i \po g_i) \po (T \sm W_0) \subs V$. We have the stated result, we deduce that $\A (x_n \po g_n)_n = \{ t \}$.

\end{proof}

 We can now prove Theorem A:

\begin{thm}\label{thmB}
 If $G$ is a locally compact convergence group acting on a compactum $T$ containing at least three points then the closure of $G$ in the attractor sum of $G$ and $T$, endowed with the restriction of the $\hat L$-action defined previously, is a quasi-Specker compactification of $G$ (see the introduction or Definitions \ref{dSc}).
\end{thm}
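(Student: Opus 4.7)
The plan is to show that $Y := \overline{G}^X$, the closure of $G$ in the attractor sum $X = G \sqcup T$, satisfies all the axioms of a quasi-Specker compactification by inheriting them from the structure already built on $X$. The heavy lifting has been done in Proposition~\ref{convAS} and Theorem~\ref{ASrlc}; what remains is bookkeeping.

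First I would identify $Y$ as a set. Points of $G$ are interior to $G$ in $X$, so $G \subseteq Y$ is automatic. A point $t \in T$ lies in $Y$ if and only if $t$ is an accumulation point of $G$ in $X$, i.e.\ there exists a net $(g_n)_n$ on $G$ with $g_n \tend{n} t$ in $X$. By the second bullet of Proposition~\ref{convAS}, this happens exactly when $(g_n)_n$ is wandering with $\A(g_n)_n = \{t\}$, and the subsequent remark identifies the set of such accumulation points with the limit set $\La$. Thus $Y = G \sqcup \La$ as sets.

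Next I would verify the structural properties one by one.
\textbf{Compactness and Hausdorffness.} $Y$ is closed in $X$ by definition, and $X$ is a compactum by Proposition~\ref{convAS}, so $Y$ is a compactum as well.
\textbf{$G$ dense and open in $Y$.} Density of $G$ is immediate from $Y = \overline{G}$. Since $G$ is open in $X$ (stated in the definition of the attractor sum), it is a fortiori open in the subspace $Y$.
\textbf{Continuity of $\hat R$.} The map $\hat R: G \ti X \ra X$ from Theorem~\ref{ASrlc} is continuous. It sends $G \ti G$ into $G \subseteq Y$ by right multiplication, and fixes points of $T$, in particular sending $G \ti \La$ into $\La \subseteq Y$. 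Hence $\hat R$ restricts to a continuous map $G \ti Y \ra Y$, which is precisely the map required in the definition of quasi-Specker compactification.
\textbf{Extension of left multiplications.} For $g \in G$, Theorem~\ref{ASrlc} provides a continuous map $\hat L_g = \hat L(g, \po) : X \ra X$ extending $h \mapsto g \po h$. To ensure this restricts to $Y$, I only need $\hat L_g(Y) \subseteq Y$. On $G$ this is clear, and on $\La$ it follows from the $G$-invariance of $\La$ established in Proposition~\ref{limitset}. Thus the restriction $\hat L_g|_Y : Y \ra Y$ is a continuous extension of $L_g$.

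The only step that demands a moment of care is the set-theoretic identification $Y = G \sqcup \La$, because it is what allows $G$-invariance of $\La$ to be invoked for the left multiplications; everything else is a direct restriction of the already-constructed continuous maps on $X$. I do not expect a genuine obstacle: the work was done upstream in setting up the attractor sum topology and in proving Theorem~\ref{ASrlc}, and the statement of Theorem~\ref{thmB} is essentially a corollary of those results together with Proposition~\ref{limitset}.
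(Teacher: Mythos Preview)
Your proposal is correct and follows essentially the same route as the paper: identify $\overline G = G \sqcup \La$ via Proposition~\ref{convAS}, then restrict the continuous maps $\hat R$ and $\hat L$ from Theorem~\ref{ASrlc} using the $G$-invariance of $\La$ from Proposition~\ref{limitset}. You are simply more explicit in checking each axiom separately, whereas the paper compresses the verification into a couple of sentences.
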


 We will use the following corollary:

\begin{cor}\label{corthmB}
 Let $G$ be a locally convergence group acting on a compactum $T$ containing at least three points with a totally disconnected limit set. The closure of $G$ in the attractor sum of $G$ and $T$ is a (complete) Specker compactification of $G$.
\end{cor}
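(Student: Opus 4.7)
The plan is to deduce this corollary directly from Theorem \ref{thmB} together with the explicit description of the closure of $G$ in the attractor sum supplied by Proposition \ref{convAS}. Since $|T| \ge 3$, Theorem \ref{thmB} already gives that $\overline{G} \subs X$, equipped with the restriction of $\hat L$, is a quasi-Specker compactification of $G$. The only thing left to verify is the extra condition in the definition of a Specker compactification, namely that the boundary $\overline{G} \sm G$ is totally disconnected.

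To identify this boundary I would invoke the remark following Proposition \ref{convAS}: the set of accumulation points of $G$ in $T$, viewed inside the attractor sum $X = G \sqcup T$, is exactly the limit set $\La$ of the convergence action. Combined with the fact that $G$ is open in $X$ (so no point of $G$ is an accumulation point of $G$ in $X$ other than points already in $G$), this yields $\overline{G} = G \sqcup \La$ as a subset of $X$, so the boundary of the closure is precisely $\La$.

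By hypothesis $\La$ is totally disconnected, hence the boundary of the quasi-Specker compactification $\overline{G}$ is totally disconnected, which is exactly the additional condition upgrading quasi-Specker to Specker. The parenthetical \enquote{complete} just records the fact that the compactification produced this way is the natural one sitting inside the attractor sum, carrying the full $\hat L$-action from Theorem \ref{ASrlc}, rather than being some further quotient.

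There is no real obstacle here: both inputs (Theorem \ref{thmB} and Proposition \ref{convAS}) have already been established, and the argument is a one-line bookkeeping once the boundary is recognized as $\La$. The only point that requires a brief justification is why \enquote{accumulation points of $G$ in $X$} equal \enquote{accumulation points of $G$ in $T$}, which follows immediately from $G$ being open in $X$ and from the characterization of convergence in $X$ given by the second bullet of Proposition \ref{convAS}.
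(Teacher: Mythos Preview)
Your argument is correct and follows the same route as the paper: invoke Theorem \ref{thmB} to obtain a quasi-Specker compactification, use Proposition \ref{convAS} to identify $\overline{G}\sm G$ with $\La$, and then conclude from the total-disconnectedness hypothesis. One small remark: the parenthetical \enquote{(complete)} in the statement simply means \enquote{Specker rather than merely quasi-Specker}, i.e.\ that point 3 of Definitions \ref{dSc} now holds; it is not an additional structural claim about the $\hat L$-action.
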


\begin{proof}
 Let us write $\bar G$ the closure of $G$ in the attractor sum $X$ of $G$ and $T$. As $X$ is a compactum so is $\bar G$, and it contains $G$ as a dense subset by definition. As a set $\bar G$ is the disjoint union of $G$ and the limit set $\La$ by proposition \ref{convAS}. As $\La$ is $G$-equivariant the maps $\hat L$ and $\hat R$ restrict to maps $G \ti \bar G \ra \bar G$ which are continuous and extend the multiplications of $G$.

 As the third point of Definitions \ref{dSc} only depends on the topology of the boundary, we get the corollary.
\end{proof}

 Note that by Lemma \ref{qcc} any convergence action induces a convergence action with a totally disconnected limit set, hence a Specker compactification 

 Note also that Theorems \ref{ASrlc} and \ref{thmB} are trivially true if $|T| = 1$, if $|T| = 2$ we still have Theorem \ref{ASrlc} (in the attractor sum, every wandering net on $G$ converges to any point in $T$, and any non wandering net has all its cluster points in $G$).

\medskip

 We now give a short proof of \cite[8.3]{GeFMRHG}:

\begin{prp}\label{ASc}
 The action of $G$ on the attractor sum $X$ of $G$ and $T$ is convergence.
\end{prp}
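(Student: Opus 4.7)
The plan is to take an arbitrary wandering net $(g_n)_n$ on $G$ (being wandering for $\hat L$ on $X$ is equivalent to being wandering in $G$, since $G$ is open in $X$), use the convergence hypothesis of $G$ on $T$ to extract a subnet $(g_i)_i$ with $\clu{g_i}{i}{T}{v}{u}$ for some $u,v \in T$, and then prove that this same subnet collapses in $X$ to $u$ except at $v$. Proposition \ref{convAS} will serve as my main lever: it already provides $g_i \tend{i} u$ and $g_i^{-1} \tend{i} v$ in $X$, and it reduces verifying that a wandering net in $G$ converges to $u$ in $X$ to computing its attractor set in $T$.

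I would proceed by contradiction. If $(g_i)_i$ does not collapse in $X$, one can extract a subnet and points $k_i \in K$ with $g_i \po k_i \notin U$ for some compact $K \subs X \sans v$ and open neighborhood $U$ of $u$; compactness of $K$ allows me to further arrange $k_i \tend{i} k \in K$, so $k \neq v$. A further subnet reduces to either all $k_i \in T$ or all $k_i \in G$. If $k_i \in T$, then a compact neighborhood $L$ of $k$ in $T \sans v$ eventually contains $k_i$, and the $T$-collapse gives $g_i \po k_i \in U \cap T \subs U$, a contradiction. If $k_i \in G$ with $k \in G$, then $k_i \tend{i} k$ in $G$ and $g_i \tend{i} u$ in $X$, so continuity of $\hat R$ (Theorem \ref{ASrlc}) gives $g_i \po k_i = \hat R(k_i, g_i) \tend{i} \hat R(k, u) = u$, again a contradiction.

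The remaining case, which I expect to be the main obstacle, is $k_i \in G$ with $k \in T \sans v$: by Proposition \ref{convAS}, $(k_i)_i$ is wandering in $G$ with $\A(k_i)_i = \{k\}$, so I pass to a further subnet collapsing as $\clu{k_j}{j}{T}{k'}{k}$ for some $k'$. The plan is then to invoke Proposition \ref{convAS} a second time to conclude $g_j \po k_j \tend{j} u$ in $X$, contradicting $g_j \po k_j \notin U$. This requires showing that $(g_j k_j)_j$ is wandering in $G$ with $\A(g_j k_j)_j = \{u\}$. For wandering, any hypothetical subnet limit $g_l k_l \tend{l} h \in G$ would force, via continuity of $\hat R$, $k_l = \hat R(g_l k_l, g_l^{-1}) \tend{l} \hat R(h, v) = v$, contradicting $k_l \tend{l} k \neq v$. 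For the attractor, any $u'' \in \A(g_l k_l)$ comes with $v''$ from a collapse $\clu{g_l k_l}{l}{T}{v''}{u''}$; picking a test point $x \in T \sans{v'', k'}$ (available because $|T| \ge 3$), on one side $g_l k_l \po x \tend{l} u''$, and on the other side the two collapses chain: $k_l \po x \tend{l} k$ sits eventually in a compact neighborhood of $k$ in $T \sans v$, so the $g$-collapse yields $g_l \po (k_l \po x) \tend{l} u$. Hausdorffness of $T$ then forces $u'' = u$, which completes the argument; the delicate point is precisely this chaining of two collapsing nets while keeping the test point away from both exceptional values $v''$ and $k'$.
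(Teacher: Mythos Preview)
Your proposal is correct and follows essentially the same approach as the paper's proof: set up the contradiction with $K$, $U$, and a net $(k_i)$ in $K$ whose images miss $U$; split into the cases $k_i \in T$, $k_i \in G$ with limit in $G$ (handled via $\hat R$), and $k_i \in G$ with limit $k \in T \sans v$; and in the last case chain the two collapses through a test point in $T$ avoiding the two exceptional values. The only cosmetic difference is that the paper extracts a cluster point $y$ of $(g_i \po k_i)$ in $X$ at the outset (using compactness of $X$), shows $y \in T$ by the same $\hat R$ trick you use for wandering, and then reaches the contradiction $y = u$ directly; this spares the separate verification of ``wandering plus $\A = \{u\}$'' via Proposition~\ref{convAS}, but the content is identical.
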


\begin{proof}
 Let $(g_n)_n$ be a wandering net on $G$, we can assume that there exists $u,v \in T$ such that $\clu{g_n}{n}{T}{v}{u}$ (we have then $g_n \tend n u$ in $X$, see Proposition \ref{convAS}). We claim that $\clu{g_n}{n}{X}{v}{u}$. 

 Arguing by contradiction, assume that this is not the case. There exists then a compact subset $K \subs X \sans v$, a neighborhood $U \in \Ne_X (u)$, a subnet $(g_i)_i$ and a net $(x_i)_i$ on $K$ such that $g_i \po x_i \notin U$ for all $i$. We can assume that there exists a point $x \in K \subs X \sans v$ and a point $y \in X \sans u$ such that $x_i \tend i x$ and $g_i \po x_i \tend i y$ in $X$.

 Assume first that $x \in G$. We have then $x_i \in G$ for $i$ large enough hence $x_i \tend i x$ in $G$, but $g_i \tend i u$ in $X$ and the map $\hat R$ is continuous so that  $g_i \po x_i = \hat R (x_i, g_i) \tend i \hat R (x,u) = u \neq y$ in $X$, a contradiction.

 We deduce that $x \in T$, and as $g_i^{-1} \tend i v$ in $X$ the same argument applied to $x_i = g_i^{-1} \po (g_i \po x_i)$ gives $y \in T$.

 Up to passing to a subnet, we can assume either that $x_i \in T$ for all $i$ or that $x_i \in G$ for all $i$. The first case contradicts the assumption $\clu{g_i}{i}{T}{v}{u}$, and in the second case, we can assume that there exists $\la, \mu \in T$ such that $\clu{x_i}{i}{T}{\la}{x}$ and $\clu{g_i \po x_i}{i}{T}{\mu}{y}$. Let us pick a point $p \in T \sans {\la, \mu}$, on one hand we have $(g_i \po x_i) \po p  \tend i y$ because $p \neq \mu$ and on a other hand we have $x_i \po p \tend i x \neq v$ so that $g_i \po (x_i \po p) \tend i u$, once more a contradiction.
\end{proof}

 We will need the following result in section \ref{s7}, it has already been proved in the case of a discrete group \cite[Lemma 5.3]{GeLPSRHAG}.

\begin{lem}\label{cGeAS}
 Let $G$ act with convergence property on two compacta $M$ and $N$ and let $f: M \ra N$ be a non-constant continuous $G$-equivariant map. If $|M| \geq 3$ then the map $\tilde f: G \sqcup M \ra G \sqcup N$ extending $f$ on the attractor sums by the identity on $G$ is continuous.
\end{lem}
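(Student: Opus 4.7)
I verify continuity of $\tilde f$ pointwise using nets. Since $G$ is open in both attractor sums and $\tilde f$ restricts to the identity on $G$, continuity at any $g \in G$ is immediate. So fix $x \in M$ and a net $(x_\alpha)_\alpha$ in $G \sqcup M$ with $x_\alpha \tend{\alpha} x$, and show $\tilde f(x_\alpha) \tend{\alpha} f(x)$ in $G \sqcup N$. Working up to subnets, I split into two cases. If $x_\alpha \in M$ for all $\alpha$, then $x_\alpha \tend{\alpha} x$ in $M$ (subspace topology), and continuity of $f$ gives $f(x_\alpha) \tend{\alpha} f(x)$ in $N$, hence in $G \sqcup N$.

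\textbf{Substantive case.} Suppose instead that $x_\alpha = g_\alpha \in G$ for all $\alpha$. By Proposition \ref{convAS}, $(g_\alpha)_\alpha$ is wandering and $\A(g_\alpha)_\alpha = \{x\}$, computed in $M$. I want the analogous equality in $N$: the set of attractor points in $N$ of $(g_\alpha)_\alpha$ should equal $\{f(x)\}$ (the wandering condition being automatic). By the convergence property of the action on $N$, this set is non-empty; let $u' \in N$ belong to it, and extract a subnet $(g_\delta)_\delta$ collapsing in $N$ to $u'$ except at some $v' \in N$. Applying the convergence property of the action on $M$ to this subnet, a further subnet -- still denoted $(g_\delta)_\delta$ -- collapses in $M$ to some $u \in M$ except at some $v \in M$; since $(g_\delta)_\delta$ is a subnet of $(g_\alpha)_\alpha$, necessarily $u = x$.

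\textbf{Using equivariance.} The plan is to produce a point $q \in M$ with $q \neq v$ and $f(q) \neq v'$. Given such $q$, the $M$-collapse yields $g_\delta \po q \tend{\delta} x$ in $M$; continuity of $f$ and $G$-equivariance then give $g_\delta \po f(q) = f(g_\delta \po q) \tend{\delta} f(x)$ in $N$. On the other hand, $\{f(q)\}$ is a compact subset of $N \sans{v'}$, so the $N$-collapse gives $g_\delta \po f(q) \tend{\delta} u'$. Hausdorffness of $N$ then forces $u' = f(x)$, which is exactly what we needed.

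\textbf{Main obstacle: existence of $q$.} The crux is producing such $q$. Suppose none exists, so $M \subs \{v\} \cup f^{-1}(v')$. Since $f$ is non-constant, $f^{-1}(v') \neq M$; hence $f(v) \neq v'$ and $f^{-1}(v') = M \sans{v}$. The $M$-collapse of $(g_\delta)_\delta$ to $x$ except at $v$ gives (by the facts recorded after Proposition \ref{convAS}) $g_\delta^{-1} \tend{\delta} v$ in $G \sqcup M$, so $v \in \La_M$. Because $|M| \ge 3$, the argument at the start of the proof of Proposition \ref{limitset} shows that every point of $\La_M$ is non-isolated in $M$, so there is a net $(p_\beta)_\beta$ in $M \sans{v}$ with $p_\beta \tend{\beta} v$. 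Then $f(p_\beta) = v'$ for all $\beta$, and continuity of $f$ at $v$ forces $f(v) = v'$, contradicting $f(v) \neq v'$. Both hypotheses of the lemma -- $f$ non-constant and $|M| \ge 3$ -- are used essentially at this step, which is the only genuine difficulty in the proof.
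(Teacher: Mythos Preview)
Your proof is correct and follows essentially the same route as the paper's. Both arguments reduce to a net $(g_\alpha)$ in $G$ converging to a point of $\La_M$, extract simultaneous collapsing subnets in $M$ and $N$, and then produce a point $q\in M$ with $q\neq v$ and $f(q)\neq v'$ to identify the $N$-attractor with $f(x)$; the existence of $q$ is obtained in both cases from non-constancy of $f$ together with the fact that the $M$-repeller $v$ is non-isolated (using $|M|\ge 3$). The only differences are organizational: the paper isolates the implication ``$\clu{g_n}{n}{M}{m_r}{m_a}$ and $\clu{g_n}{n}{N}{n_r}{n_a}$ $\Rightarrow$ $n_a=f(m_a)$'' as a separate preliminary claim (and also records $n_r=f(m_r)$, which you do not need), whereas you fold it directly into the continuity argument.
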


 Note that necessarily $|N| \geq 3$.

\begin{proof}
 Under the hypotheses of the lemma we first show the following: if $m_a, m_r \in M$, if $n_a, n_r \in N$ and if $(g_n)_n$ is a wandering net on $G$ such that $\clu{g_n}{n}{M}{m_r}{m_a}$ and $\clu{g_n}{n}{N}{n_r}{n_a}$ then $n_a = f (m_a)$ and $n_r = f (m_r)$.

 Note first that $m_r$ is not isolated in $M$. Indeed, it follows from the definition of a collapsing net that $\clu{g_n^{-1}}{n}{M}{m_a}{m_r}$, as $|M| \geq 3$ we can find $x,y \in M$ such that $|\{ x,y, a \}| = 3$, we have then $g_n^{-1} \po x, g_n^{-1} \po y \tend n m_a$, and $g_n^{-1} \po x \neq g_n^{-1} \po y$ for all $n$.

 Note then that $f^{-1} (n_r) \cup \{ m_r \} \subsn M$. If not, as $m_r$ is not isolated in $M$ we can find a net $(m_n)_n$ on $M$ converging to $m_r$ and such that $m_n \neq m_r$ for all $n$, by assumption $f (m_n) = n_r$ for all $n$, the continuity of $f$ gives then $f (m_r) = n_r$ so that $f$ is constant, a contradiction.

 We can then take $m \in M \sm ( f^{-1} (n_r) \cup \{ m_r \} )$, on one hand $m \neq m_r$ so that $g_n \po m \tend n m_a$ and $f (g_n \po m) \tend n f (m_a)$, on an other hand $f(m) \neq n_r$ so that $g_n \po f(m) \tend n n_a$, but $g_n \po f (m) = f (g_n \po m)$ for all $n$ so that $n_a = f (m_a)$.

 As $\clu{g_n^{-1}}{n}{M}{m_a}{m_r}$ and $\clu{g_n^{-1}}{n}{N}{n_a}{n_r}$ the same proof gives $n_r = f (m_r)$.

 We can now prove the lemma. Easily, $\tilde f$ is continuous on the open subsets $G$ and $\Om_M$ of $M$. Using the same techniques as in the proofs of Theorem \ref{ASrlc} and Proposition \ref{convAS}, we prove that if $(x_n)_n$ is a net on $G \sqcup M$ converging to a point $\la \in \La_M$, then $\tilde f (x_n) \tend n \tilde f (\la) = f (\la)$. It is enough to consider the case when $x_n \in G$ for all $n$, the net $(x_n)_n$ is then wandering and satisfies $\A_M (x_n)_n = \{ \la \}$. If $n_a, n_r \in N$ and if $(x_i)_i$ is a subnet satisfying $\clu{x_i}{i}{N}{n_r}{n_a}$, we can assume that there exists $m_a, m_r \in M$ such that $\clu{x_i}{i}{M}{m_r}{m_a}$, necessarily $m_a = \la$ and the previous result gives then $n_a = f (\la)$. We deduce that $\A_N (x_n)_n = \{ f (\la) \}$ and the lemma.
\end{proof}

\section{Specker Compactifications}\label{s6}

 The aim of this section is to recall Abels' classification of the Specker compactifications of a given group \cite[Satz 1.6 and 1.8]{AbSK} (our Proposition \ref{class}), this yields the definition of the space of ends. We conclude the section with the proof of Corollaries B and C.

 Our methods in the following sections use a similar machinery, in order the paper to be self-contained we provide some details here.

\medskip

 In the whole section we fix a locally compact topological group $G$.

\begin{defis}\label{dSc}
 A topological space $X$ is a \gr{Specker compactification} of $G$ if it satisfies the following properties:
  \begin{enumerate}
   \item $X$ is a compactum,
   \item $G$ is homeomorphic to a dense open subset of $X$ (we will identify $G$ and this subset),
   \item the \enquote{boundary} $\La = X \sm G$ is totally disconnected (its only connected subsets are the singletons),
    \item the map $\fonct {\hat R} {G \ti X} {X} {(g,x)}{\left\{ \begin{array}{ccl}
                x \po g & \text{if} & x \in G \\
                   x    & \text{if} & x \in X \sm G \\ \end{array} \right. }$ is continuous,
                  
    \item for all $g \in G$, the map $\fonct{L_g}{G}{G}{h}{g \po h}$ extends to a continuous map $\hat {L_g} : X \ra X$.
   \end{enumerate}

 A topological space $X$ satisfying the three first points (related only to the topological structure of $G$) is a \gr{zero-dimensional compactification} of $G$. A topological space $X$ satisfying all points but the third is a \gr{quasi-Specker compactification} of $G$.

 Two compactifications $X$ and $Y$ of $G$ are \gr{equivalent} if the identity map $G \ra G$ extends to a homeomorphism $X \ra Y$.
\end{defis}

\begin{notas}\label{nots26}\ 
 \begin{itemize}
  \item For all $A,B \subs G$, let $A + B = (A \sm B) \cup (B \sm A)$ be the symmetric difference of $A$ and $B$.
  \item If $A$ is a subset of a topological space $X$, we write $\d_X A$ the boundary of $A$ for the topology on $X$.
  \item Let $\B$ (resp. $\K$) be the family of all bounded (resp. compact) subsets of $G$.
  \item Let $\U^+ = \{ A \subs G ~|~ \d_G A \in \K \}$. Endowed with symmetric difference and intersection $\U^+$ is a Boolean algebra (with identity elements $0 = \vi$ for $+$ and $1 = G$ for $\cap$) in which $\B$ is an ideal.
  \item Let $\U^G = \{ A \in \U^+ ~|~ \fa K \in \K \sm \{ \vi \} , ~(A \po K) + A \in \B \}$, this is a subalgebra of $\U^+$ containing $\B$ and a $G$-module for the left action of $G$.
 \end{itemize}
\end{notas}

\begin{prp}\label{class}\ 
 The map $X \ma \U (X) = \{ B \cap G ~|~ B \subs X ~\text{and}~ \d_X B \subs G \}$ induces a bijection between equivalence classes of zero-dimensional compactifications of $G$ and subalgebras of $\U^+$ containing $\B$. In this correspondance the (equivalence classes of) Specker compactifications of $G$ correspond exactly to the subalgebras of $\U^G$ containing $\B$ which are also $G$-submodules of $\U^G$.
\end{prp}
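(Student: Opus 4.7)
The statement is a Stone-type duality: the Boolean algebra $\U^+ / \B$ parametrises the zero-dimensional compactifications of $G$, and the $G$-submodule condition isolates the Specker ones. I would exhibit the bijection both ways and then treat the Specker refinement.

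\textbf{Forward direction.} Given a zero-dimensional compactification $X$, I first check that $\U (X)$ is a subalgebra of $\U^+$ containing $\B$. If $B \subs X$ satisfies $\d_X B \subs G$, then $A = B \cap G$ has $\d_G A \subs \d_X B \in \K$, so $A \in \U^+$. The inclusion $\B \subs \U (X)$ follows from the fact that every bounded subset of $G$ has its $X$-closure in $G$, so that $K \subs X$ already satisfies $\d_X K \subs G$. Closure of $\U (X)$ under $\cap$ and $+$ comes from the inclusions $\d_X (B_1 \cap B_2) \subs \d_X B_1 \cup \d_X B_2$ and the analogue for symmetric difference.

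\textbf{Inverse construction and bijection.} Given a subalgebra $\A$ of $\U^+$ containing $\B$, I form the quotient Boolean algebra $\A / \B$ and its Stone space $S$ (ultrafilters on $\A / \B$, with the topology generated by the clopens $S_A = \{ \mathfrak u \in S \mid [A] \in \mathfrak u \}$ for $A \in \A$). I then set $X_\A = G \sqcup S$ and equip it with the topology generated by the open subsets of $G$ together with the sets $\tilde A = (A \sm K) \sqcup S_A$ for $A \in \A$ and $K \in \K$. I verify that $X_\A$ is a zero-dimensional compactification: $G$ is open and dense (no ultrafilter of $\A / \B$ is principal); $X_\A$ is compact (finite intersection property for ultrafilters combined with compactness of bounded subsets of $G$); $X_\A$ is Hausdorff ($\A$ separates ultrafilters, and bounded subsets have compact closure in $G$); and $S$ is totally disconnected by construction. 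One then computes directly that $\U (X_\A) = \A$, while in the other direction the natural map $X_{\U (X)} \ra X$ sending each ultrafilter to its unique accumulation point in $X \sm G$ — well-defined by compactness of $X$ and the finite intersection property — provides the inverse, yielding the claimed bijection.

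\textbf{Specker characterization and principal obstacle.} Continuity of $\hat R$ at a boundary point $x$ with $[A] \in x$ reduces to finding a compact neighborhood $V$ of the identity in $G$ such that $g \in A$ implies $g \po v \in A$ for $v \in V$, modulo a bounded exceptional set; this is exactly the condition $(A \po K) + A \in \B$ defining $\U^G$. Similarly, each $L_g$ extends to a continuous self-map of $X$ exactly when $g \po A \in \U (X)$ for every $A \in \U (X)$, i.e.\ when $\U (X)$ is a $G$-submodule of $\U^G$. The main technical difficulty is the verification that $X_\A$ is compact and Hausdorff while its subspace topology on $G$ coincides with the original one; the freedom to vary the compact set $K$ in each basic open $\tilde A = (A \sm K) \sqcup S_A$ is essential to prevent bounded subsets of $G$ from accumulating at the boundary.
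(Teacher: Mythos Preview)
Your approach is essentially the same as the paper's: both set up Stone duality between zero-dimensional compactifications and Boolean subalgebras of $\U^+$ containing $\B$, realising the boundary as the space of unbounded ultrafilters (equivalently, ultrafilters on $\A/\B$), and both read off the Specker conditions as $\U(X)\subseteq\U^G$ (from continuity of $\hat R$) together with $G$-invariance (from existence of the $\hat L_g$).

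There is, however, a technical slip in your inverse construction. You declare the basic opens of $X_\A$ to be the open subsets of $G$ together with all $\tilde A=(A\setminus K)\sqcup S_A$ for $A\in\A$ and $K\in\K$. Since $\B\subseteq\A$, taking $A=\{g\}$ and $K=\varnothing$ gives $\tilde A=\{g\}$, so every singleton of $G$ becomes open and the subspace topology on $G$ is discrete, which is wrong when $G$ is not discrete. The paper avoids this by restricting to \emph{open} $A\in\U\cap\O$ and setting $A_+=A\cup\{\F:A\in\F\}$; then separation of a boundary point from a compact $K\subseteq G$ uses $(G\setminus K)_+$, so the extra parameter $K$ in your $\tilde A$ is unnecessary once $A$ is required to be open.

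A second point worth making explicit: the paper isolates as its key lemma that $\{B\subseteq X:\d_X B\subseteq G\}$ is a local base at every $x\in X$, proved from total disconnectedness of $\Lambda$ and normality of $X$. You need exactly this to show that the map $X_{\U(X)}\to X$ (ultrafilter $\mapsto$ accumulation point) is injective on the boundary: compactness and the finite intersection property give existence of an accumulation point, but uniqueness requires that distinct points of $\Lambda$ are separated by sets with boundary in $G$, which is the content of that lemma. Your sketch invokes ``$\A$ separates ultrafilters'' for the constructed space $X_\A$, but the analogous separation in an \emph{arbitrary} zero-dimensional compactification $X$ is where total disconnectedness of $\Lambda$ actually enters.
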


\begin{proof}[Sketch of the proposition's proof]
 Let us first fix a zero-dimensional compactification $X$ of $G$, we write $\La = X \sm G$. The key fact to get the classifications is the following lemma:

\begin{lem}\label{blc}
 For all $x \in X$, the family $\W_x = \{ B \subs X ~|~ x \in B \et \d_X B \subs G \}$ is a local basis at $x$.
\end{lem}

 As $G$ is locally compact, the case $x \in G$ is easy. Assume $x \in \La$. As $\La$ is compact and totally disconnected it has a base of closed and open subsets, and as $X$ is a compactum it is normal (i.e. disjoint closed subsets have disjoint neighborhoods), see for instance \cite{EnGT}. It is then an exercise to show the lemma.

\medskip

 Let $\U (X)$ be as in the statement of the proposition, we easily check that it is a subalgebra of $\U^+$ containing $\B$.

 A subfamily $\F$ of $\U(X)$ is an \gr{unbounded ultrafilter in $\U(X)$} if it satisfies the following properties :
 \begin{itemize}
  \item[(F0)] $X \in \F$ (or $\F \neq \vi$),
  \item[(F1)] for all $A,B \in \F$ we have $A \cap B \in \F$,
  \item[(F2)] for all $A,B \in \U(X)$, if $A \in \F$ and $A \subs B$ then $B \in \F$,
  \item[ (U)] for all $A \in \U(X)$, either $A \in \F$ or $G \sm A \in \F$,
  \item[(NB)] $\F \cap \B = \vi$.
 \end{itemize}

 Using the previous lemma we can show that for all $\la \in \La$, the subfamily $\F_{\la} = \{ B \cap G ~|~ B \in \W_{\la} \}$ of $\U (X)$ is an unbounded ultrafilter in $\U (X)$. Moreover, we can show that the map $\la \ma \F_{\la}$ is a bijection between  $\La$ and the set of unbounded ultrafilters in $\U (X)$. We have then characterized points of $\La$ only in terms of the subalgebra $\U (X) \subs \U^+$.

 Up to taking complement in $\P (\U (X))$ it is equivalent to consider prime ideals instead of ultrafilters (see \cite{AbSK}). Our choice is more consistant with section \ref{s8}.

\medskip

Let now $\U$ be any subalgebra of $\U^+$ containing $\B$. Following the previous results, let $\d_\U G$ be the set of unbounded ultrafilters in $\U$ and let $X (\U) = G \sqcup \d_\U G$. Let $\O$ denote the set of all open subsets of $G$, for $A \in \U \cap \O$ we define $A_+ = A \cup \{ \F \in \d_\U G ~|~ A \in \F \}$, let then $\U^\O_+$ be the set of all $A_+$ for $A \in \U \cap \O$. The family $\O \cup \U^\O_+$ is then a base for a topology on $X (\U)$ satisfying all properties of a zero-dimensional compactification of $G$ (see \cite[Satz 1.6]{AbSK} for more details).

 It is then enough to check that $\U (X (\U)) = \U$ and that $X$ and $X (\U (X))$ are equivalent for any zero-dimensional compactification $X$ of $G$ in order to get the first part of the proposition.

\medskip

 Now, let us fix a Specker compactification $X$ of $G$, we set again $\La = X \sm G$.

 If $A = B \cap G$ where $B \subs X$ satisfies $\d_X B \subs G$ and if $g \in G$, we have $g \po A = \hat {L_g} (B) \cap G$ and $\hat {L_g}$ is a homeomorphism so that $\d_X (\hat {L_g} (B)) = \hat {L_g} (\d_X B) \subs G$. We deduce that $\U (X)$ is a $G$-module for the left action. Using topological arguments, one can also show that $\U (X) \subs \U^G$.

\medskip

 Conversely, let  $\U$ be a subalgebra and a $G$-submodule of $\U^G$ containing $\B$. Let $X$ be the corresponding zero-dimensional compactification of $G$ (first part of the proposition), we have then $X \sm G = \d_\U G$.

 For all $g \in G$, we check that the map $\hat {L_g}$ such that $\hat {L_g} (h) = g \po h$ for $h \in G$ and $\hat {L_g} (\F) = \{ g \po A ~|~ A \in \F \}$ for $\F \in \d_\U G$ is continuous. We can also show that the map $\hat R : G \ti Y \ma Y$ (see Definitions \ref{dSc}) is continuous. In conclusion $X$ is a Specker compactification, we have the proposition.
\end{proof}

\begin{prp}\label{ends}
 The Specker compactification $X_E (G) = X (\U^G)$ satisfies the following \enquote{universal} property: for every Specker compactification $X$ of $G$ there exists a (unique) surjective $G$-equivariant continuous map $X_E (G) \ra X$ extending the identity on $G$.
\end{prp}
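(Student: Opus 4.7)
The plan is to exploit the classification of Proposition \ref{class}: a Specker compactification $X$ corresponds to a subalgebra $\U = \U (X) \subs \U^G$ that contains $\B$ and is stable under the left $G$-action, while $X_E (G)$ corresponds to the largest such algebra, namely $\U^G$ itself. Accordingly I would define $\phi : X_E (G) \ra X$ as the identity on $G$, and on the boundary by the restriction $\phi (\F) = \F \cap \U$ for $\F \in \d_{\U^G} G$. The first task is to verify that $\F \cap \U$ is indeed an unbounded ultrafilter in $\U$: the filter axioms (F0), (F1), (F2) and unboundedness (NB) are immediate since $\B \subs \U$ and $\U$ is a subalgebra, and (U) follows because for any $A \in \U$ we have $A \in \U^G$, so $\F$ contains either $A$ or $G \sm A$, and both members lie in $\U$.

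For continuity the only non-trivial case is at a boundary point $\F \in \d_{\U^G} G$. A basic open neighborhood of $\phi (\F)$ in $X$ that meets the boundary has the form $A \cup \{ \G \in \d_\U G \mid A \in \G \}$ for some open $A \in \U$ with $A \in \F$, and its preimage under $\phi$ is exactly $A \cup \{ \G \in \d_{\U^G} G \mid A \in \G \}$, which is a basic open of $X_E (G)$.

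Surjectivity reduces to showing that every unbounded ultrafilter $\F'$ in $\U$ extends to an unbounded ultrafilter $\F$ in $\U^G$; once this is done, the maximality of $\F'$ as an ultrafilter in $\U$ forces $\F \cap \U = \F'$, so $\phi (\F) = \F'$. I would apply Zorn's lemma to produce a maximal filter $\F$ in $\U^G$ containing $\F'$ and disjoint from $\B$. The key verification is that this maximal element is in fact an ultrafilter: if some $A \in \U^G$ satisfied neither $A \in \F$ nor $G \sm A \in \F$, then by maximality there would exist $B, B' \in \F$ with $B \cap A \in \B$ and $B' \cap (G \sm A) \in \B$; then $B \cap B' \in \F$ would be contained in $(B \cap A) \cup (B' \cap (G \sm A))$, which is bounded, contradicting (NB).

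Finally, $G$-equivariance of $\phi$ follows from the fact that the $G$-action on boundary points is $g \po \F = \{ g \po A \mid A \in \F \}$ in both compactifications and $\U$ is a $G$-submodule of $\U^G$. Uniqueness is immediate since $G$ is dense in $X_E (G)$, the space $X$ is Hausdorff, and $\phi$ is prescribed on $G$. The main technical obstacle in this plan is the ultrafilter extension step: ensuring that the unboundedness condition survives passage through Zorn's lemma and that the resulting maximal filter is genuinely an ultrafilter, as in the computation sketched above.
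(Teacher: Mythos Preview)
Your proof is correct and follows essentially the same approach as the paper: define the map as the identity on $G$ and $\F \mapsto \F \cap \U$ on the boundary, then verify the required properties. The paper simply asserts surjectivity without details; your Zorn argument works, but a shorter route is to observe that $\phi(X_E(G))$ is compact, hence closed in the Hausdorff space $X$, and contains the dense subset $G$.
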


 It follows that the space $X_E (G)$ is the \gr{ends compactification} in the sense of E. Specker \cite{21}. The space $E (G) = X_E (G) \sm G$ is the \gr{space of ends} of $G$, it is a totally disconnected compactum on which $G$ acts with the $\hat L$-action. We will see in section \ref{s8} that when $G$ is compactly generated, the space of ends $X_E$ of $G$ can be seen as the boundary of a completion of $G$ by a \enquote{visibility} uniformity on $G$.

 Note that the map $X_E (G) \ra X$ given by the proposition restricts to a surjective $G$-equivariant continuous map $E (G) \ra X \sm G$.

\begin{proof}
 We use the same terminology as in the previous proof.

 First note that if $\U_1$ and $\U_2$ are subalgebras and submodules of $\U^G$ such that $\U_1 \subs \U_2$, then the map $\fonctsn{X(\U_2)}{X(\U_1)}{x}{\left\{ \begin{array}{ccl}
                x & \text{if} & x \in G \\
      x \cap \U_1 & \text{if} & x \in \d_{\U_2} G \\ \end{array} \right. }$ is surjective, $G$-equivariant, continuous, and extends the identity on $G$.

 Let then $X$ be a Specker compactification of $G$. Taking $\U_1 = \U(X)$ and $\U_2 = \U^G$ in the previous point we get a map $f: X_E (G) \ra X (\U (X))$, by the previous proof there exists a homeomorphism $\psi: X (\U (X)) \ra X$, the map $\psi \ci f$ is then the desired map $X_E (G) \ra X$.
\end{proof}

 We can now prove Corollary B et C of the introduction:

\begin{prp}\label{cor12}
 Let $G$ be a locally compact convergence group acting on a compactum $T$.
 \begin{itemize}
  \item If the limit set $\La$ is totally disconnected and if $|T| \neq 2$ then there exists a $G$-equivariant surjective continuous map $E(G) \ra \La$.
  \item If the limit set has more than two components then $G$ has infinitely many ends (i.e. $|E(G)| = \infty$).
 \end{itemize}
\end{prp}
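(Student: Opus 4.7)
The plan is to deduce both assertions from machinery already in place: Theorem A (in the form of Corollary \ref{corthmB}), the universality of the ends compactification (Proposition \ref{ends}), and the quotient construction of Lemma \ref{qcc}.

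For the first assertion, I would dispatch the case $|T|=1$ as trivial (so that $\La$ has at most one point) and assume $|T| \geq 3$. Then I would invoke Corollary \ref{corthmB} to assert that the closure $\bar G$ of $G$ in the attractor sum of $G$ and $T$ is a Specker compactification of $G$; by Proposition \ref{convAS} its boundary $\bar G \sm G$ coincides with $\La$. The universal property stated in Proposition \ref{ends} then furnishes a surjective $G$-equivariant continuous map $X_E(G) \ra \bar G$ extending the identity on $G$, and, as noted immediately after that proposition, this map restricts to a surjective $G$-equivariant continuous map $E(G) \ra \La$, which is exactly the desired morphism.

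For the second assertion, the strategy is to reduce to the first via Lemma \ref{qcc}. Let $\La'$ denote the quotient of $\La$ by the relation of lying in the same connected component, and set $T' = \Om \sqcup \La'$. Lemma \ref{qcc} guarantees that $G$ acts on $T'$ with convergence property and with totally disconnected limit set $\La'$, and by construction $|\La'|$ equals the number of connected components of $\La$. The hypothesis then gives $|\La'| \geq 3$, hence $|T'| \geq 3$, so the first assertion applies and produces a surjective continuous $G$-equivariant map $E(G) \ra \La'$. Applying Proposition \ref{limitset} to the convergence action on $T'$, the condition $|\La'| \geq 3$ forces $\La'$ to be perfect and in particular infinite; surjectivity then yields $|E(G)| \geq |\La'| = \infty$, as claimed.

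I do not anticipate any substantive obstacle: everything rests on results already proved, and the only point requiring a brief sanity check is that the morphism $X_E(G) \ra \bar G$ of Proposition \ref{ends} really carries the ends into the boundary $\La$. This is immediate from continuity, surjectivity, and the fact that the map restricts to the identity on the open dense subset $G$, so its image on the complement $E(G)$ must land in $\bar G \sm G = \La$ and exhaust it.
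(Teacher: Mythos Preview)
Your proposal is correct and follows essentially the same approach as the paper's own proof: both handle the first assertion by dispatching the trivial case $|T|=1$, then applying Corollary~\ref{corthmB} and the universality of the ends compactification (Proposition~\ref{ends}); and both handle the second assertion by passing to the totally disconnected quotient $\La'$ via Lemma~\ref{qcc}, invoking Proposition~\ref{limitset} to see that $\La'$ is infinite, and then applying the first assertion. Your write-up is slightly more detailed in justifying that the universal map restricts to a surjection $E(G)\ra\La$, but the argument is otherwise identical.
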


\begin{proof}
 Assume first that $\La$ is totally disconnected (then $\La \neq \vi$ by convention). The case $|T| = 1$ is trivial (we have then $\La = T$), and if $|T| \geq 3$ by Corollary \ref{corthmB} the closure $G \cup \La$ of $G$ in the attractor sum $G \cup T$ is a Specker compactification of $G$, by the previous proposition there exists then a surjective map $E(G) \ra \La$.

 Assume now that $\La$ has more than two components. Let $\La'$ be the totally disconnected quotient space of $\La$ where we collapsed connected components to points and $T' = (T \sm \La) \cup \La'$, then $G$ acts on $T'$ with convergence property and with limit set $\La'$ (see Lemma \ref{qcc}). By propositon \ref{limitset} $\La'$ is infinite, by the previous point there exists a surjective map $E(G) \ra \La'$, we conclude that $E(G)$ is also infinite.
\end{proof}

\section{Case of a compactly generated group}\label{s7}

 We want now to prove Theorem D, from which we will deduce Corollary E. The tools that we use are adapted from \cite{AbEReT} and \cite[\S 2 and \S 3]{AbSK}.

 We start with the construction of the Cayley graph of a compactly generated group $G$, we fix then a Specker compactification $X$ and show the convergence property for the action of $G$ on $X$ (Theorem \ref{thmC+}), this yields Theorem D. We conclude with the proof of Corollary E and some considerations about the hypotheses in Theorem D.
 
\medskip

 In the whole section, $G$ is a \gr{compactly generated} locally compact topological group, we write multiplicatively (with $\po$) the group operation.

\begin{lem}
 There exists a compact symmetric neighborhood $F \in \Ne_G (1)$ that generates $G$.
\end{lem}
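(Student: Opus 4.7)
The plan is elementary: combine a compact generating set with a compact neighborhood of the identity, then symmetrize. By the assumption that $G$ is compactly generated, fix a compact subset $K \subs G$ with $G = \bigcup_{n \ge 1} K^n$. Using local compactness (and the Hausdorff convention recalled in section \ref{s2}), fix a compact neighborhood $U \in \Ne_G (1)$. Then I would set
$$F = K \cup K^{-1} \cup U \cup U^{-1}.$$

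It remains to check four properties. First, $F$ is compact as a finite union of compact subsets: the inversion map $g \ma g^{-1}$ is a homeomorphism of $G$, so $K^{-1}$ and $U^{-1}$ are compact along with $K$ and $U$. Second, $F = F^{-1}$ by construction, so $F$ is symmetric. Third, $F \in \Ne_G (1)$ because it contains the neighborhood $U$ of $1$. Fourth, $F$ generates $G$: the subgroup $\langle F \rangle$ contains $K$, hence it contains every product $K^n$ for $n \ge 1$, and therefore contains $\bigcup_{n \ge 1} K^n = G$.

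There is no genuine obstacle here; this is a standard exercise in the theory of locally compact groups. The only point requiring a small amount of care is the appeal to Hausdorffness to guarantee the existence of a compact neighborhood of $1$ (a non-Hausdorff locally compact space need not admit such neighborhoods), but this is built into the paper's blanket convention on locally compact spaces.
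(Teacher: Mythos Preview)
Your argument is correct and in fact more elementary than the paper's. The paper proceeds differently: starting from a compact generating set $K$, it invokes Baire's category theorem to find some $n$ with $K^n$ having non-empty interior, and then takes $F = (K \cup K^{-1})^{2n}$. The Baire argument yields the extra information that a power of the original generating set already has interior, but for the lemma as stated this is unnecessary. Your approach of simply adjoining a compact neighborhood of the identity and symmetrizing is shorter and avoids Baire entirely; the paper's route would be preferable only if one later needed $F$ to be a power of $K \cup K^{-1}$, which the paper does not.
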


\begin{proof}
 If $K$ is a compact subset generating $G$ (i.e. $G = \bigcup_{n \in \N} K^n$) by Baire's lemma some $K^n$ has a non-empty interior, we then check that $F = (K \cup K^{-1})^{2n}$ is as in the statement of the lemma.
\end{proof}

\begin{defis}
 The \gr{Cayley graph} of $G$ (relatively to the generating set $F$) is the set (of edges) $\Ga = \{ (x, x \po f) ~|~ x \in G, f \in F \}$. A \gr{$\Ga$-path} connecting two points $g, h \in G$ is a finite sequence $g = x_0, ~x_1, ~\dots~, ~x_n = h$ (where $n \in \N$) of points in $G$ such that $(x_0,x_1), \dots, ~(x_{n-1},x_n) \in \Ga$, we say that the path is \gr{in} a subset $A \subs G$ if all the $x_i$ belongs to $A$. Let $A$ be a subset of $G$. We say that $A$ is \gr{$\Ga$-connected} if any two points in $A$ can be connected by a $\Ga$-path in $A$. Let $\d_\Ga A = \{ (g,h) \in \Ga ~|~ |\{g,h\} \cap A| = 1 \}$ be the \gr{$\Ga$-boundary} of $A$. For all $n \in \N$, let $A^{\le n} = A \po F^n$ be the set of points of $G$ whose \enquote{$\Ga$-distance} from $A$ is at most $n$, it is easy to see that the projection $|\d_\Ga A|$ of $\d_\Ga A$ on any component in $G \ti G$ is exactly the set $|\d_\Ga A| = A^{\le 1} \cap (G \sm A)^{\le 1}$.
\end{defis}

 Recall that $\U^G$ is the set of all \enquote{compactly almost invariant} subsets of $G$ and that $\B$ is the family of bounded subsets of $G$ (see Notations \ref{nots26}).

\begin{prp}\label{UGaUG}\ 
 \begin{itemize}
  \item $G$ is $\Ga$-connected.
  \item Any bounded subset of $G$ is contained in a compact $\Ga$-connected subset.
  \item The family $\U_{\Ga} = \{ A \subs G ~|~ |\d_\Ga A| \in \B \}$ coincides with $\U^G$.
 \end{itemize}
\end{prp}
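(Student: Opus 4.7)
The first two items are quick consequences of $F$ being a compact symmetric generating neighborhood of $1$. For $\Ga$-connectedness, writing any $g \in G$ as $g = f_1 \po \dots \po f_k$ with $f_i \in F$ yields a $\Ga$-path $1, f_1, \dots, f_1 \cdots f_k = g$; two arbitrary elements are then joined through $1$. For the second item, since $F$ has non-empty interior $V$ containing $1$, one has $F^n \subs F^n \po V \subs F^{n+1}$, so $\{F^n\}_n$ exhausts $G$ with each $F^n$ contained in the interior of $F^{n+1}$; any bounded set therefore sits inside some $F^N$, and $F^N$ is $\Ga$-connected because every element of $F^N$ is path-connected to $1$ through points that remain in $F^N$ (using $1 \in F$).

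The third item is the substantive part. The key preliminary observation, exploiting that $F$ is a neighborhood of $1$, is the containment $\d_G A \subs |\d_\Ga A|$ for every $A \subs G$: if $x \in \overline A$ then $xV$ meets $A$, so $x \in AF^{-1} = AF$, and symmetrically $\overline{G \sm A} \subs (G \sm A) F$, whence $\d_G A \subs AF \cap (G \sm A) F = |\d_\Ga A|$. In particular $\U_\Ga \subs \U^+$. For $\U_\Ga \subs \U^G$, I would decompose $|\d_\Ga A| = (AF \sm A) \cup (A \cap (G \sm A) F)$ and run two parallel inductions on $n$: (i) $AF^n \sm A$ is bounded, via $AF^n \sm A \subs (AF \sm A) \cup (AF^{n-1} \sm A) \po F$; and (ii) $A \cap (G \sm A) F^n$ is bounded, because any such point either already sits in $|\d_\Ga A|$ or is obtained from a point of $A \cap (G \sm A) F^{n-1}$ by right multiplication by an element of $F$. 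Given a nonempty compact $K \subs F^N$ and any fixed $k_0 \in K$, one checks $(AK) + A \subs (AF^N \sm A) \cup (A \cap (G \sm A) F^N)$: the second inclusion uses that $a \in A \sm AK$ forces $a k_0^{-1} \notin A$, placing $a$ in $(G \sm A) k_0 \subs (G \sm A) F^N$.

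The reverse inclusion $\U^G \subs \U_\Ga$ is nearly immediate: $\U^G$ is a Boolean subalgebra, hence closed under complement, so applying the defining condition with $K = F$ to both $A$ and $G \sm A$ gives $(AF) + A = AF \sm A$ and $((G \sm A) F) + (G \sm A) = (G \sm A) F \cap A$ bounded, whose union is exactly $|\d_\Ga A|$.

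I expect the main obstacle to be the second induction. Step (i) is the expected nearest-neighbor propagation of a bound outward from $A$; step (ii), which controls the intersection of $A$ with the distance-$n$ thickening of $G \sm A$, requires carefully using that a point escaping $A$ after $n$ moves either escaped already after one move (contributing to $|\d_\Ga A|$) or descends to an escape of length $n-1$. Only via this iterated Cayley-metric descent can one bootstrap from the nearest-neighbor hypothesis to compact multipliers of arbitrary size.
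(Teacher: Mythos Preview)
Your argument is correct. Parts 1 and 2 match the paper's proof. For part 3 the two proofs differ in organization, though both are elementary inductions along the Cayley graph.

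For $\U_\Ga \subs \U^G$, the paper first observes $A+A\po F \subs |\d_\Ga A|$, hence $A+A\po f\in\B$ for each $f\in F$; it then inducts on the word length of $g$ to get $A+A\po g\in\B$ and $A+A\po F\po g\in\B$ for every $g\in G$, and finally covers an arbitrary compact $W$ by finitely many translates $F\po v$ to conclude $A+A\po W\in\B$. You instead split $|\d_\Ga A|$ into its two halves $A\po F\sm A$ and $A\cap(G\sm A)\po F$ and run two separate inductions on the exponent $n$ in $F^n$, then absorb any compact $K$ into a single $F^N$. Your route bypasses the detour through individual group elements; the paper's keeps the symmetric-difference bookkeeping uniform and needs only one induction rather than two.

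For $\U^G \subs \U_\Ga$, the paper records the single inclusion $|\d_\Ga A|\subs(A+A\po F)\po F$, while you appeal to closure of $\U^G$ under complement (asserted without proof among the notations) and apply the defining condition once to $A$ and once to $G\sm A$. The paper's line is slightly more self-contained; yours is more symmetric.
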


 In \cite{AbSK} the third point is stated but it is proved only in the case where $G$ is discrete.

\begin{proof}
 Let $g, h \in G$. As $F$ generates $G$, there exists $n \in \N$ and $f_1, \dots, f_n \in F$ such that $g^{-1} \po h = f_1 \dots f_n$. This means that $1, ~f_1, ~f_1 \po f_2, ~\dots~, ~f_1 \po \dots \po f_n = g^{-1} \po h$ is a $\Ga$-path connecting $1$ to $g^{-1} \po h$, so that the \enquote{translated} path $g = g \po 1, ~g \po f_1, ~\dots~, ~g \po (g^{-1} \po h) = h$ is a $\Ga$-path connecting $g$ to $h$. We have the first point.

 Recall that $F$ is a compact symmetric neighborhood of $1 \in G$. If $K$ is a fixed compact $\Ga$-connected subset of $G$ (for instance a singleton), we easily check that $(K^{\le n})_{n \in \N}$ is an increasing sequence of compact $\Ga$-connected subsets of $G$ whose interiors cover $G$, so that any bounded subset of $G$ is contained in some $K^{\le n}$. We deduce the second point.

\medskip

 Let us show that $U^G = U_\Ga$. If $A \in \U^G$, we check that $|\d_{\Ga} A| \subs (A + A \po F) \po F \in \B$ (recall that $+$ stands for the symmetric difference) hence $A \in \U_\Ga$. Conversely, let $A \in \U_\Ga$. We first check that $A + A \po F \subs |\d_{\Ga} A| = \bigcup_{f \in F} (A + A \po f)$ hence $A + A \po F \in \B$ and $A + A \po f \in \B$ for all $f \in F$. Using the fact that $F$ generates $G$, we get that $A + A \po g \in \B$ and $A + A \po F \po g \in \B$ for all $g \in G$. If $W$ is a non-empty compact subset of $G$, there exists a finite subset $V \subs W$ such that $W \subs \bigcup_{v \in V} F \po v$, and we can show that for any $w \in W$ we have $A + A \po W \subs \bigcup_{v \in V} (A + A \po F \po v) \cup (A + A \po w) \in \B$. As $A \po F$ (resp. $(G \sm A) \po F$) is a neighborhood of $A$ (resp. of $G \sm A$) in $G$, we get $\d_G A \subs |\d_\Ga A|$ so that $\d_G A \in \B$. We conclude that $A \in \U^G$.
\end{proof}

 Until the end of the section, we fix a Specker compactification $X$ of $G$ (Definitions \ref{dSc}) and we write $\La = X \sm G$.

 The following lemma is proven in \cite[Lemma 3.1]{AbSK} :

\begin{lem}
 The map $\fonct{\hat L}{G \ti X}{X}{(g,x)}{\hat {L_g} (x)}$ is continuous.
\end{lem}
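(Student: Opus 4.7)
The action $\hat L$ is separately continuous: each $\hat L_g$ is continuous by the definition of a Specker compactification (Definitions \ref{dSc}), and for each $x \in G$ the orbit map $g \ma g \po x$ is continuous on $G$. Joint continuity at $(g_0, x_0)$ with $x_0 \in G$ reduces immediately to continuity of the group law on the open subset $G \subs X$, so the substance is joint continuity at points $(g_0, x_0)$ with $x_0 \in \La := X \sm G$.

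Using the identity $\hat L_g = \hat L_{g_0} \ci \hat L_{g_0^{-1} g}$ and the continuity of $\hat L_{g_0}$, I would first reduce to $g_0 = 1$: the task becomes showing that $\hat L_{h_n}(x_n) \tend n x_0$ whenever $h_n \tend n 1$ in $G$ and $x_n \tend n x_0$ in $X$ with $x_0 \in \La$. Fix a basic neighborhood $B$ of $x_0$ with $\d_X B \subs G$ (Lemma \ref{blc}) and set $A := B \cap G \in \U(X) \subs \U^G$. My plan is to produce a compact symmetric neighborhood $V$ of $1 \in G$ and a subset $A' \subs A$ with $A' \in \F_{x_0}$ (the unbounded ultrafilter attached to $x_0$ by Proposition \ref{class}) and $V \po A' \subs A$; the canonical candidate is $A' := \{ y \in A : V y \subs A \}$, which satisfies the inclusion by construction.

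Once such $A'$ is obtained, $A'_+ := A' \cup \{ w \in \La : A' \in \F_w \}$ is a neighborhood of $x_0$ in $X$. The ultrafilter description of the left action on $\La$, namely $\hat L_v (\F) = \{ v \po C : C \in \F \}$, then gives $\hat L_v (A'_+) \subs B$ for every $v \in V$: on the $G$-part this is $V \po A' \subs A$, and on the $\La$-part it reduces to $v^{-1} A \sups A' \in \F_w$ for each $v \in V$, which holds because $V$ is symmetric. This delivers joint continuity at $(1, x_0)$.

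The main obstacle is showing that $A \sm A' = A \cap V \po (G \sm A)$ is bounded for a suitable $V$, so that $A'$ differs from $A$ only by a bounded set and is therefore still in the unbounded ultrafilter $\F_{x_0}$. This is delicate because the definition of $\U^G$ encodes compact almost invariance under right translations, while here we are dealing with a left translation. The tool to bridge the gap is the Cayley graph characterization $\U^G = \U_\Ga$ of Proposition \ref{UGaUG}: since $A \in \U_\Ga$, the $\Ga$-boundary $|\d_\Ga A|$ is bounded; combining this with compact generation (choosing $V$ inside some power $F^n$ of the generating set), one can, by chaining left translations by elements of $F$, control $A \cap V \po (G \sm A)$ by a bounded neighborhood of $|\d_\Ga A|$. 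The full combinatorial argument is carried out in \cite[Lemma 3.1]{AbSK}, to which we refer for the details.
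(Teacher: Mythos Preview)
The paper does not prove this lemma at all: it simply cites \cite[Lemma 3.1]{AbSK}. Your proposal also ends by deferring to the same reference, so at the level of the final citation the two agree.

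However, the strategy you sketch before deferring to Abels contains a genuine gap. You propose to take $A' = \{ y \in A : V y \subs A \}$ and claim that $A \sm A' = A \cap V \po (G \sm A)$ is bounded, appealing to the Cayley graph characterization $\U^G = \U_\Ga$ and ``chaining left translations by elements of $F$''. This is false. In the free group $G = F_2 = \langle a, b \rangle$ with $F = \{1, a^{\pm 1}, b^{\pm 1}\}$, let $A$ be the set of reduced words whose first letter is $a$. One checks that $|\d_\Ga A| = \{1, a\}$, so $A \in \U_\Ga = \U^G$. But for every $y \in A$ the left translate $b \po y$ begins with $b$ and hence lies outside $A$; thus $A' = \vi$, and $A \sm A'$ is all of $A$, which is unbounded. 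The obstruction you correctly identify --- that $\U^G$ encodes \emph{right} compact almost-invariance while your construction requires \emph{left} almost-invariance --- is not a technicality to be cleaned up but a real obstruction to this particular route: elements of $\U^G$ are stable under left translation (this is the $G$-module structure), but $g \po A + A$ need not be bounded.

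So while your reduction to $g_0 = 1$ and your use of the ultrafilter description of $\La$ are fine, the core step of your plan fails, and Abels' argument (whatever its details) cannot be the one you outline. If you want to keep the proposal as more than a bare citation, you would need a different mechanism --- for instance one that exploits the already-known joint continuity of $\hat R$ rather than trying to manufacture a left-invariant core inside $A$.
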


 We need the following lemma (generalizing Lemma \ref{blc}) to prove the convergence property. It is adapted from \cite[3.5 and 4.4]{AbEReT} and \cite[3.4]{AbSK}.

\begin{lem}\label{blS}
 Let $\la \in \La$. 
 \begin{itemize}
  \item The family $B_{\la} = \{ V \in \Ne_X (\la) ~|~ |\d_\Ga (V \cap G)| \in \B ~\text{and}~ G \sm V ~\text{is}~ \Ga \text{-connected} \}$ is a local basis at $\la$. 
  \item If $V \in B_{\la}$ and if $B \subs X$ satisfies $|\d_\Ga (B \cap G)| \in \B$, there exists a neighborhood $W$ of $\la$ in $X$ such that for all $g \in W \cap G$, either $g \po B \subs V$ or $g \po B \sups X \sm V$.
 \end{itemize}
\end{lem}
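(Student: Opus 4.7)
Both parts rest on the chain of identifications $\U(X) \subs \U^G = \U_\Ga$ supplied by Proposition~\ref{class} and Proposition~\ref{UGaUG}, together with the local basis $\W_\la$ produced by Lemma~\ref{blc}.

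For the first part, I fix an open neighborhood $U$ of $\la$. Lemma~\ref{blc} produces an open $V_0 \in \W_\la$ with $\ad{V_0} \subs U$. Since $\d_X V_0 \subs G$, the set $A := V_0 \cap G$ belongs to $\U(X) \subs \U_\Ga$, so $|\d_\Ga A| \in \B$; moreover $V_0 \cap \La$ is clopen in $\La$ (it is both open and equal to $\ad{V_0} \cap \La$). I build $V \in B_\la$ with $V \subs U$ by first adjoining to $V_0$ every bounded $\Ga$-component of $G \sm V_0$: because two distinct $\Ga$-components of $G \sm V_0$ admit no $\Ga$-edge between them, the enlarged set has $|\d_\Ga(V \cap G)| \subs |\d_\Ga A|$ still bounded, and its complement in $G$ is exactly the union of the unbounded $\Ga$-components of $G \sm V_0$.

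It remains to ensure that only one such unbounded component survives, while keeping $V \subs U$. I would exploit the total disconnectedness of $\La$: the accumulation set in $\La$ of each unbounded $\Ga$-component of $G \sm V_0$ is a nonempty closed subset of $\La \sm (V_0 \cap \La)$; choosing a sufficiently fine clopen $E' \subs V_0 \cap \La$ containing $\la$ and lifting $E'$ back to a neighborhood $V_0' \subs V_0$ of $\la$ in $X$ via normality of $X$ isolates $\la$ from all but one ``direction to infinity'', so that $G \sm V_0'$ has a single unbounded $\Ga$-component. Applying the adjoining operation to $V_0'$ then yields the desired $V$.

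For the second part, let $P := |\d_\Ga(B \cap G)|$, which is compact. By continuity of $\hat R \colon G \ti X \ra X$ (axiom~(4) in Definitions~\ref{dSc}) and the equality $\hat R(p, \la) = \la \in V$ for every $p \in P$, the set $\Om := \{(x,p) \in X \ti P : \hat R(p,x) \in V\}$ is an open neighborhood of $\{\la\} \ti P$ in $X \ti P$. The tube lemma, using compactness of $P$, produces an open neighborhood $W$ of $\la$ in $X$ with $g \po P \subs V$ for every $g \in W \cap G$. Since left multiplication by $g$ is a $\Ga$-automorphism, $|\d_\Ga(g \po (B \cap G))| = g \po P \subs V \cap G$; the $\Ga$-boundary of $g \po (B \cap G)$ therefore avoids $G \sm V$. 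Because $G \sm V$ is $\Ga$-connected and cannot be split by an empty $\Ga$-cut, it lies either entirely in $g \po (B \cap G)$ or entirely in its $G$-complement, yielding the dichotomy at the $G$-level. The extension to all of $X$ follows from continuity of $\hat L_g$ and density of $G$ in $X$, after possibly shrinking $W$ so that $W \subs V$ in order to also control the $\La$-part.

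The main obstacle is the refinement in the first part, namely isolating a single unbounded $\Ga$-component in $G \sm V$ while keeping $V$ inside the prescribed $U$; the tube-lemma argument of the second part is comparatively routine once a $V \in B_\la$ with these properties is in hand.
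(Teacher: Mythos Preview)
Your treatment of the second point is essentially the paper's: a tube-lemma argument for $\hat R$ over the compact set $|\d_\Ga(B \cap G)|$ to obtain $g \po |\d_\Ga(B \cap G)| \subs V$, then the $\Ga$-connectedness of $G \sm V$ to force the dichotomy on $G$, and finally density plus the fact that $\hat L_g$ is a homeomorphism preserving $G$ and $\La$ to extend it to $X$. This part is fine.

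The first point, however, has a genuine gap, and the paper's argument is both different and much simpler. Your plan is to \emph{enlarge} $V_0$ by adjoining the bounded $\Ga$-components of $G \sm V_0$, and then to \emph{refine} on the $\La$-side so that only one unbounded component remains. Two difficulties: (i)~adjoining bounded pieces of $G \sm V_0$ may push the resulting set outside the prescribed neighborhood $U$, since those pieces lie in $G$ and there is no reason they should lie in $U \cap G$; (ii)~the refinement step --- passing to a smaller clopen $E' \subs V_0 \cap \La$ and lifting it back by normality --- is not shown to reduce the number of unbounded $\Ga$-components of the complement. Shrinking a neighborhood of $\la$ can just as well split one unbounded component into several, so some further argument would be needed, and you correctly flag this as the ``main obstacle''.

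The paper sidesteps both issues by going in the opposite direction: it \emph{shrinks} rather than enlarges. Given $V' \in \W_\la$ (so $|\d_\Ga(V' \cap G)| \in \B$), choose by Proposition~\ref{UGaUG} a compact $\Ga$-connected set $K \subs G$ containing $|\d_\Ga(V' \cap G)|$, and set $V = V' \sm K$. Since $K$ is compact in $G$, $V$ is still a neighborhood of $\la$ and automatically $V \subs V'$, so containment in the prescribed neighborhood is free. The $\Ga$-boundary of $V \cap G$ stays bounded because $|\d_\Ga(A \cap B)| \subs |\d_\Ga A| \cup |\d_\Ga B|$. Finally $G \sm V = (G \sm V') \cup K$ is $\Ga$-connected by a short path-rerouting argument: whenever a $\Ga$-path between two of its points makes an excursion into $V$, the entry and exit vertices of that excursion are forced to lie in $|\d_\Ga(V' \cap G)| \subs K$, and since $K$ is $\Ga$-connected the excursion can be replaced by a path inside $K$. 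No component-counting and no use of the structure of $\La$ is required.
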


\begin{proof}
 In the proof, we will use the following notations: $V$ and $V'$ will be subsets of $X$, $U$ and $U'$ will be their traces of $G$, i.e. $U = V \cap G$ and $U' = V' \cap G$, and $Q$ and $Q'$ will be their complement in $G$, i.e. $Q = G \sm V = G \sm U$ and $Q' = G \sm V' = G \sm U'$.

  Recall that Proposition \ref{class} gives $\U (X) \subs \U^G$ and that Proposition \ref{UGaUG} gives $\U_\Ga = \U^G$. We deduce that for any subset $B \subs X$, if $\d_X B \subs G$ then $|\d_\Ga (B \cap G)| \in \B$.

\medskip

 Let $V' \in \Ne_X (\la)$, by Lemma \ref{blc} we can assume that $\d_X V' \subs G$ so that $|\d_\Ga U'| \in \B$. There exists a compact $\Ga$-connected subset $K \subs G$ containing $|\d_\Ga U'|$ by Proposition \ref{UGaUG}, we claim that $V = V' \sm K$ belongs to $B_{\la}$.

 As $K$ is a compact subset of $G$ we already have $V \in \Ne_X (\la)$. For all $A, B \subs G$ we check that $(A \cap B) \po F \subs A \po F \cup B \po F$ so that $|\d_\Ga (A \cap B)| \subs |\d_\Ga A| \cup |\d_\Ga B|$. Noting then that $|\d_\Ga (G \sm K)| = |\d_\Ga K| \subs K^{\le 1} \in \B$ and that $U = U' \cap (G \sm K)$, we get that $|\d_\Ga U|$ is bounded.  

 In order to show that $Q = G \sm U$ is $\Ga$-connected, we show that every $\Ga$-path connecting points of $Q$ and \enquote{going out $Q$} can be \enquote{diverted} so that it stays in $Q$. Let then $a, b \in Q$, $u,v \in U = G \sm Q$, $n \in \N$ and $g_1, \dots, g_n \in G$ be such that $(a,u), (u,g_1), \dots, (g_n,v), (v,b) \in \Ga$, we want to show that $a, b \in K$. Note that $Q = Q' \cup K$ and that if $a \in Q'$, as $(a,u) \in \Ga$ and as $Q' = G \sm U' \subs (G \sm U') \po F$ there exists $f \in F$ such that $u = a \po f$ hence $a = u \po f^{-1} \in U \po F \subs U' \po F$ and finally $a \in |\d_\Ga U'| \subs K$. In all cases we have $a \in K$ and the same holds for $b$. As $K$ is $\Ga$-connected there exists a $\Ga$-path connecting $a$ to $b$ in $K$ hence in $Q$.

 Let now $a,b$ be any points in $Q$. By $\Ga$-connectedness of $G$ there exists a $\Ga$-path connecting $a$ to $b$ and we can \enquote{divert all $\Ga$-subpaths going out $Q$} as earlier (there are only finitely many of them) so that the resulting path is a $\Ga$-path in $Q$. We deduce that $Q$ is $\Ga$-connected and the first point.

\medskip

 Let $V \in B_{\la}$ and let $B \subs X$ be such that $|\d_\Ga A|$ is bounded, where $A = B \cap G$. Recall that $\hat R$ is continuous on $G \ti X$ and that $\hat R (G \ti \{ \la \}) = \{ \la \}$. For $g \in \ad {|\d_\Ga A|}$ there exists then $O_g \in \Ne_G (g)$ and $W_g \in \Ne_X (\la)$ such that $\hat R (O_g \ti W_g) \subs V$, and by compacity there exists a finite family $D \subs \ad {|\d_\Ga A|}$ such that $\ad {|\d_\Ga A|} \subs \bigcup_{g \in D} O_g$. Then $W = \bigcap_{g \in D} W_g$ is a neighborhood of $\la$ in $X$ such that for any $g \in W \cap G$ we have $\hat R (|\d_\Ga A| \ti \{ g \}) \subs V$ i.e. $g \po |\d_\Ga A| \subs V$.

 Let us assume that there exists $g \in W \cap G$ and $a, b \in G \sm V$ such that $\{ g^{-1} (a), g^{-1} (b) \}$ intersects both $A$ and $G \sm A$. As $G \sm V$ is $\Ga$-connected, there exists $n \in \N$ and a $\Ga$-path $a = x_0, ~x_1, ~\dots~, ~x_n = b$ in $G \sm V$. The translated path $g^{-1} \po x_0, ~\dots~, ~g^{-1} \po x_n$ connects a point in $A$ and a point in $G \sm A$, it follows from the definition of $|\d_\Ga A|$ that there exists $i$ such that $g^{-1} \po x_i \in |\d_\Ga A|$, but then $x_i = g \po (g^{-1} \po x_i) \in G \sm V \cap g \po |\d_\Ga A| = \vi$, a contradiction.

 This means that for all $g \in W \sm G$ there are two cases: either $g^{-1} (G \sm V) \subs G \sm A$, hence $G \sm V \subs g \po (G \sm A)$ and $g \po A \subs G \cap V \subs V$; or $g^{-1} (G \sm V) \subs A$, hence $g \po A \sups G \sm V$. In both these inclusions we can change $A$ to $B$ and $G$ to $X$ because $x \ma g \po x$ is an homeomorphism of $X$ letting $G$ and $\La$ invariant and because $P \cap \La = \ad {P \cap G} \cap \La$ for all $P \in \U^+$, hence for $P \in \{ B, V, X \sm V \}$ (see the proof of Proposition \ref{class}). We have the second point.
\end{proof}

We can now prove Theorem D:

\begin{thm}\label{thmC+}
 If $G$ is a locally compact compactly generated topological group and if $X$ is a Specker compactification of $G$ then the action of $G$ on $X$ is convergence and its limit set is the boundary $\La = X \sm G$.
\end{thm}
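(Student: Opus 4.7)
My plan is to prove convergence and identify the limit set in a single argument. Starting from a wandering net $(g_n)_n$ on $G$, I pass to a subnet twice using compactness of $X$ so that $g_n \to \la$ in $X$ and $g_n^{-1} \to \mu$ in $X$. Since $(g_n)_n$ is wandering and inversion is continuous, neither limit can lie in $G$, hence $\la, \mu \in \La$; I will then show that this subnet collapses to $\la$ except at $\mu$, which will simultaneously give the convergence property and place $\la$ in the limit set.

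To prove collapsing, fix a compact $K \subs X \sans{\mu}$ and a neighborhood $V$ of $\la$. The first part of Lemma \ref{blS} allows me to assume $V \in B_\la$, and applied at $\mu$ (using that $X \sm (K \cup \{1\})$ is an open neighborhood of $\mu$ since $\mu \in \La$ is distinct from $1 \in G$ and from every point of $K$), it also provides $V' \in B_\mu$ disjoint from the compact set $K \cup \{1\}$. The main step is then to apply the second part of Lemma \ref{blS} \emph{at the point $\mu$}, with the neighborhood $V'$ and the set $B' = X \sm V$ --- this is legitimate since $|\d_\Ga (B' \cap G)| = |\d_\Ga (V \cap G)|$ is bounded by $V \in B_\la$. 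I then obtain $W' \in \Ne_X(\mu)$ such that for every $h \in W' \cap G$, either $h \po (X \sm V) \subs V'$ or $h \po (X \sm V) \sups X \sm V'$.

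The crux of the argument, which I expect to be the only real subtlety, is to rule out the second alternative when $h = g_n^{-1}$. Since $g_n^{-1} \to \mu$, eventually $g_n^{-1} \in W' \cap G$; applying the second case to the point $1 \in X \sm V'$ would force $g_n = g_n \po 1 \in X \sm V$, contradicting $g_n \to \la \in V$. Hence eventually $g_n^{-1} \po (X \sm V) \subs V'$, which rewrites as $g_n \po (X \sm V') \subs V$, and since $K \subs X \sm V'$ this yields $g_n \po K \subs V$, establishing collapsing and hence the convergence property. For the limit set: on one hand, applying any collapsing net to $K = \{1\}$ gives $g_n \to \la$ in $X$ with $(g_n)_n$ wandering, forcing $\la \in \La$; on the other, density of $G$ in $X$ provides for each $\la \in \La$ a wandering net on $G$ converging to $\la$, whose collapsing subnet (constructed above) has attractor $\la$. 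Thus $\La$ coincides with the limit set.
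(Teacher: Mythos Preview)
Your argument is correct and slightly cleaner than the paper's. Both proofs start identically (pass to a subnet with $g_n \to \la \in \La$ and $g_n^{-1} \to \mu \in \La$) and both hinge on the dichotomy in the second part of Lemma~\ref{blS}, but they invoke it at opposite boundary points. The paper argues by contradiction: it extracts an accumulation point $x$ of a ``bad'' net $(x_i)_i$ in $K$, applies the lemma at $\la$ with $B = V_x$ a small neighborhood of $x$, and uses the dichotomy to force $g_i^{-1}\cdot h \to x$ for a fixed $h \in G \setminus V$, contradicting continuity of $\hat R$ at $(h,\mu)$. You instead apply the lemma at $\mu$ with $B = X \setminus V$, and use the single anchor point $1 \in G \setminus V'$ to rule out the wrong branch of the dichotomy; the inclusion $g_n \cdot K \subs g_n \cdot (X \setminus V') \subs V$ then drops out directly. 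Your route avoids the contradiction setup and the auxiliary point $x$, at the cost of needing an extra application of the first part of Lemma~\ref{blS} (to produce $V' \in B_\mu$ missing $K \cup \{1\}$).

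One small quibble on the limit set paragraph: taking $K = \{1\}$ in an arbitrary collapsing net presupposes that the repelling point is not $1$. Either replace $1$ by any $h \in G$ distinct from the repeller, or (as the paper does) observe directly that every $g \in G$ lies in the ordinary set because a compact neighborhood $K \subs G$ satisfies $\{\,h \in G : hK \cap K \neq \varnothing\,\} = K K^{-1}$, which is compact.
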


\begin{proof}
 Let $(g_n)_n$ be a wandering net on $G$. As $X$ is compact, up to passing to a subnet we can assume that there exists points $\la, \mu \in X$ such that $g_n \tend{n} \la$ and $g_n^{-1} \tend{n} \mu$ in $X$. The net $(g_n)_n$ is wandering on $G$ which is open in $X$ and $g \ma g^{-1}$ is continuous on $G$, we then have necessarily $\la, \mu \in \La$. We claim that $\clu{g_n}{n}{X}{\mu}{\la}$, we deduce the convergence property.
 
  Assume that it is not the case: there exists a compact set $K \subs X \sans{\mu}$, a neighborhood $V \in \Ne_X (\la)$ and a subnet $(g_i)_i$ such that $g_i \po K \nsubs V$ for all $i$. We can assume that $|\d_\Ga (V \cap G)|$ is bounded and that $G \sm V$ is $\Ga$-connected by Lemma \ref{blS}. There exists then a net $(x_i)_i$ in $K$ such that $g_i \po x_i \notin V$ for all $i$, and up to passing to a subnet we can assume that $(x_i)_i$ converges to a point $x \in K$. 

 Let us fix $h \in G \sm V$. We show that $g_i^{-1} \po h = \hat R (h,g_i^{-1}) \tend i x \in X$, which is in contradiction with $g_i^{-1} \tend{i} \mu$ in $X$ and the continuity of $\hat R$ on $G \ti X$ (note that $x \neq \mu$).

 Let $V_x \in \Ne_X (x)$, we can assume that $|\d_\Ga (V_x \cap G)| \in \B$ (Lemma \ref{blS}). Now, if $W \in \Ne_X (\la)$ is as in the second point of the lemma applied to $B = V_x$, for $i$ large enough $g_i \in W \cap G$ and $x_i \in V_x$ but $g_i \po x_i \notin V$ hence $g_i \po V_x \sups X \sm V$, so that $h \in g_i \po V_x$ and $g_i^{-1} \po h \in V_x$ for $i$ large enough. This means that $g_i^{-1} \po h \tend i x$.

\medskip

 The statement concerning the limit set is not hard. Any $g \in G$ has a compact neighborhood $K$ in $G$ hence in $X$ and the set $\{ h \in G ~|~ h \po K \cap K \neq \vi \}$ is exactly $K \po K^{-1}$ which is compact, so that the ordinary set contains $G$. Conversely, as $G$ is dense in $X$ for any $\la \in \La$ there exists a net $(g_n)_n$ in $G$ such that $g_n \tend{n} \la$, up to passing to a subnet we can assume that there exists $\mu \in X$ such that $g_n^{-1} \tend{n} \mu$, and as above we have $\mu \in \La$ and $\clu{g_n}{n}{X}{\mu}{\la}$. We deduce that the limit set contains $\La$ and the stated result.
\end{proof}

 Note that B. Bowditch gives in \cite{BoGCSESG} a simple proof that the action of a finitely generated group on its space of ends is convergence, this proof can be extended without difficulty to our situation. We easily deduce that the action of $G$ on any Specker compactification is convergence, but is seems hard to prove directly that such an action is minimal. For instance, there exists locally compact groups having infinitely many ends, one of them being fixed by the whole group (see \cite{AbPF}).

\medskip

 Let us now prove Corallary E. We first show a lemma establishing the existence of a \enquote{minimal Specker pullback}:

\begin{lem}\label{Spb}
 Let $X$ and $Y$ be two quasi-Specker compactifications of $G$. Let $\Sr (X,Y)$ be the closure in $X \ti Y$ of the diagonal $\De (G) = \{ (g,g) ~|~ g \in G \}$, we write $\pi_1: X \ti Y \ra X$ (resp. $\pi_2: X \ti Y \ra Y$) the canonical projection. We have the following:
  \begin{itemize}
   \item The space $\Sr (X,Y)$ is a quasi-Specker compactification of $G$ (for the natural $\hat L$ action).
   \item For every quasi-Specker compactification $Z$ of $G$ endowed with two $G$-equivariant continuous maps $\pi_X: Z \ra X$ and $\pi_Y: Z \ra Y$ extending the identity on $G$ there exists a unique surjective $G$-equivariant continuous map $\phi: Z \ra \Sr (X,Y)$ extending the identity on $G$ and such that $\pi_1 \ci \phi = \pi_X$ and $\pi_2 \ci \phi = \pi_Y$.
   \item When $X$ and $Y$ are \enquote{complete} Specker compactifications of $G$, so is $\Sr (X,Y)$.
  \end{itemize}
\end{lem}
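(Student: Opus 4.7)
My plan is to work throughout inside the product compactum $X \ti Y$, using that the component-wise actions $(g, (x,y)) \ma (\hat R(g,x), \hat R(g,y))$ and $(x,y) \ma (\hat L_g^X(x), \hat L_g^Y(y))$ are continuous on $X \ti Y$ and evidently send the diagonal $\De(G)$ into itself. Since $\Sr(X,Y) = \overline{\De(G)}$ by definition, continuity will force both actions to preserve $\Sr(X,Y)$, so all three parts of the lemma reduce to verifying a few topological facts about $\Sr(X,Y)$ sitting inside $X \ti Y$.

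The key preliminary step, which I expect to be the main obstacle, is to show that $\Sr(X,Y) \sm \De(G)$ is contained in $(X \sm G) \ti (Y \sm G)$, i.e., no \enquote{mixed} limit points arise. If a net $((g_n,g_n))_n$ on $\De(G)$ converges to $(x,y)$ with $x \in G$, then because $G$ is open in $X$ the net $(g_n)_n$ eventually lies in $G$ and converges to $x$ inside $G$, hence also converges to $x$ inside $Y$, forcing $y = x$. From this I deduce that $\Sr(X,Y) \cap (G \ti G) = \De(G)$, so $G$ (identified with $\De(G)$ via either projection) is an open dense subset of the compactum $\Sr(X,Y)$. On any non-diagonal point of $\Sr(X,Y)$ both coordinates then lie in the respective boundaries of $X$ and $Y$, where $\hat R$ acts as the identity; this reconciles the piecewise formula defining $\hat R$ on $\Sr(X,Y)$ with the restriction of the continuous product action. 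Each $\hat L_g$ is handled identically: $\hat L_g^X \ti \hat L_g^Y$ is a homeomorphism of $X \ti Y$ sending $\De(G)$ onto itself, so it restricts to a homeomorphism of $\Sr(X,Y)$ extending $L_g$ on $G$. Together these give the first point.

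For the universal property, I would take $\phi(z) = (\pi_X(z), \pi_Y(z))$, which is continuous, $G$-equivariant, and equal to $g \ma (g,g)$ on $G$; thus $\phi(Z) \subs \overline{\De(G)} = \Sr(X,Y)$, and surjectivity follows from compactness of $Z$ (the image is closed and contains the dense subset $\De(G)$). The commutations $\pi_1 \ci \phi = \pi_X$ and $\pi_2 \ci \phi = \pi_Y$ are built into the construction, while uniqueness is the standard density-Hausdorffness argument, since any competing $\phi'$ would agree with $\phi$ on the dense subset $G$ of $Z$ and land in the Hausdorff space $\Sr(X,Y)$. Finally, when $X$ and $Y$ are Specker, the compacta $X \sm G$ and $Y \sm G$ are totally disconnected, hence so is their product (any connected subset projects to singletons in each factor), and the inclusion $\Sr(X,Y) \sm G \subs (X \sm G) \ti (Y \sm G)$ established in the preliminary step transfers this property to the boundary of $\Sr(X,Y)$, giving the Specker condition.
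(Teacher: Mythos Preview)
Your proof is correct and follows essentially the same approach as the paper's: both restrict the component-wise product actions on $X \ti Y$ to the closure $\Sr(X,Y)$ of the diagonal, use the inclusion $\Sr(X,Y) \sm \De(G) \subs (X \sm G) \ti (Y \sm G)$ for both the openness of $G$ and the total-disconnectedness statement, and take $\phi = (\pi_X,\pi_Y)$ for the universal property. You are simply more explicit than the paper about the net argument establishing that boundary inclusion (which the paper records as ``easy to see'') and about why the restricted product $\hat R$ agrees with the required piecewise formula on $\Sr(X,Y)$.
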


\begin{proof}
 Note that $\De (G)$ is naturally homeomorphic to $G$, we identify then $g \in G$ and $(g,g) \in \De (G)$.

 By construction $\Sr (X,Y)$ is a compactum and contains $G$ as a dense subset, we have the points 1 and 2 in Definitions \ref{dSc}.

 Let us write $\hat R_X$ and $\hat L_X$ (resp. $\hat R_Y$ and $\hat L_Y$) the extensions to $G \ti X$ (resp. to $G \ti Y$) of the right and left translations of $G$. By definition of the product topology, the maps $$\fonct{\hat R}{G \ti (X \ti Y)}{X \ti Y}{(g,(x,y))}{(\hat R_X (g,x),\hat R_Y (g,y))} \et \fonct{\hat L}{G \ti (X \ti Y)}{X \ti Y}{(g,(x,y))}{(\hat L_X (g,x),\hat L_Y (g,y))}$$ are continuous. Using the fact that $G$ is dense in $\Sr(X,Y)$ it is an exercise to show that these two maps let $\Sr (X,Y)$ invariant.

 We deduce the points 4 and 5 of Definitions \ref{dSc}: $\Sr (X,Y)$ is a quasi-Specker compactification of $G$.

 It is easy to see that $\Sr (X,Y) \sm G \subs (X \sm G) \ti (Y \sm G)$. As the connected components of a product are the product of the components and as the components of a subspace are contained in the components of the space, we deduce the last point of the lemma.

 Let now $Z$ be a quasi-Specker compactification of $G$ endowed with $G$-equivariant continuous maps $\pi_X: Z \ra X$ and $\pi_Y: Z \ra Y$ extending the identity on $G$. The map $\fonct{\phi}{Z}{\Sr(X,Y)}{t}{(\pi_X (t),\pi_Y (t))}$ is the only map $Z \ra \Sr (X,Y)$ satisfying $\pi_X = \pi_1 \ci \phi$ and $\pi_Y = \pi_2 \ci \phi$. By construction the map $\phi$ is continuous, $G$-equivariant and extend the identity on $G$, one can also show that it is surjective.
\end{proof}

 Recall that a \gr{pullback} of two minimal convergence actions of $G$ on compacta $M$ and $N$ is a triple $(P,\pi_M,\pi_N)$ where $P$ is a compactum on which $G$ acts with minimally convergence property and $\pi_M: P \ra M$ (resp. $\pi_N: P \ra N$) is a $G$-equivariant surjective continuous map. The pullback $(P,\pi_M,\pi_N)$ is \gr{minimal} if for any other pullback $(T,p_M,p_N)$ there exists a $G$-equivariant continuous map $\phi: T \ra P$ such that $\pi_M \ci \phi = p_M$ and $\pi_N \ci \phi = p_N$.

 We can now prove Corollary E:

\begin{thm}\label{thmD}
 Any two minimal convergence actions of a locally compact compactly generated topological group on totally disconnected compacta containing more than two points admit a minimal pullback.
\end{thm}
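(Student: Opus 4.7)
The plan is to realise $T$ as (the limit set of) the Specker pullback of the attractor-sum Specker compactifications associated to the two given actions, then to exploit the universal properties already established. Since $M$ and $N$ are totally disconnected limit sets of minimal convergence actions with at least three points, Corollary \ref{corthmB} provides complete Specker compactifications $X_M := G \sqcup M$ and $X_N := G \sqcup N$ of $G$. By Lemma \ref{Spb}, the subspace $Y := \Sr(X_M, X_N) \subs X_M \ti X_N$ (the closure of the diagonal $\De(G)$) is itself a Specker compactification of $G$, equipped with the canonical continuous $G$-equivariant projections $p_M : Y \ra X_M$ and $p_N : Y \ra X_N$ extending the identity on $G$. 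Since $G$ is compactly generated, Theorem \ref{thmC+} applies to $Y$: the $\hat L$-action of $G$ on $Y$ is convergence with limit set $T := Y \sm G$, so $G$ acts on $T$ minimally with convergence property. The restrictions $\pi_M := p_M|_T : T \ra M$ and $\pi_N := p_N|_T : T \ra N$ are continuous $G$-equivariant maps, and they are surjective by a density argument: for $m \in M$ choose a wandering net $(g_i)_i$ in $G$ converging to $m$ in $X_M$, extract a subnet converging to some $n \in X_N$ (necessarily in $N$ since the net is wandering in $G$), and observe that the resulting limit point $(m, n) \in \ad{\De(G)} \cap (M \ti N) = T$ projects to $m$. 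Thus $(T, \pi_M, \pi_N)$ is a pullback.

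For minimality, fix any other pullback $(T', \pi'_M, \pi'_N)$. By Proposition \ref{limitset} the restriction of the action to $\La_{T'}$ is itself convergence and minimal, and carries the same data $(\pi'_M, \pi'_N)$, so I may reduce to $T' = \La_{T'}$. Since $\pi'_M$ and $\pi'_N$ are surjective $G$-morphisms onto compacta of cardinality at least three, both maps are non-constant and $|T'| \ge 3$; Lemma \ref{cGeAS} then extends them by the identity on $G$ to continuous $G$-equivariant maps $\tilde \pi'_M : G \sqcup T' \ra X_M$ and $\tilde \pi'_N : G \sqcup T' \ra X_N$. By Theorem \ref{thmB} the attractor sum $G \sqcup T'$ is a quasi-Specker compactification of $G$, so the universal property in Lemma \ref{Spb} furnishes a unique continuous $G$-equivariant map $\psi : G \sqcup T' \ra Y$, identity on $G$, with $p_M \ci \psi = \tilde \pi'_M$ and $p_N \ci \psi = \tilde \pi'_N$. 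Since $\psi$ is the identity on the open subset $G$ and $G$ is open in $Y$, a wandering-net argument forces $\psi$ to send $T'$ into $T$; the restriction $\phi := \psi|_{T'} : T' \ra T$ is then the required $G$-morphism, automatically satisfying $\pi_M \ci \phi = \pi'_M$ and $\pi_N \ci \phi = \pi'_N$.

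The main technical hurdle is the reduction $T' \leadsto \La_{T'}$ (needed so that attractor-sum techniques apply directly and so that the target of each projection has at least three points) together with the verification that the hypotheses of Lemma \ref{cGeAS} hold for $\pi'_M$ and $\pi'_N$; once these are in place the statement is forced by chaining together the universal properties of the complete Specker compactification (Corollary \ref{corthmB}), the Specker pullback (Lemma \ref{Spb}), and the upgrade from quasi-Specker to convergence provided by compact generation (Theorem \ref{thmC+}).
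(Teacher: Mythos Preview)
Your argument is correct and follows the same architecture as the paper's: realise the candidate pullback as $T = \Sr(X_M,X_N)\sm G$ via Lemma~\ref{Spb}, apply Theorem~\ref{thmC+} for convergence and minimality, and check universality by passing from a competing pullback $T'$ to its attractor sum, extending the two projections with Lemma~\ref{cGeAS}, and factoring through the universal property of $\Sr$. The one genuine difference lies in how the competing pullback $T'$ is treated. The paper first replaces $T'$ by its totally disconnected quotient (Lemma~\ref{qcc}) so that Corollary~\ref{corthmB} produces a \emph{complete} Specker compactification $G\sqcup T'$; you instead observe that the universal property in Lemma~\ref{Spb} only requires the source $Z$ to be \emph{quasi}-Specker, so Theorem~\ref{thmB} applies directly to $G\sqcup T'$ without any connectedness hypothesis on $T'$. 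Your route is a little shorter and shows that the detour through the component quotient is not needed for this particular argument; the paper's version, on the other hand, makes visible the general principle that every pullback may be replaced by a totally disconnected one.
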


\begin{proof}
 Assume that $G$ is compactly generated and acts minimally with convergence property on two totally disconnected compacta $M$ and $N$ containing more than two points. We first construct a pullback and then show that it is minimal.

 By Corollary \ref{corthmB} the attractor sums $X = G \sqcup M$ and $Y = G \sqcup N$ are Specker compactifications of $G$. We consider the Specker compactification $\Sr (X, Y)$ of $G$ and the projections $\pi_1: \Sr (X,Y) \ra X$ and $\pi_2: \Sr (X,Y) \ra Y$ given by Lemma \ref{Spb}. We set $P = \Sr (X,Y) \sm G$ and $\pi_M = (\pi_1)_{|P}$ (resp. $\pi_N = (\pi_2)_{|P}$), it is a $G$-equivariant continuous map $P \ra M$ (resp. $P \ra N$). By Corollary \ref{thmC} the action of $G$ on $P$ is convergence and minimal so that $(P,\pi_M,\pi_N)$ is a pullback of the actions of $G$ on $M$ and $N$.

 Let now $(T,p_M,p_N)$ be an other pullback of these actions. We first consider the totally disconnected quotient space $T'$ of $T$ by the relation \enquote{belong to the same component}, the natural action of $G$ on $T'$ is convergence and minimal (see Lemma \ref{qcc}, we have here $T = \La$).

 For every connected component $C$ of $T$ the image $p_M (C)$ is a connected subset of $M$ (the map $p_M$ is continuous) hence a singleton ($M$ is totally disconnected). There exists then a map $p'_M : T' \ra M$ satisfying $p'_M \ci \pi = p_M$ (where $\pi : T \ra T'$ is the canonical projection). 

 The construction of $p'_M$ and the $G$-equivariance of $p_M$ and $\pi$ induce that $p'_M$ is also $G$-equivariant, it is not hard to see that it is continuous. Applying the same arguments with $N$ we get a $G$-equivariant surjective continuous map $p'_N : T' \ra N$ such that $p'_N \ci \pi = p_N$.

 It follows that $(T', p'_M, p'_N)$ is a pullback of the actions of $G$ on $M$ and $N$. Note that $|T'| \geq 3$ (the action of $G$ on $M$ is minimal, $|M| \geq 3$ and $p'_M$ is $G$-equivariant) and that $p'_m$ is not constant (it is surjective).

  Let us then consider the attractor sum $Z = G \sqcup T'$, by Corollary \ref{corthmB} again this is a Specker compactification of $G$. Let $p'_X: Z \ra X$ (resp. $p'_Y: Z \ra Y$) be the extension of $p'_M$ (resp. of $p'_N$) on $Z$ by the identity of $G$, this map it continuous, surjective and $G$-equivariant (see Lemma \ref{cGeAS}). By the second point of Lemma \ref{Spb} there exists then a surjective $G$-equivariant continuous map $\psi : Z \ra \Sr (X,Y)$ extending the identity on $G$ and satisfying $\pi_1 \ci \psi = p'_X$ and $\pi_2 \ci \psi = p'_Y$.

 We have then $\psi (T') = P$ so that the map $\psi ~ \ci ~ \pi: T \ra P$ is $G$-equivariant, surjective and continuous. We have the stated result.
\end{proof}

 We end the section with considerations about the hypothesis in Theorem \ref{thmC+}. The following two facts are known:

\begin{itemize}
  \item there exists two minimal convergence actions of the discrete free group of countable rank on two totally disconnected compacta admitting no pullback space (\cite[Prop 5.5]{GeLPSRHAG}),
  \item there exists two minimal convergence actions of the discrete free group of rank $3$ on two compacta (one of them not being totally disconnected) admitting no pullback space (\cite[Prop 5.2]{GeLPSRHAG}).
\end{itemize}

 The last result follows directly from an example due to O. Baker and T. Riley in \cite{BaRiCTMDNAE}: the authors exhibit a hyperbolic group $G$ and a free subgroup $H < G$ of rank $3$ such that the inclusion do not induce a Cannon-Thurston map $\d H \ra \d G$. Note that in \cite{GeLPSRHAG} the authors study 2-cocompact actions (i.e. geometrically finite), but we do not need this property in our paper.

 This two results show that both hypothesis \enquote{the group is compactly generated} and \enquote{the Specker boundary is totally disconnected} cannot be removed from the statement of Theorem \ref{thmC+}.

\section{The ends of a compactly generated group}\label{s8}

 The aim of this section is to give an other interpretation of the space of ends of a compactly generated group. This construction is inspired by a remark of V. Gerasimov (in the context of a finitely generated discrete group) \cite[4.1]{GeFMRHG}.

 We first recall some results about uniform spaces. Recall that a filter on a set $X$ is a non-empty family $\F$ of subsets of $X$ stable by finite intersection ($\fa A,B \in \F, ~A \cap B \in \F$) and by \enquote{growth} (if $A \in \F$ and if $B \subs X$ satisfies $A \subs B$ then $B \in \F$).

\begin{defi}
 A \gr{uniformity} $\V$ on a set $X$ is a filter of $X \ti X$ such that:
   \begin{itemize}
    \item for all $V \in \V$ the set $V^{-1} = \{ (y,x) ~|~ (x,y) \in V \}$ belongs to $\V$,
    \item every $V \in \V$ contains the \gr{diagonal} $\De = \{ (x,x) ~|~ x \in X \}$,
    \item for all $V \in \V$ there exists $U \in \V$ such that $V$ contains the set $$U \ci U = \{ (x,z) \in X \ti X ~|~ \ex y \in X, ~ (x,y),(y,z) \in U \}.$$
   \end{itemize}
  The uniformity $\V$ is \gr{exact} if $\bigcap \V = \De$, i.e. $\fa x \neq y \in X, \ex V \in \V, ~(x,y) \notin \V$.
\end{defi}

\begin{dfpp}
 Let $\V$ be a uniformity on $X$.
 \begin{itemize}
  \item For $x \in X$ and $V \in \V$ let $V[x] = \{ y \in X ~|~ (x,y) \in V \}$. There exists a unique topology on $X$ (the \gr{uniform topology}) such that the family $(V[x])_{V \in \V}$ is a local basis at every $x \in X$. This topology is Hausdorff if and only if $\V$ is exact.
  \item For $V \in \V$ we say that a subset $A \subs X$ is \gr{$V$-small} if $A \ti A \subs V$ and we write $\Sm (V)$ the set of all $V$-small subsets of $X$.
  \item A \gr{Cauchy filter} on $X$ is a filter on $X$ containing $V$-small sets for every $V \in \V$. A Cauchy filter is \gr{minimal} if it is minimal for the inclusion, we write $\tilde X$ the set of all minimal Cauchy filters on $X$. The filters $\Ne_X (x)$ for $x \in X$ are minimal Cauchy filters, they are called \gr{convergent}.
  \item For $V \in \V$ we set $\tilde V = \{ (\F,\G) \in \tilde X \ti \tilde X ~|~ \F \cap \G \cap \Sm(V) \neq \vi \}$, the family $\{ \tilde V ~|~ V \in \V \}$ generates a uniformity on $\tilde X$. This uniform space is called the \gr{completion} of $X$ by the uniformity $\V$. When $\V$ is exact the map $x \ma \Ne_X (x)$ is a homeomorphism of $X$ on a dense open subset of $\tilde X$ (both spaces endowed with the uniform topology) and we can identify any $x \in X$ with $\Ne_X (x) \in \tilde X$.
  \end{itemize}
\end{dfpp}

 In the whole section we fix a locally compact compactly generated topological group $G$ and we use the notations and results of the previous section. The Cayley graph $\Ga$ of $G$ is seen as a subspace of $G \ti G$. Recall that $\B$ denotes the set of all bounded subsets of $G$ and let $\B_\Ga$ be the set of all bounded subsets of $\Ga$ for this topology.

\medskip

 Let us fix a subset $M \subs \Ga$. A \gr{$M$-path} is a sequence $g_0, \dots, g_n$ (where $n \in \N$) such that $(g_i,g_{i+1}) \in M$ for all $i$. A subset $A \subs G$ is \gr{$M$-connected} if any two points in $A$ can be connected with a $M$-path. The \gr{$M$-components} of $G$ are the maximal connected subsets of $G$, we write $\pi_0 (M)$ the set of all the $M$-components.

 We can now define a \enquote{visibility} on $G$.

\begin{dfpp}
 ~\newline
 \indent For $M \in \B_\Ga$ let $v_M = \{ (g,h) \in G \ti G ~|~ \text{there exists a } (\Ga \sm M)\text{-path connecting } g \et h \}$. The family $\V = \{ v \subs G \ti G ~|~ \ex M \in \B_\Ga, ~v_M \subs V \}$ is an exact uniformity on $G$.
\end{dfpp}

\begin{proof}
 For all $M \in \B_\Ga$ we have $v_M = \bigcup_{C \in \pi_0 (\Ga \sm M)} C \ti C$ so that $\De \subs v_M = v_M^{-1} = v_M \ci v_M$, moreover for all $M,N \in \B_\Ga$ we have $v_M \cap v_N \sups v_{M \cup N}$. We deduce that $\V$ is a uniformity on $G$.

 This also shows that a subset $A \subs G$ is $v_M$-small if and only if there exists some $C \in \pi_0 (\Ga \sm M)$ such that $A \subs C$.

\medskip

 For all $g \in G$ the set $F(g) = \{ (h_1,h_2) \in \Ga ~|~ h_1 = g \text{ or } h_2 = g \}$ is bounded and the component of $\Ga \sm F(g)$ to which $g$ belongs is exactly $\{ g \}$. We deduce that $\V$ is exact.
\end{proof}

 Note that the topology induced by $\V$ on $G$ is the discrete topology.

 For $M \in \B_\Ga$ let $u_M$ be the set of all $(g,h) \in G \ti G$ such that every shortest $\Ga$-path connecting $g$ and $h$ avoids $M$. As $G$ is $\Ga$-connected we have $u_M \subs v_M$ for all $M \in \B_\Ga$. This is the definition of a visibility on $\Ga$, see \cite{GeFMRHG}.

\medskip

The main result of this section is the following theorem (see Propositions \ref{class} and \ref{ends} for the definitions):

\begin{thm}\label{thmE}
 For every non-convergent minimal Cauchy filter $\F$ on $G$ the family $\F \cap \U^G$ is an unbounded ultrafilter on $\U^G$, i.e. a point of the set of ends $E(G)$ of $G$.

 The map $\fonct{f}{\tilde G \sm G}{E(G)}{\F}{f(\F) = \F \cap \U^G}$ is a homeomorphism.
\end{thm}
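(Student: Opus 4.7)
The plan is to exploit the combinatorial structure of the components of $\Ga \sm M$ for $M \in \B_\Ga$. The technical cornerstone is the following structural lemma: for every $M \in \B_\Ga$, each component $C$ of $\Ga \sm M$ satisfies $|\d_\Ga C| \subs |M|$ and hence $C \in \U_\Ga = \U^G$ (by Proposition \ref{UGaUG}); moreover only finitely many components of $\Ga \sm M$ are unbounded, and the union $B_M$ of the bounded components is itself bounded. The first assertion is immediate from the maximality of components (any edge leaving $C$ lies in $M$); the finiteness and boundedness statements encode the properness of the Cayley graph of a compactly generated locally compact group, and they constitute the main obstacle of the proof.

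Granting this lemma, the next step is to show that $f$ is a well-defined bijection. For $\F$ a non-convergent minimal Cauchy filter, $\F \cap \U^G$ inherits the filter axioms; it is an ultrafilter because, for $A \in \U^G$, taking $M = \d_\Ga A$, any $v_M$-small element of $\F$ lies in one component of $\Ga \sm M$, which is contained in $A$ or in $G \sm A$ (crossing edges belong to $M$). For unboundedness, if $B \in \F$ were bounded, setting $M = \{(x,y) \in \Ga : x \in B \text{ or } y \in B\}$ makes each $b \in B$ an isolated vertex of $\Ga \sm M$, so any $v_M$-small $A_0 \in \F$ must meet $B$ by the filter property, forcing $A_0 = \{b\} \in \F$ and convergence of $\F$, a contradiction. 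For surjectivity, given an unbounded ultrafilter $\U$, the lemma decomposes $G$ for each $M$ as $C_1 \sqcup \cdots \sqcup C_n \sqcup B_M$ with $B_M \notin \U$, and the ultrafilter property on this finite partition selects a unique $C_M \in \U$; the family $\{C_M\}_M$ is a filter base (since $C_{M_1 \cup M_2} \subs C_{M_1} \cap C_{M_2}$), and the generated filter $\F$ is Cauchy ($C_M$ being $v_M$-small), non-convergent ($C_M$ being unbounded, hence never a singleton), minimal, and satisfies $f(\F) = \U$. Injectivity follows since the $C_M$ are determined by $\U$ alone.

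Lastly, continuity of $f$ and $f^{-1}$ reduces to matching basic neighborhoods. For continuity of $f$ at $\F$: given a basic neighborhood $A_+ \cap E(G)$ of $f(\F)$ with $A \in \U^G \cap \O$ and $A \in f(\F)$, take $M = \d_\Ga A$ so that $C_M \subs A$; any $\F' \in \tilde v_M[\F]$ shares a $v_M$-small set with $\F$, necessarily contained in $C_M \subs A$, so $f(\F') \in A_+$. For continuity of $f^{-1}$ at $\U = f(\F)$: given $\tilde v_M[\F]$, set $A = \int C_M$; the inclusion $C_M \sm |M| \subs \int C_M \subs C_M$ shows that $A$ and $C_M$ differ only by a bounded set, so $A \in \U^G \cap \U$, while $A$ is open by construction. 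Then any $\F'$ with $f(\F') \in A_+$ contains $A \subs C_M$, and hence shares $C_M \in \Sm(v_M)$ with $\F$ and lies in $\tilde v_M[\F]$. Modulo the structural lemma, this last step is a routine unpacking of the definitions.
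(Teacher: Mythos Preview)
Your approach is essentially the same as the paper's: both hinge on the structural lemma about the components of $\Ga \sm M$ (finitely many are unbounded, and the bounded ones have bounded union), construct the inverse map via the unique unbounded component $C_M$ selected by the ultrafilter, and verify continuity by matching the basic neighborhoods $A_+$ and $\tilde v_M[\F]$ using $A = \int C_M$. The paper proves the structural lemma in detail (via a compact ``annulus'' $L = K^{\le 1} \sm \int K$, showing each component meets $p(M)^{\le 1}$ and that only finitely many meet $L$), whereas you state it and correctly flag it as the main obstacle; apart from this and some terseness in justifying minimality of the constructed Cauchy filter, the arguments are parallel.
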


\begin{proof}
 Let $\F$ be a non-convergent Cauchy filter on $G$, we first show that $f(\F) \in E(G)$. The points (F0), (F1) and (F2) of the proof of Proposition \ref{class} are immediate.

 Assume that $\F$ contains a bounded subset $B \in \B$. The set $F(B) = \{ (h_1,h_2) \in \Ga ~|~ h_1 \in B \text{ or } h_2 \in B \}$ is bounded and $\F$ is Cauchy so that there exists $A \in \Sm (v_{F(B)}) \cap \F$. We have then $A \cap B \in \F$, but $F(B)$ contains all the edges with one end in $B$ so that necessarily $A \cap B$ is a singleton and $\F$ is convergent. We deduce the point (NB).

 In order to show the point $(U)$ we need to prove the following fact: every $A \subs G$ such that $\d_\Ga A \in \B_\Ga$ is a union of $(\Ga \sm \d_\Ga A)$-components (and so is $G \sm A$). Indeed, let $a \in A$ and $b \in G$ be in the same $(\Ga \sm \d_\Ga A)$-component. There exists then a $(\Ga \sm \d_\Ga A)$-path connecting $a$ and $b$, and if some point on this path is not in $A$ then this path has an edge in $\d_\Ga A$, a contradiction.

 For all $A \subs G$ such that $\d_\Ga A \in \B_\Ga$, as every $P \in \Sm(v_{\d_\Ga A})$ is contained in a $(\Ga \sm \d_\Ga A)$-component every such $P$ is a subset of $A$ or of $G \sm A$.

 Let then $A \in \U^G$, recall that $\d_\Ga A \in \B_\Ga$ (see Proposition \ref{UGaUG}). As $\F$ is Cauchy it contains a $(v_{\d_\Ga A})$-small set $P$, we have then $P \subs A$ or $P \subs G \sm A$ so that $A \in \F$ or $G \sm A \in \F$. We deduce the point $(U)$.

\medskip

 The following lemma, inspired by \cite[I,p.117,ex.19]{BouTG}, enables us to give the inverse map of $f$.

\begin{lem}
 Let us fix $M \in \B_\Ga \sm \{ \vi, G \}$, we write $p(M) = \{ g \in G ~|~ \ex h \in G, ~(g,h) \in M \text{ or } (h,g) \in M \}$ (we have $\vi \neq p(M) \in \B$). Let $K$ be a compact subset of $G$ such that $p(M)^{\le 1} \subs \int K$ (for instance $K = \{ 1 \}^{\le n}$ for some $n \in \N$ big enough, see Proposition \ref{UGaUG}) and $L = K^{\le 1} \sm \int K$, this is a compact subset.
 \begin{enumerate}
  \item For all $C \in \pi_0 (\Ga \sm M)$ we have $C \cap p(M)^{\le 1} \neq \vi$.
  \item For all $C \in \pi_0 (\Ga \sm M)$ such that $C \cap L = \vi$ we have $C \subs \int K$.
  \item The set $\{ C \in \pi_0 (\Ga \sm M) ~|~ C \cap L \neq \vi \}$ is finite.
  \item For all $\F \in E(G)$, there exists a unique $\C(M,\F) \in \pi_0 (\Ga \sm M) \cap \F$, moreover $\C(M,\F) \notin \B$.
 \end{enumerate}
\end{lem}

\begin{proof}[Proof of the lemma]
 Let $C \in \pi_0 (\Ga \sm M)$ (then $C \neq \vi$) and assume $C \nsubs p(M)^{\le 1}$. Pick any $c \in C \sm p(M)^{\le 1}$ and any $g \in p(M)$, and let $c = g_0, \dots, g_n = g$ be a $\Ga$-path. The set $\{ i \in \se{1,n} ~|~ g_i \in p(M) \}$ is not empty and has a smallest element $i > 0$, and as $g_0, \dots, g_{i-1} \notin p(M)$ we have $(g_0,g_1), \dots, (g_{i-2},g_{i-1}) \notin M$ so that $g_{i-1} \in C$ (the path from $c = g_0$ to $g_{i-1}$ stays in the same $(\Ga \sm M)$-component, namely $C$). As $g_i \in p(M)$ we have $g_{i-1} \in C \cap p(M)^{\le 1}$, we deduce the first point.

 Assume that $C \in \pi_0 (\Ga \sm M)$ satisfies $C \cap L = \vi$. Fix $c_0 \in C \cap p(M)^{\le 1}$ (then $c_0 \in C \cap \int K$) and let $c$ be any point in $C$. By definition of a $(\Ga \sm M)$-component there exists a $\Ga$-path $c_0, \dots, c_n = c$ in $C$. Assume that this path intersects $G \sm \int K$ and let $i > 0$ be the smallest index such that $c_i \notin \int K$, we have then $c_i \in C \cap (\int K^{\le 1} \sm \int K) \subs C \cap L = \vi$, a contradiction. As a consequence we have $c \in \int K$, we deduce the second point.

 Let $C \in \pi_0 (\Ga \sm M)$ be such that $C \cap L \neq \vi$, we first show that $C \cap L$ is open in $L$. Let $c \in C \cap L$.  If $g \in L$ is such that $(c,g) \in \Ga$, as $c \notin p(M)$ we have $(c,g) \notin M$ so that $g \in C$. The set $c \po F$ is an open neighborhood of $c$ in $G$ so that $(c \po F) \cap L$ is a neighborhood of $c$ in $L$, contained in $C \cap L$.

 As $L$ is compact, the open covering $L = \bigcup_{C \in \pi_0 (\Ga \sm M)} (C \cap L)$ has a finite subcovering, and as different components are disjoints, we have the third point.

 Let now $\F \in E(G)$. Every $C \in \pi_0 (\Ga \sm M)$ such that $C \cap L = \vi$ is contained in $K$ by the third point, so that $X$ is covered by the finite union $K \cup \bigcup \{ C \in \pi_0 (\Ga \sm M) ~|~ C \cap L \neq \vi \}$. Note that every $C \in \pi_0 (\Ga \sm M)$ satisfy $\d_\Ga C \subs M$ so that $C \in \U^G$, as $X \in \F$ the properties (U) and (NB) give the fourth point.
\end{proof}

 The last point of the lemma is in fact also true for any non-convergent Cauchy filter $\F$ on $G$, indeed $\F$ must contain a $v_M$-small set, hence a (unbounded) $(\Ga \sm M)$-component, for every $M \in \B_\Ga$. Note then that $\C(M,f(\F)) = \C(M,\F)$ for all $M \in \B_\Ga$.

\medskip

 Let $\F \in E(G)$, we define $g(\F) = \{ A \subs G ~|~ \ex M \in \B_\Ga, ~\C(M,\F) \subs A \}$. For all $M,N \in \B_\Ga$ we have $\C(M,\F) \cap \C(N,\F) \subs \C(M \cup N, \F)$, we easily deduce that $g(\F)$ is a filter on $G$. As the $\C(M,\F)$ are $v_M$-small and unbounded this filter is Cauchy and non-convergent.

 Let $\F_0$ be the unique minimal Cauchy filter contained in $g(\F)$ (see \cite[II]{BouTG}), we have $\F_0 \subs g(\F)$. As different component are disjoint we necessarily have $\C( M, \F_0) = \C(M, \F)$ so that $g(\F) \subs \F_0$ and the equality. We deduce that $g(\F) \in \tilde G$.

 This shows in fact that for every Cauchy filter $\A$ on $G$, the family $(\C(M,\A))_{M \in \B_\Ga}$ is a basis for the unique minimal Cauchy filter contained in $\A$.

 Note that for every $\F \in E(G)$ and every $A \in \U^G$, we have $A \in \F$ if and only if $\C(\d_\Ga A, \F) \subs A$ (recall that either $\d_\Ga A \subs A$ or $\d_\Ga A \subs G \sm A$ and that either $A \in \F$ or $G \sm A \in \F$). The previous construction \enquote{preserves} the family $(\C(M,\po))_{M \in \B_\Ga}$, we deduce that for all $A \in \U^G$ we have $A \in \F$ if and only if $A \in f(g(\F))$. This shows that $\F = f(g(\F))$.

 Similarly, let $\F$ be a minimal Cauchy filter on $G$, then $\F$ and $g(f(\F))$ are minimal Cauchy filters on $G$ with the \enquote{same family $(\C(M,\po))_{M \in \B_\Ga}$}, they are then equal.

 We have shown that the map $f$ is a bijection and that $g$ is its inverse.

\medskip

 Let us show that $f$ is continuous: let $\F \in \tilde G \sm G$ and let $O$ be a neighborhood of $f(\F)$ in $E(G)$, we show that $f^{-1} (O)$ is a neighborhood of $\F$ in $\tilde G \sm G$. We can assume that $O = \{ \G \in E(G) ~|~ A \in \G \}$ for some open subset $A \in \U^G$, note then that $A \in f(\F) = \F \cap \U^G \subs \F$. Let us write $M = \d_\Ga A$ and $V = \tilde {v_M}[\F] \sm G = \{ \G \in \tilde G \sm G ~|~ (\F,\G) \in \tilde {v_M} \}$, it is a neighborhood of $\F$. For all $\G \in \tilde G \sm G$ we have $\G \in V \LR \F \cap \G \cap \Sm(v_M) \neq \vi \LR \C(M,\F) = \C(M,\G)$, and as $A \in \F$ we deduce that $\G \in V$ imply $A \in \G$ and $f(\G) \in O$, we have then $\F \in V \subs f^{-1}(O)$.

 Conversely, let $\F \in \tilde G \sm G$ and let $U$ be a neighborhood of $\F$ in $\tilde G \sm G$, we can assume that there exists $M \in \B_\Ga$ such that $U = \tilde {v_M} [\F] \sm G$. Let us write $C = \C(M,\F)$, we can show that $|\d_\Ga \int C| \subs |\d_\Ga C|^{\le 1}$ so that $\int C \in \U^G$, the set $O = \{ \G \in E(G) ~|~ \int C \in \G \}$ is then a neighborhood of $f(\F)$ in $E(G)$. As previously we show that for all $\G \in \tilde G \sm G$ such that $f(\G) \in O$ we have $\G \in U$, we conclude that $f(\F) \in O \subs f(U)$ and that $f$ is open.

 Finally, $f$ is a homeomorphism as stated. 
\end{proof}

 For any $M \subs N \in \B_\Ga$ let us consider the natural map $\io_{M,N}: \pi_0 (\Ga \sm N) \ra \pi_0 (\Ga \sm M)$ which sends a $(\Ga \sm N)$-component to the $(\Ga \sm M)$-component containing it, we have then an inverse system $(\{ \pi_0 (\Ga \sm M) ~|~ M \in \B_\Ga \}, \{ \io_{M,N} ~|~ M \subs N \in \B_\Ga \})$. It follows from the previous proof that the map $\F \ma (\C(M,\F))_{M \in \B_\Ga}$ is a bijection from the set $\tilde G \sm G$ of the minimal non-convergent Cauchy filters to the inverse limit of this inverse system.

 Theorem \ref{thmE} can then be rephrased in the following \enquote{unformal} way: the set of ends of a compactly generated group coincides with the sets of the components of the \enquote{ideal boundary} of its Cayley-graph.

\bibliographystyle{plain}
\bibliography{bibliographie}

\end{document}